\documentclass[letterpaper,11pt,reqno]{amsart}

\makeatletter
\usepackage{amssymb}
\usepackage{latexsym}
\usepackage{amsbsy}
\usepackage{amsfonts}
\usepackage{hyperref}
\usepackage{graphicx}
\usepackage{scalerel}
\usepackage{enumerate}
\usepackage{enumitem}
\usepackage{mathtools}
\usepackage{color}

\usepackage{tikz}

\def\marginpar#1{\ignorespaces}

\textheight=600pt \textwidth=440pt \oddsidemargin=10pt \evensidemargin=10pt \topmargin=14pt
\headheight=8pt
\parindent=0pt
\parskip=2pt

\DeclareMathOperator\erfc{erfc}

\def\Sup { S^{\uparrow }}
\def\Sdo { S^{\downarrow }}

\newtheorem{theorem}{Theorem}[section]
\newtheorem{lemma}[theorem]{Lemma}

\newtheorem{corollary}[theorem]{Corollary}

\numberwithin{equation}{section}
\makeatother
\begin{document}
\title[Order statistics of Laplace walk]{Hidden symmetries and limit laws in the extreme order statistics of the Laplace random walk}

\author[Jim Pitman]{{Jim} Pitman}
\address{Department of Statistics, University of California, Berkeley.  
} \email{pitman@stat.berkeley.edu}

\author[Wenpin Tang]{{Wenpin} Tang}
\address{Department of Industrial Engineering and Operations Research, Columbia University. 
} \email{wt2319@columbia.edu}

\date{\today} 
\begin{abstract}
This paper is concerned with the limit laws of the extreme order statistics derived from a symmetric Laplace walk.
We provide two different descriptions of the point process of the limiting extreme order statistics: 
a branching representation and a squared Bessel representation. 
These complementary descriptions expose various hidden symmetries in branching processes and Brownian motion which lie behind some striking formulas found by Schehr and Majumdar \cite{SM12}.
In particular, the Bessel process of dimension $4 = 2+2$ appears in the descriptions as a path decomposition of Brownian motion at a local minimum and the Ray-Knight description of Brownian local times near the minimum.
\end{abstract}
\maketitle
\textit{Key words :} Branching processes, Brownian embedding, Cox processes, excursion theory, fluctuation theory, limit theorems, order statistics, path decomposition, point processes, random walk, renewal cluster processes, squared Bessel processes.

\smallskip
\maketitle
\textit{AMS 2010 Mathematics Subject Classification:} 60G50, 60G55, 60J65, 60J80.

\section{Introduction}

\quad Extreme values of a collection of correlated random variables have found various applications
in probability theory, statistics and physics of disordered systems. 
Recently there has been growing interest in understanding not only the extreme of a collection of random variables, but also the second extreme, the third extreme, and so on.
This problem amounts to studying the extreme order statistics of a large number of correlated random variables.
Examples include the extreme order statistics of a random walk \cite{PT20, SM12}, 
the near-extreme structure of a branching random walk or Brownian motion \cite{ABBS, BD11},
and the local eigenvalue statistics of a random matrix near the spectral edge \cite{TV10, TW94}.

\quad The starting point of this paper is the works of Schehr and Majumdar \cite{SM12, SM14} concerning the extreme order statistics of random walk models in the context of statistical physics.
They started from particular models of random walk such as symmetric walks with continuous increment distributions, and made sustained calculations in these models.
Many of their calculations recover known results in the fluctuation theory of random walks. 
But some of their calculations have led to new limit laws and asymptotic formulas, whose place relative to the standard fluctuation theory is much less obvious. 

To describe some of these results in more detail, 
let $S = (S_k, \, 0 \le k \le n)$, with $S_0 = 0$, and $S_k = \sum_{i = 1}^k X_i$ with 
independent and identically distributed (i.i.d.) increments $X_1, \ldots, X_n$.
For $0 \le k \le n$, let $M_{k,n}:= M_k(\{S_0, \ldots, S_n\})$ be the $k^{th}$ order statistic derived from the steps $(S_0, \ldots, S_n)$ of the walk $S$. 
So
\begin{equation}
\label{eq:Mkn}
\{0 = S_0, S_1, \ldots, S_n \} = \{ M_{k,n}, \, 0 \le k \le n \} \mbox{ with } M_{0,n} \le M_{1,n} \le \cdots \le M_{n,n}.
\end{equation}
Schehr and Majumdar \cite{SM12} focused attention on the {\em spacings} or {\em gaps} between random walk order statistics
\begin{equation}
D_{k,n}: = M_{k,n} - M_{k-1,n} \quad \mbox{for } 1 \le k \le n.
\end{equation}
Schehr and Majumdar 
observed that if the distribution of 
$X$ has a symmetric density with $\mathbb{E}X^2 < \infty$, 
then for each fixed $k = 1,2,\ldots$, 
as $n \to \infty$ the expected spacing $\mathbb{E}D_{k,n}$ 
has a limit, for which they gave an integral expression involving the Fourier transform of the density of $X$.
This led to them to conjecture that the distribution of $D_{k,n}$ may approach that of some limit random variable $D_k$ as $n \to \infty$,
a result which was confirmed in greater generality in \cite{PT20}, as discussed in the next paragraph.
In the particular case of the symmetric Laplace walk whose increments have density
\begin{equation}
\label{eq:Laplace}
\frac{\mathbb{P}(X \in dx)}{dx}= \frac{1}{2} e^{-|x|}, \quad x \in (-\infty, \infty),
\end{equation}
Schehr and Majumdar showed that such a limit distribution of $D_k$ exists for each $k$, with a sequence of densities 
$p_k(v):= \mathbb{P}(D_k \in dv)/dv$ that they characterized by the following generating function \cite[(15)]{SM12}:
\begin{equation}
\label{eq:generatingSM}
\sum_{k =1}^{\infty} p_k(v) z^k = 8ze^{-2v} \frac{u_{+}(z) - u_{-}(z)e^{-2v}}{(u_{+}(z) + u_{-}(z)e^{-2v})^3},
\end{equation}
where $u_{\pm}(z):= \sqrt{1-z} \pm 1$.
As they remarked, extracting an explicit formula for $p_k$ from \eqref{eq:generatingSM} is difficult.
However, assuming that $\sqrt{k/2} D_k$ has a limit in distribution as $k \to \infty$, they derived the formula
$\int_0^{\infty} \sqrt{x} p(\sqrt{x})e^{-\lambda x} dx = (1 + \sqrt{\lambda/2} )^{-3}$, $\lambda >0$ 
for a Laplace transform related to the density $p(x)$ of this limit distribution at $x>0$,
which they inverted to obtain the formula \cite[(1)]{SM12}:
\begin{equation}
\label{eq:pdensity}
p(x) = 4 \left[ \sqrt{\frac{2}{\pi}}(1+2x^2) - x(4x^2+3) e^{-2x^2} \erfc(\sqrt{2x}) \right] \quad \mbox{for } x > 0,
\end{equation}
where $\erfc(x): = 2/\sqrt{\pi} \int_x^\infty e^{-t^2}dt$.
This limit density formula invites an interpretation in terms of Brownian motion, however, Schehr and Majumdar did not offer any explicit construction of a random variable with this density.

\quad In our previous work \cite{PT20}, we showed how known results in the fluctuation theory of random walks imply that for every distribution of increments, there is the convergence of finite-dimensional distributions of gaps between order statistics
$(D_{1,n}, D_{2,n}, \ldots) \stackrel{d}{\longrightarrow} (D_1, D_2, \ldots)$
for a limiting joint distribution of consecutive spacings $(D_k, \, k \ge 1)$, which may be constructed from the Feller chains 
$(\Sup_n, \, n \ge 0)$ and $(\Sdo_n, \, n \ge 0)$ generated by the random walk. The definition of the Feller chains is recalled in Section \ref{sc21}. 
To be more precise, let $W_{k,n}: = M_{k, n} - M_{0,n} = \sum_{j = 1}^k D_{j,n}$,
and let $0 = W_0 \le W_1 \le \cdots$ be the order statistics
\begin{equation}
\label{eq:Wkrep}
W_k:= M_k ( \{ - \Sdo_n, \, n \ge 0 \}  \cup \{\Sup_n, \, n \ge 1 \}),
\end{equation}
derived from the two Feller chains $\Sup$ and $\Sdo$.
Then
\begin{itemize}[itemsep = 3 pt]
\item
For each finite $K$,  there is the convergence in total variation of finite-dimensional distributions of order statistics
\begin{equation*}
(W_{k,n},  \,1 \le k \le K )  \stackrel{tv}{\longrightarrow}( W_k, \, 1 \le k \le K ) \quad \mbox{as } n \to \infty.
\end{equation*}
\item
For each fixed $w > 0$, there is the convergence in total variation of laws of counting processes
$N_{W,n}(v):= \sum_{k=1}^n 1( W_{k,n} \le v)$ and $N_W(v):=\sum_{k=1}^\infty 1(W_k \le v)$
\begin{equation*}
\left(N_{W,n}(v), \, 0 \le v \le w \right) \stackrel{tv}{\longrightarrow} \left(N_W(v),  \, 0 \le v \le w \right) \quad \mbox{as } n \rightarrow \infty.
\end{equation*}
\end{itemize}

\quad The purpose of this article is to expose the rich probabilistic structure of the symmetric Laplace walk underlying the striking formulas \eqref{eq:generatingSM} and \eqref{eq:pdensity}.
This structure involves some hidden symmetries of branching processes and Brownian motion, such as 
Le Gall's branching description of random walks stopped at the first descending ladder time \cite{LeGall89},
the Ray-Knight description of Brownian local times \cite{Knight63, Ray63} and McKean's description of the three-dimensional Bessel process \cite{McKean62}, lie behind these formulas.
The main result is stated as follows.

\begin{theorem}
\label{thm:main}
Let $(S_k, \, k \ge 0)$be a random walk with i.i.d. symmetric Laplace increments with density \eqref{eq:Laplace}.
Let $(W_k, \, k \ge 0)$ be defined by \eqref{eq:Wkrep} as the limiting distribution of $(W_{k,n}, \, k \ge 0)$ with $W_{k,n}:=M_{k,n} - M_{0,n}$.
Then
\begin{enumerate}[itemsep = 3 pt]
\item
(Branching representation) 
$(W_k, \, k \ge 0)$ with $W_0 = 0$ is constructed as
\begin{equation}
\label{eq:WkLaprep}
W_k = \sum_{j=1}^k \frac{\varepsilon_j}{S^{\pm \uparrow}_{2j}} \quad \mbox{for } k \ge 1,
\end{equation}
where $(S^{\pm \uparrow}_{n}, \, n \ge 0)$ is the upward Feller chain derived from a simple symmetric random walk, and $(\varepsilon_j, \, j \ge 1)$ is a sequence of i.i.d. standard exponential variables independent of $(S^{\pm \uparrow}_{n}, \, n \ge 0)$.
Consequently, for each $k = 1,2,\ldots$ the distribution of $D_k:= W_k - W_{k-1} = \varepsilon_k/S^{\pm \uparrow}_{2k}$ is determined by
\begin{equation}
\label{eq:tailDj}
\mathbb{P}(D_k > v) = \sum_{i = 1}^k i P^{2k-2}_0(2,2i) e^{-2iv} \quad \mbox{for } v > 0,
\end{equation}
where $P_0$ is the transition matrix of a simple symmetric random walk on the nonnegative integers with absorption at $0$.
That is,
\begin{equation}
\label{eq:P0}
P_0(i,j) = \frac{1}{2} 1(i>0, \, j = i \pm 1) \quad \mbox{for } i, j \ge 0.
\end{equation}
\item
(Squared Bessel representation)
The counting process $(N_W(v), \, v \ge 0)$ with 
\begin{equation}
N_W(v): = \sum_{k=1}^{\infty} 1(W_k \le v) \quad \mbox{for } v \ge 0,
\end{equation}
is a Cox process driven by $\left(\frac{1}{2} Q_{4}(2\gamma_{2},v), \, v \ge 0 \right)$,
where $Q_4$ is a squared Bessel process of dimension $4$, 
and $\gamma_2$ is a gamma random variable with density $xe^{-x}$, $x > 0$, independent of $Q_4$.
Moreover, the tail probability generating function of $(D_k, \, k \ge 1)$ is
\begin{equation}
\label{eq:tailPGFD}
\sum_{k = 1}^{\infty} \mathbb{P}(D_k > v)z^{k-1} = (\sqrt{1-z} \cosh v + \sinh v)^{-2} \quad \mbox{for } v > 0.
\end{equation}
\end{enumerate}
\end{theorem}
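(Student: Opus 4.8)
The plan is to read parts (1) and (2) as two complementary constructions of the same limiting point process, each exploiting a different one of the ``hidden symmetries'' advertised in the abstract, and then to extract the distributional formulas \eqref{eq:tailDj} and \eqref{eq:tailPGFD} from either picture.

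For part (1) I would start from the representation $W_k = M_k(\{-\Sdo_n : n \ge 0\} \cup \{\Sup_n : n \ge 1\})$ supplied by \cite{PT20} and recalled above, so that everything reduces to an explicit description of the two Feller chains $\Sup,\Sdo$ of the \emph{symmetric Laplace} walk. Here the memorylessness of the exponential does the work: whenever the Laplace walk first crosses a level, the overshoot is a standard exponential independent of the past, so each Feller chain splits into a ``skeleton'' recording how the path near its running extremum is organized into nested excursions, together with an independent family of i.i.d.\ exponential overshoots. The skeleton is precisely the object in Le Gall's branching description of the walk stopped at its first descending ladder epoch \cite{LeGall89}; the key point is that, once the Laplace-specific exponential jumps are stripped off, what remains of the combined $\uparrow$/$\downarrow$ structure is the branching/renewal-cluster mechanism of the \emph{simple} symmetric walk, the index $2j$ in \eqref{eq:WkLaprep} arising from interleaving the upward and downward ladder structures. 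This identifies $D_k = W_k - W_{k-1}$ as $\varepsilon_k / S^{\pm\uparrow}_{2k}$, with $S^{\pm\uparrow}_{2k}$ the number of ``particles at generation $2k$'' and $\varepsilon_k$ the overshoot producing the $k$-th new order statistic. Formula \eqref{eq:tailDj} then follows by conditioning on $S^{\pm\uparrow}_{2k}$: since $\mathbb{P}(D_k > v \mid S^{\pm\uparrow}_{2k} = 2i) = e^{-2iv}$ and the Feller chain of the simple walk is a Doob $h$-transform of the walk $P_0$ absorbed at $0$, one has $\mathbb{P}(S^{\pm\uparrow}_{2k} = 2i) = i\,P_0^{2k-2}(2,2i)$, which is exactly \eqref{eq:tailDj}; summing the series $\sum_k z^{k-1}\mathbb{E}[e^{-vS^{\pm\uparrow}_{2k}}]$ against the resolvent $(I - zP_0^2)^{-1}$ is then a routine generating-function computation yielding \eqref{eq:tailPGFD}.

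For part (2) I would exploit the Brownian embedding of the Laplace walk: if $B$ is a standard Brownian motion and $0 = T_0 < T_1 < \cdots$ are the epochs of an independent Poisson process of rate $1/2$, then $(B_{T_n})_{n\ge0}$ has exactly the law of the symmetric Laplace walk \eqref{eq:Laplace}, so $\{S_0,\ldots,S_n\}$ is the set of values of $B$ sampled along a Poisson clock. Letting $n\to\infty$ and decomposing the sampled path near its minimum by Williams' path decomposition of $B$ at a minimum, the pre- and post-minimum pieces converge to two independent copies of Brownian motion conditioned to stay positive, i.e.\ of the three-dimensional Bessel process started at $0$ in McKean's description \cite{McKean62}, each carrying an independent Poisson sampling clock. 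The occupation profile of these two pieces as a function of level above the minimum is governed by the Ray--Knight description of Brownian local times near the minimum \cite{Knight63, Ray63}: each side contributes a squared Bessel process of dimension $2$, and by additivity of squared Bessel processes the two sides combine to the dimension $4 = 2+2$ process $Q_4$, with level playing the role of the Bessel time parameter. Given this profile, the Poisson-sampled points above the minimum form a Cox process whose intensity at level $v$ is $\tfrac12 Q_4(\cdot,v)$; the random initial value is forced, by consistency with part (1), to be the Gamma$(2)$ variable $2\gamma_2$ (since $2\gamma_2 \stackrel{d}{=} Q_4(0,1)$ one may equivalently write the intensity as $\tfrac12 Q_4(0,1+v)$), and it is interpreted as the overshoot of the Poisson clock at the minimum. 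Finally \eqref{eq:tailPGFD} is recovered from this representation via the Pitman--Yor formula $\mathbb{E}[\exp(-\tfrac12\int_0^t Q_\delta(x,s)\,ds)] = (\cosh t)^{-\delta/2}\exp(-\tfrac{x}{2}\tanh t)$ with $\delta = 4$, averaged over $x = 2\gamma_2$ using $\mathbb{E}[e^{-\lambda\gamma_2}] = (1+\lambda)^{-2}$: this gives $\mathbb{P}(D_1 > v) = \mathbb{P}(N_W(v)=0) = (\cosh v + \sinh v)^{-2} = e^{-2v}$ in agreement with \eqref{eq:tailDj}, and the $z$-dependent version follows from the same computation combined with the Campbell--Mecke formula for the Cox process. (As a bookkeeping check one verifies that \eqref{eq:tailPGFD} is equivalent to Schehr--Majumdar's generating function \eqref{eq:generatingSM}, recovering also the density \eqref{eq:pdensity}.)

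The main obstacle is the structural identification underlying \eqref{eq:WkLaprep}: proving that, after the exponential overshoots are extracted, the residual combinatorial skeleton of the \emph{Laplace} walk's two Feller chains is \emph{exactly} — not merely asymptotically — the Feller chain of the \emph{simple} symmetric walk, with the correct interleaving producing the time index $2j$. In the Brownian picture this is the same difficulty in another guise: one must control the discrepancy between the sampled minimum $M_{0,n}$ and the true Brownian minimum and show that the Poisson overshoot there contributes precisely the Gamma$(2)$ shift $2\gamma_2$, so that the two descriptions agree on the nose. Once this identification is in place, both \eqref{eq:tailDj} and \eqref{eq:tailPGFD} reduce to known transition-probability and squared-Bessel computations.
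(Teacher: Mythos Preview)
Your big-picture ingredients are correct --- Le Gall's branching description, the Poisson embedding in Brownian motion, Williams' decomposition, Ray--Knight, and the additivity $2+2=4$ for squared Bessel dimensions --- and you rightly flag the structural identification behind \eqref{eq:WkLaprep} as the crux. But your proposed mechanism for that identification is not the one that works, and this is a genuine gap.

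You attribute the index $2j$ in \eqref{eq:WkLaprep} to ``interleaving the upward and downward ladder structures'' of the two Feller chains $\Sup,\Sdo$. I do not see how to make this precise: the points of $\Sup$ and $-\Sdo$ do not alternate in any controlled way, and there is no reason the $k$-th combined order statistic should correspond to step $2k$ of a single chain. The paper takes a different route. It first (Corollary~\ref{coro:identify2}) identifies the law of $N_W$ with the limit of $N_{\tiny\mbox{des}}$ \emph{conditioned on} $\tau^->m$ as $m\to\infty$, thereby replacing the superposition of two Feller chains by a \emph{single} long excursion of the stopped walk. Le Gall's branching description of that excursion (Lemma~\ref{lem:branching}, Theorem~\ref{thm:bdcox}) then shows that the number $Y_k$ of sub-excursions above level $M_{k-1}$ is a \emph{lazy} simple walk with transition matrix $\tfrac12(I+P_0)$ --- the holding probability $\tfrac12$ coming from the $\tfrac14,\tfrac12,\tfrac14$ offspring law --- and that, given $Y$, the $k$-th spacing is exponential with rate $2Y_k$ (Lemma~\ref{lem:idDelta}). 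Conditioning on survival turns $Y$ into its Doob $h$-transform $Y^\uparrow$, and the index $2j$ finally emerges from the elementary identification $2Y^\uparrow_j \stackrel{d}{=} S^{\pm\uparrow}_{2j}$: a lazy simple walk at time $j$ has the law of a simple walk at time $2j$, after rescaling the state by $2$. So the $2j$ encodes ``lazy $\to$ non-lazy,'' not ``two chains combined.''

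For part~(2), your path-decomposition argument (two independent $\mbox{BES}_3$ pieces at the minimum, Ray--Knight giving $\mbox{BESQ}_2+\mbox{BESQ}_2=\mbox{BESQ}_4$) is valid and is exactly what Section~\ref{sc7} carries out; but the paper's primary proof (Theorem~\ref{thm:Poissoncluster}) goes instead through Tanaka's decomposition, writing $N_W$ as a rate-$2$ Poisson cluster process with clusters distributed as $1+N_{\tiny\mbox{des}}$, and then matching its generating function \eqref{eq:NlamPGF} against \eqref{eq:jointLaplace} to recognize a Cox process driven by $\tfrac12 Q_4(2\gamma_2,\cdot)$. The starting value $2\gamma_2$ is not ``forced by consistency with part~(1)'' but computed directly, either from the cluster generating function or from the Laplace transform $\mathbb{E}e^{-\alpha Q_4(0,T_1)}=(1+2\alpha)^{-2}$ in \eqref{eq:67}. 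Finally, your claim that \eqref{eq:tailPGFD} follows from \eqref{eq:tailDj} by a routine resolvent computation understates the difficulty: the paper derives \eqref{eq:tailPGFD} entirely from the Cox side via formula~\eqref{eq:215} and a separate calculation of the law of $Q_4$ at the first $(1-z)$-thinned point, and explicitly presents the agreement of \eqref{eq:tailDj} with \eqref{eq:tailPGFD} as a nontrivial combinatorial identity rather than a check.
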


\quad The proof of Theorem \ref{thm:main},  given in Section \ref{sc6}, combines ideas from branching processes, excursion theory and path decompositions of Brownian motion. 
The key idea is to study the point process
\begin{equation}
\label{eq:Ndes}
N_{\tiny \mbox{des}}(v): = \sum_{k = 1}^{\tau^{-} - 1} 1(S_k \le v) \quad \mbox{for } v \ge 0,
\end{equation}
where $\tau^-: = \inf\{k: S_k < 0\}$ is the first descending ladder time of $S$.
It will be shown that the point process $N_W$ can be identified either as a limit of $N_{\tiny \mbox{des}}$ conditioned on $\tau^- > m$ as $m \to \infty$,
or as an i.i.d. superposition of $N_{\tiny \mbox{des}}$.
That yields the two different descriptions of the limiting order statistics $(W_k, \, k \ge 0)$.
These two descriptions are also related to the time change between the squared Bessel process of dimension $4$ and the three-dimensional Bessel process, found by Biane and Yor \cite{BY85}.

\quad From the formulas \eqref{eq:WkLaprep}--\eqref{eq:tailDj} we see that the limiting gap distribution of $D_k$ is a probabilistic mixture of exponential distributions with rates $2i$ for $i = 1, \ldots, k$, where the mixing distribution of $i$ is $\mathbb{P}(S^{\pm \uparrow}_{2k-2} = i)$ for 
$S^{\pm \uparrow}$ the Feller chain of a simple symmetric walk.
It is known \cite{Pitman75} that the scaling limit of the upwardly conditioned simple symmetric random walk $(\Sup_n, \, n \ge 0)$ is the three-dimensional Bessel process $(R_t, \, t \ge 0)$ starting from $R_0=0$.
By the branching representation \eqref{eq:WkLaprep}, we obtain
\begin{equation}
\label{eq:113}
\sqrt{\frac{k}{2}} D_k \stackrel{d}{\longrightarrow} \frac{\varepsilon}{2 \chi_3},
\end{equation}
where $\varepsilon$ is exponential with mean $1$, independent ot $\chi_3: = R_1$ with $\chi_3^2$ is the sum of three independent standard Gaussian variables. 
Using the integral formula for the density of a ratio of independent random variables, 
it is easily verified that 
$$\mathbb{P}\left(\frac{\varepsilon}{2 \chi_3} \in dx\right)/dx = p(x)$$ 
as in \eqref{eq:pdensity}. 
Further information provided by this argument is the Mellin transform of the limit density
\begin{equation}
\label{eq:114}
\mathbb{E}\left( \frac{\varepsilon}{2 \chi_3}\right)^s = 2^{-3s/2} \frac{\Gamma(s+1)\Gamma(3/2-s/2)}{\Gamma(3/2)} = \int_0^\infty x^sp(x)dx \quad \mbox{for } -1 < s < 3.
\end{equation}
As a check, it follows from \cite[(6)]{SM12} or \cite[(3.8)]{PT20} that
\begin{equation*}
\mathbb{E}D_{k,n} = u_{k} + u_{n-k+1} \quad \mbox{for } 1 \le k \le n, 
\end{equation*}
where $u_{m}:= \mathbb{P}(S^{\pm}_{2m} = 0) = \binom{2m}{m} 2^{-2m} \sim \frac{1}{\sqrt{\pi m}}$ as $m \to \infty$, 
with $(S^{\pm}_k, \, k \ge 0)$ a simple symmetric random walk. 
This shows that $\mathbb{E}D_k = \lim_{n \to \infty} \mathbb{E}D_{k,n} = u_k \sim \frac{1}{\sqrt{\pi k}}$ as $k \to \infty$,
which agrees with the asymptotics of $\mathbb{E}D_k$ as $k \to \infty$ implied by the formulas 
\eqref{eq:113}--\eqref{eq:114}.
The generating function \eqref{eq:generatingSM} is an easy consequence of the squared Bessel representation \eqref{eq:tailPGFD} by taking derivative in $v$.
Furthermore, the $2m$-step transition probabilities of $P_0$ defined by \eqref{eq:P0} are
\begin{equation}
P^{2m}_0(2i,2j) = \frac{1(i>0)}{2^{2m}}\left[\binom{2m}{m-i+j} - \binom{2m}{m+i+j} \right]  \quad \mbox{for } i, j \ge 0.
\end{equation}
Thus, the formulas \eqref{eq:tailDj} and \eqref{eq:tailPGFD} from two different descriptions of $(W_k, \, k \ge 0)$
give the following non-trivial agreement formula:
\begin{equation}
\scaleto{\sum_{k = 1}^{\infty} \sum_{i = 1}^k \frac{i e^{-2iv}}{2^{2k-2}} \left[ \binom{2k-2}{k-2+i} - \binom{2k-2}{k+i}\right] z^{k-1}= (\sqrt{1-z} \cosh v + \sinh v)^{-2} \quad \mbox{for } v > 0.}{32 pt}
\end{equation}

\quad The remainder of the paper is organized as follows. 
Section \ref{sc2} recalls some basic results about the Feller chains, squared Bessel processes and Cox processes.
Sections \ref{sc3} and \ref{sc4} provide analysis of the Laplace walk through branching processes.
In Sections \ref{sc5} and \ref{sc6}, we establish the squared Bessel representation of the point processes $N_{\tiny \mbox{des}}$ and $N_W$ by embedding the Laplace walk into a Brownian motion.
Theorem \ref{thm:main} is proved in Section \ref{sc6}.
Finally in Section \ref{sc7}, we study the path decomposition at the minimum of Brownian motion in which the Laplace walk is embedded. 
This provides further insight into how the squared Bessel process of dimension $4 = 2 + 2$ arises.
Below we highlight some main results in this paper.

\begin{center}
\begin{tabular}{ |c|c|c| } 
\hline
Branching representation & Squared Bessel representation & Path decomposition \\
\hline
Corollary \ref{coro:bdcox}, Theorem \ref{thm:GLdescription}& Theorem \ref{thm:SBdescription}, Theorem \ref{thm:Poissoncluster} & Theorem \ref{thm:pathdecomp}, Corollary \ref{coro:last} \\
\hline
\end{tabular} 

\medskip
Table 1. A roadmap of the main results.
\end{center}

\section{Background and preliminaries}
\label{sc2}

\quad This section provides background and a few useful results about the Feller chains, squared Bessel processes
and Cox processes.

\subsection{Renewal cluster representation for the Feller chains}
\label{sc21}
Let $(S_k, \, k \ge 0)$ be a random walk with i.i.d. increments $X_1, X_2, \ldots$,
and $(\Sup_k, \, k \ge 0)$ and $(\Sdo_k, \, k \ge 0)$ be the upward and downward Feller chains.
It is known that the Feller chains $\Sup$ and $\Sdo$ can be understood as the Doob-$h$ transform of the walk $S$ with respect to the harmonic functions 
$h^{\uparrow}(x): = \mathbb{E}\left(\sum_{k = 0}^{\tau^{+} - 1}1(S_k > -x) \right)$ with $\tau^{+}: = \inf\{k > 0: S_k > 0\}$ and $x>0$, and $h^{\downarrow}(x): = \mathbb{E}\left(\sum_{k = 0}^{\tau^{-0} - 1}1(S_k  \le -x) \right)$ with $\tau^{-0}: = \inf\{k > 0: S_k \le 0\}$ and $x<0$.
Here we present a pathwise construction of $(\Sup_k, \, k \ge 0)$ and $(\Sdo_k, \, k \ge 0)$ from the walk $(S_k, \, k \ge 0)$.
This construction with a finite time horizon $n$ was introduced by Feller \cite[XII.8, Lemma 3]{Feller2} to provide a combinatorial proof of Sparre Andersen's identity, and was extended to the infinite horizon in \cite{Bertoin93, PT20}.
Formally, the upward Feller chain $S^{\uparrow}$ is the sequence of partial sums of those increments $X_k$ of the walk $S$ with $S_k > 0$, and the downward Feller chain $S^{\uparrow}$ is the sequence of partial sums of those increments $X_k$ of the walk $S$ with $S_k \le 0$.
Let $N_n^{+}: = \#\{k \le n: S_k > 0\}$ and $N_n^{-}: = n - N_n^{+}$.
The above construction gives partial sum processes $(\Sup_k, \, 0 \le k \le N^{+}_n)$ and $(\Sdo_k, \, 0 \le k \le N^{-}_n)$ of random lengths $N^{+}_n$ and $N^{-}_n$ respectively,
and
\begin{equation}
\label{eq:krecovery}
N_k^{+} = N_{k-1}^{+} + 1(S_k > 0), \quad N_k^{-} = N_{k-1}^{-} + 1(S_k \le 0), \quad S_k = \Sup_{N_k^{+}} + \Sdo_{N_k^{-}}.
\end{equation}
By letting $n \to \infty$, we obtain the two infinite horizon Feller chains with the convention that $S^{\uparrow}_k = \infty$ for 
$k > N^{+}_{\infty}:=\lim_{n \to \infty} N^{+}_n$, 
and $-S^{\downarrow}_k = \infty$ for $k > N^{-}_{\infty}:=\lim_{n \to \infty} N^{-}_n$.

\quad For each fixed $n$, let $M^{\uparrow}_{k,n}:=M_k(S^{\uparrow}_k, \, 0 \le k \le N^{+}_n)$ 
and $M^{\downarrow}_{k,n}:=M_k(-S^{\downarrow}_k, \, 0 \le k \le N^{-}_n)$ be the $k^{th}$ order statistics of 
$(\Sup_k, \, 0 \le k \le N^{+}_n)$ and $(-\Sdo_k, \, 0 \le k \le N^{-}_n)$ respectively. 
All that happens to $\left((M^{\uparrow}_{k,n}, M^{\downarrow}_{k,n}), \, k = 0,1,\ldots\right)$ in incrementing from $n$ to $n+1$ is that one more value is sampled from one or other of the two Feller chains, and either this value $S_{n+1}$ is inserted into $(M^{\uparrow}_{k,n}, \, k = 0,1,\ldots)$ if $S_{n+1} > 0$, 
or $-S_{n+1}$ is inserted into $(M^{\downarrow}_{k,n}, \, k = 0,1,\ldots)$ if $S_{n+1} \le 0$.
To be more precise, when $S_{n+1} > 0$ in the update from $n$ to $n+1$, each value $M^{\uparrow}_{k,n}$ with $M^{\uparrow}_{k,n} \le S_{n+1}$ remains unchanged, and if there are $j$ such values then
$M^{\uparrow}_{j+1} = S_{n+1}$ and $M^{\uparrow}_{j+i} = M^{\uparrow}_{j+i-1,n}$ for each $i \ge 2$.
In terms of the counting process
$N_n^{\uparrow}(v):= \sum_{k = 1}^{N^{+}_n} 1(S_k^{\uparrow} \le v)$, $v \ge 0$,
all that happens in incrementing from $n$ to $n+1$ is that an extra point is added at $S_{n+1}=S^{\uparrow}(N_{n+1}^{+})$ if $S_{n+1} > 0$ while there is no change if $S_{n+1} \le 0$.
As is clear from this description, for each $v> 0$,
\begin{equation*}
\mbox{the sequence } N_n^{\uparrow}(v) \mbox{ is increasing to } N^{\uparrow}(v):=\sum_{k=1}^{\infty}1(S^{\uparrow}_k \le v) \mbox{ as } n \to \infty,
\end{equation*}
which is known to be finite with probability one.
A similar description for updating the counting process $N_{n}^{\downarrow}(v):=\sum_{k = 1}^{N^{-}_n} 1(-S_k^{\downarrow} \le v)$, $v \ge 0$ yields for each $v >0$,
\begin{equation*}
\mbox{the sequence } N_n^{\downarrow}(v) \mbox{ is increasing to } N^{\downarrow}(v):=\sum_{k=1}^{\infty}1(-S^{\downarrow}_k \le v) \mbox{ as } n \to \infty.
\end{equation*}
A more precise description of these limiting point processes $N^{\uparrow}$ and $N^{\downarrow}$ on the positive half line is given as follows.

\quad Let $N_0:=(N_0(v), \, v \ge 0)$ be a point process on $[0,\infty)$ and let $\Delta_0$ be a random variable with values in $[0,\infty]$ and $\mathbb{P}(0< \Delta_0<\infty)>0$, defined on the same probability space as $N_0$, according to some joint distribution with $N_0$.
Call a point process $(N(v), \, v \ge 0)$ a renewal cluster process driven by $(N_0, \Delta_0)$ if for $v \ge 0$,
\begin{equation}
N(v) = \sum_{k = 1}^{\infty} N_k(v - T_k) 1(T_k < \infty) = \sum_{k=1}^{\infty}\sum_{i=1}^{N_k(\infty)} 1(T_k + V_{k,i} \le v),
\end{equation}
where $\left((N_k, \Delta_k), \, k = 1,2,\ldots \right)$ is a sequence of i.i.d. copies of $(N_0, \Delta_0)$, with points 
$(V_{k,i}, \, 1 \le i \le N_k(\infty))$ and $T_k:=\Delta_1 + \cdots + \Delta_k$ is the time of the $k^{th}$ renewal in a renewal process with inter-arrival times distributed as $\Delta_0$.

\quad Renewal cluster processes were introduced by Lewis \cite{Lewis64} with the additional assumption that $\Delta_k$ is independent of $N_k$. 
But this is not always the case for the renewal cluster processes generated by the order statistics of Feller chains, as in the following lemma.
The results in this lemma are largely due to Tanaka \cite{Tanaka89}.
The connection to the Feller chains was provided in \cite{Bertoin93}, and some further clarifications and refinements have been drawn from \cite{Biggins03} regarding the renewal cluster process.

\begin{lemma}[Tanaka's decomposition]
\label{lem:clusterTanaka}
Let $(\Sup_k, \, k \ge 0)$ and $(\Sdo_k, \, k \ge 0)$ be the two Feller chains of the random walk $(S_k, \, k \ge 0)$.
\begin{enumerate}[itemsep = 3 pt]
\item
Define the sequence $(T_k^{\uparrow}, \, k = 0,1,\ldots)$ of strictly ascending future minimum times of $\Sup$ by $T^{\uparrow}_0 = 0$ and for $k \ge 1$,
$T_k^{\uparrow}:= \max \left\{j>T_{k-1}^{\uparrow}: S^{\uparrow}_j = \min_{i > T_{k-1}^{\uparrow}} \Sup_i \right\}$.
Then 
the counting process
\begin{equation}
N^{\uparrow}(v) = \sum_{k = 1}^{\infty}  1(T^{\uparrow}_k < \infty) \sum_{i=0}^{T^{\uparrow}_k - T^{\uparrow}_{k-1} - 1}
1\left(\Sup_{T^{\uparrow}_k-i} \le v \right) \quad \mbox{for }v \ge 0,
\end{equation}
is a renewal cluster process driven by $(N^+, \tau^+)$ for $N^{+}(v):= \sum_{k=0}^{\tau^{+}-1}1(-S_k \le v)$ the occupation process of $-S$ prior to $\tau^{+}$.
\item
Define the sequence $(T_k^{\downarrow}, \, k = 0,1,\ldots)$ of weakly ascending future minimum times of $\Sdo$ by $T^{\downarrow}_0 = 0$ and for $k \ge 1$,
$T_k^{\downarrow}:= \min \left\{j>T_{k-1}^{\downarrow}: S^{\downarrow}_j = \max_{i > T_{k-1}^{\downarrow}} \Sdo_i \right\}$.
Then the counting process
\begin{equation}
N^{\downarrow}(v) = \sum_{k = 1}^{\infty}  1(T^{\downarrow}_k < \infty) \sum_{i=0}^{T^{\downarrow}_k - T^{\downarrow}_{k-1} - 1}
1\left(-\Sdo_{T^{\downarrow}_k-i} \le v \right) \quad \mbox{for }v \ge 0,
\end{equation}
is a renewal cluster process driven by $(N^{-0}, \tau^{-0})$ for $N^{-0}(v):= \sum_{k=0}^{\tau^{-0}-1}1(S_k \le v)$ the occupation process of $S$ prior to $\tau^{-0}$.
\end{enumerate}
\end{lemma}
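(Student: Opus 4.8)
The plan is to prove part~(1) in full and to obtain part~(2) by the mirror substitutions (replace the strict epoch $\tau^+=\inf\{k>0:S_k>0\}$ by the weak one $\tau^{-0}=\inf\{k>0:S_k\le 0\}$, ``$\max$'' by ``$\min$'' in the definition of the future-minimum times, strict future minima by weak ones, and the $h^\uparrow$-transform built from the steps with $S_k>0$ by the $h^\downarrow$-transform built from the steps with $S_k\le 0$). The backbone is three facts about the infinite-horizon Feller chain $\Sup$, which is the Doob $h^\uparrow$-transform of $S$ killed on leaving $(0,\infty)$, hence a Markov chain that either drifts to $+\infty$ (on $\{N^+_\infty=\infty\}$) or terminates after $N^+_\infty<\infty$ steps (precisely when $S$ drifts to $-\infty$, i.e.\ when $\mathbb P(\tau^+=\infty)>0$). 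The three facts: $\Sup$ admits a renewal decomposition at its strictly ascending future minima into i.i.d.\ blocks; a single block, read backwards from its right endpoint, has the law of the path $(-S_0,-S_1,\dots,-S_{\tau^+-1})$; and assembling these i.i.d.\ blocks into the translated superposition produces exactly the renewal cluster structure with driving pair $(N^+,\tau^+)$.

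First I would establish the regeneration. The times $T_k^\uparrow$ partition the index set of $\Sup$ into successive blocks $(T_{k-1}^\uparrow,T_k^\uparrow]$, and $(\Sup_{T_k^\uparrow})_{k\ge 0}$ is the increasing sequence of values attained by the future-infimum process of $\Sup$, with $T_k^\uparrow$ the \emph{last} index at which the $k$-th such value is attained; these are the last-exit times of the classical future-infimum decomposition of a transient Markov chain. The last-exit decomposition gives that, conditionally on $\Sup_{T_k^\uparrow}$, the stopped past $(\Sup_j)_{j\le T_k^\uparrow}$ and the recentred future $(\Sup_{T_k^\uparrow+j}-\Sup_{T_k^\uparrow})_{j\ge 0}$ are independent; and — this is where the $h^\uparrow$-structure enters — the law of the recentred future does not depend on the level $\Sup_{T_k^\uparrow}$, because $\Sup$ started from $x$ and conditioned to stay strictly above $x$ thereafter is the $x$-translate of $\Sup$ started from $0$. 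Hence the blocks, together with their sizes, are i.i.d.; on $\{N^+_\infty<\infty\}$ only finitely many $T_k^\uparrow$ are finite, matching the renewal terminating at the first block of infinite size, an event of probability $\mathbb P(\tau^+=\infty)$.

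Next I would identify the law of one recentred block, read in decreasing order of index from $T_k^\uparrow$: the claim is that it coincides with the occupation process $N^+(v)=\sum_{k=0}^{\tau^+-1}1(-S_k\le v)$ of $-S$ prior to $\tau^+$, with the number of its points (equivalently the relevant renewal increment) distributed as $\tau^+$. This is Tanaka's path-reversal theorem \cite{Tanaka89}: reversing time in a Doob $h$-transform yields the $\widehat h$-transform of the space-reversed (dual) walk, and the block of $S$ conditioned to stay positive between consecutive strict future minima is dual to the path of $-S$ run until its first strict passage below $0$, i.e.\ to $S$ run until $\tau^+$. The translation of this reversal into the language of the Feller chains is carried out in \cite{Bertoin93}, and the renewal-cluster bookkeeping — in particular allowing $\Delta_k$ to be a deterministic function of $N_k$ rather than independent of it, as is forced here — is the refinement drawn from \cite{Biggins03}. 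Combining the two steps exhibits $N^\uparrow$ as the translated superposition of i.i.d.\ copies of $N^+$ with inter-cluster spacing governed by $\tau^+$, which is part~(1). Part~(2) follows verbatim once the strict objects are replaced by their weak counterparts as indicated above; this matched change of conventions is precisely what makes both identities hold on the nose rather than only up to boundary corrections.

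The main obstacle is the second step: making Tanaka's reversal identity precise and tracking the strict/weak inequalities so that the \emph{strictly} ascending future minima of $\Sup$ are paired with the strict epoch $\tau^+$ while the \emph{weakly} ascending future minima of $\Sdo$ are paired with $\tau^{-0}$. A secondary technicality is the passage to the infinite horizon: one may instead verify the block decomposition and the joint law $(N^+,\tau^+)$ at a finite horizon $n$ from Feller's combinatorial construction \cite[XII.8]{Feller2} (a cycle-lemma / Sparre Andersen computation) and then let $n\to\infty$, using the monotone convergence $N_n^\uparrow(v)\uparrow N^\uparrow(v)$ recorded above and checking that the truncation $1(T_k^\uparrow<\infty)$ is exactly the event that the renewal has already absorbed a block of infinite size.
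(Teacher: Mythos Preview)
The paper does not supply its own proof of this lemma. It is stated as a known result, with the sentence preceding it attributing the content to Tanaka \cite{Tanaka89}, the link to Feller chains to Bertoin \cite{Bertoin93}, and the renewal-cluster refinements to Biggins \cite{Biggins03}; the lemma statement is followed immediately by the next subsection with no proof environment.

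Your proposal is therefore not competing against any argument in the paper, and the sketch you give---regeneration at future-minimum times via a last-exit decomposition, identification of a single block by Tanaka's time-reversal, and assembly into a renewal cluster process with the dependence between $N_k$ and $\Delta_k$ handled as in Biggins---is precisely the synthesis of the three cited sources that the paper is pointing to. The strict/weak bookkeeping you flag (pairing strictly ascending future minima of $\Sup$ with $\tau^+$, and weakly ascending future minima of $\Sdo$ with $\tau^{-0}$) is the one genuine point of care, and you have identified it correctly. Nothing further is needed.
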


\subsection{Squared Bessel processes}
\label{sc22}
For each fixed $\delta > 0$ and a fixed or a random level $X \ge 0$, 
let $(Q_{\delta}(X,t), \, t \ge 0)$ denote a $\mbox{BESQ}_\delta(X)$ process, 
that is a squared Bessel process of dimension $\delta$ with initial state $Q_{\delta}(0) = X$.
It is known that $(Q_{\delta}(X,t), \, t \ge 0)$ is for each $\delta \ge 0$ and $X \ge 0$ the unique strong solution to the stochastic integral equation:
\begin{equation}
\label{eq:SBsde}
Q_t = X + 2 \int_0^t \sqrt{Q}_s dB_s + \delta t, \quad t \ge 0,
\end{equation}
where $(B_t, \, t \ge 0)$ is a standard Brownian motion independent of $X$.
For each fixed $\delta \ge 0$ the family of laws of $\mbox{BESQ}_\delta(x)$ indexed by $x \ge 0$ is the family of laws of a strong Markov diffusion process on $[0, \infty)$, whose infinitesimal generator acting on suitable smooth functions is 
$2x \frac{d^2}{dx^2} + \delta \frac{d}{dx}$.
The $\mbox{BES}_{\delta}(X)$ process is the square root of the $\mbox{BESQ}_{\delta}(X^2)$ process.

\quad For positive integer $\delta$ and $X \ge 0$, the process $(Q_{\delta}(X,t), \, t \ge 0)$ may be constructed as 
$Q_{\delta}(X,t) = (\sqrt{X} + B^1_t)^2 + \sum_{i = 2}^\delta (B^i_t)^2$
where $(B^i_t, \, t \ge 0)$ are independent standard Brownian motions.
Pythagoras's theorem shows that for positive integer $\delta$ and $\widehat{\delta}$ these processes enjoy the additivity property that
if $Q_{\delta}(X) = (Q_{\delta}(X,t), \, t \ge 0)$ and $\widehat{Q}_{\widehat{\delta}}(\widehat{X}) = (\widehat{Q}_{\widehat{\delta}}(\widehat{X},t), \, t \ge 0)$ are two independent squares of Bessel processes with the indicated dimensions and starting states,
then there is the identity in distribution of processes on the space $\mathcal{C}[0,\infty)$ of continuous real-valued paths:
\begin{equation}
\label{eq:additiveSB}
Q_{\delta}(X) + \widehat{Q}_{\widehat{\delta}}(\widehat{X}) \stackrel{d}{=} Q_{\delta + \widehat{\delta}}(X + \widehat{X}).
\end{equation}
It was shown by Shiga and Watanabe \cite{SW73} that this property extends to all real $\delta, \widehat{\delta} \ge 0$ and $X, \widehat{X} \ge 0$, which provides an alternative definition of the law of $\mbox{BESQ}_{\delta}(X)$ for $\delta \notin \{1,2,\ldots\}$.

\quad The $\mbox{BESQ}_{0}(X)$ process, started at some level $X \ge 0$ plays an important role in understanding the extreme order statistics of the Laplace walk stopped at the first descending ladder time.
According to the result of Shiga and Watanabe, the distribution of $(Q_{0}(X,t), \, t \ge 0)$ with continuous paths is uniquely determined by the identity in law \eqref{eq:additiveSB} for $\widehat{\delta} = 0$, $\widehat{X} = x$, for any particular $X \ge 0$ and $\delta \in \{1,2, \ldots\}$.
The $\mbox{BESQ}_{0}(X)$ process is also called the Feller diffusion. 
As shown by Feller \cite{Feller51}, $\mbox{BESQ}_{0}(X)$ models the total population mass in a continuous state critical branching diffusion process with initial mass $X$.
It is also well known that the $\mbox{BESQ}_{\delta}(X)$ process for $\delta > 0$ models a similar branching diffusion process with an immigration rate controlled by the parameter $\delta$.
See \cite{PW18, RY99} for further background and references of squared Bessel processes. 

\quad From considerations as above and some further stochastic calculus, there is a systematic method, first developed in \cite{PY82}, to compute explicitly the Laplace transform of $\int_0^{\infty} Q_{\delta}(x,t) \mu(dt)$ for all $\delta \ge 0$, $x \ge 0$ and for any $\mu$ on $[0,\infty)$ such that the integral is finite, in terms of suitable solutions to a Sturm-Liouville equation associated with the measure $\mu$.
In particular, for $\mu$ a mixture of uniform distribution on $[0,v]$ and a Dirac mass at $v$, there is the following formula for the joint Laplace transform of $Q_{\delta}(x,v)$ and $\int_0^v Q_{\delta}(x,u) du$ \cite[(2.k)]{PY82}, which has many repercussions in this work:
\begin{multline}
\label{eq:jointLaplace}
\scaleto{\mathbb{E} \exp\left(-\alpha Q_{\delta}(x,v) - \frac{\beta^2}{2} \int_0^v Q_{\delta}(x,u) du \right) 
= \left(\cosh \beta v +  \frac{2 \alpha}{\beta} \sinh \beta v \right)^{-\delta/2} \exp\left( - \frac{x \beta}{2} \frac{1 + \frac{2 \alpha}{\beta} \coth \beta v}{\frac{2 \alpha}{\beta} + \coth \beta v}\right)}{27 pt}
\end{multline}
for $\alpha > 0$ and $\beta \ne 0$.
See also \cite{GY03, Pitman96} and \cite[Chapter XI, \S 1]{RY99}, for various derivations, applications and developments of these formulas.

\quad We also recall the Ray-Knight theorem \cite{Knight63, Ray63} for Brownian local time processes in terms of squared Bessel processes.
See \cite[Chapter VI]{RY99} and \cite{MR06} for background and laws of various Brownian local time processes. 
\begin{lemma}[Ray-Knight theorem]
Let $(B_t, \, t \ge 0)$ be standard Brownian motion, and $(L(x,t), \, x \in \mathbb{R}, t \ge 0)$ be the bi-continuous local time process of $(B_t, \, t \ge 0)$, normalized as occupation density and for each $t \ge 0$,
\begin{equation*}
\int_0^t g(B_s)ds = \int_{-\infty}^{\infty} g(x)L(x,t)dx,
\end{equation*}
for all nonnegative measurable function $g$. 
Let $T_x: = \inf\{t >0: B_t = x\}$ be the first time at which Brownian motion hits the level $x$.
Then for each fixed $x > 0$, the Brownian local time process up to random time $T_{-x}$ is described as follows:
the process $(L(y, T_{-x}), \, y \ge -x)$ is a Markov process with homogeneous transition probabilities on each of the intervals $[-x, 0]$ and $[0,\infty)$, as a $\mbox{BESQ}_2$ on $[-x,0]$ and a $\mbox{BESQ}_0$ on $[0,\infty)$:
\begin{align}
(L(u-x, T_{-x}), \, 0 \le u \le x) & \stackrel{d}{=} (Q_2(0,u), \, 0 \le u \le x), \label{eq:RK2} \\ 
(L(u, T_{-x}), \, u \ge 0) & \stackrel{d}{=} (Q_0(2x \gamma_1, u), \, u \ge 0), \label{eq:RK0}
\end{align}
where $\gamma_1$ is a standard exponential variable. 
Consequently, the final state $L(0,T_{-x})$ at level $x$ of the first local time process indexed by $0 \le u \le x$ is the initial state of the second one indexed by $u \ge 0$, with $L(0, T_{-x}) \stackrel{d}{=} Q_2(0,x) \stackrel{d}{=} 2x \gamma_1$.
\end{lemma}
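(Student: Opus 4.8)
\emph{Proof proposal.} The plan is to obtain the semimartingale decomposition of the local time process $y \mapsto L(y, T_{-x})$ in the \emph{space} variable by means of Tanaka's formula, and then to recognize each piece as a solution of the squared Bessel stochastic differential equation \eqref{eq:SBsde}, whose strong uniqueness was recalled in Section~\ref{sc22}. Applying Tanaka's formula to $(B_t - a)^+$ stopped at $T_{-x}$ and using $B_0 = 0$, $B_{T_{-x}} = -x$ together with $\int_0^{T_{-x}} dB_s = -x$, one finds
\[
L(a, T_{-x}) = 2(a+x) + 2\int_0^{T_{-x}} 1(B_s \le a)\, dB_s \quad (-x \le a \le 0),
\]
and $L(a, T_{-x}) = 2x + 2\int_0^{T_{-x}} 1(B_s \le a)\, dB_s$ for $a \ge 0$. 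Thus $L(-x, T_{-x}) = 0$, the two expressions agree at $a = 0$, and subtracting the value at $0$ removes the drift on $[0,\infty)$; writing $Z_u := L(u - x, T_{-x})$ for $0 \le u \le x$ exhibits a linear drift $2u$, which is exactly the drift $\delta u$ of $\mathrm{BESQ}_\delta$ with $\delta = 2$.

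The substantive step is to upgrade the ordinary (time) stochastic integrals appearing above to martingales \emph{in the space variable} with the correct brackets. I would work with the filtration $(\mathcal G_a)$ in which $\mathcal G_a$ records the excursions of $B$ below level $a$ before $T_{-x}$, equivalently the local times $L(y,T_{-x})$ for $y \le a$ together with those excursions; with respect to $(\mathcal G_a)$ the process $N_a := \int_0^{T_{-x}} 1(B_s \le a)\, dB_s$ should be a continuous martingale. Adaptedness and orthogonality of increments follow from the strong Markov property of $B$ applied level by level: given $\mathcal G_a$, and in particular given $L(a, T_{-x}) = \ell$, the excursions of $B$ above level $a$ occurring before $T_{-x}$ form, indexed by local time at $a$ running over $[0,\ell]$, a Poisson process of It\^o excursions, and the higher local times $L(y, T_{-x})$ for $y > a$ are measurable functions of these. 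Computing $\mathbb E[(N_{a'} - N_a)^2 \mid \mathcal G_a]$ by the It\^o isometry and the occupation-times formula identifies the bracket as $\langle N\rangle_{a'} - \langle N\rangle_a = \mathbb E\bigl[\int_a^{a'} L(y,T_{-x})\,dy \mid \mathcal G_a\bigr]$, whence $d\langle N\rangle_a = L(a,T_{-x})\,da$. Feeding the drift and the bracket into the Dambis--Dubins--Schwarz representation yields $Z_u = 2u + 2\int_0^u \sqrt{Z_v}\, d\beta_v$ for a Brownian motion $\beta$, which is \eqref{eq:SBsde} with $\delta = 2$ and $X = 0$; by strong uniqueness this is the identity \eqref{eq:RK2}. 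The identical argument on $a \ge 0$, now with no drift and random initial value $L(0, T_{-x})$, gives a $\mathrm{BESQ}_0$ on $[0,\infty)$ started from $L(0, T_{-x})$, which is \eqref{eq:RK0}; eventual absorption at $0$ corresponds to $B$ visiting high levels only finitely often before $T_{-x}$. The Markov property across the join at level $0$ is automatic, since everything comes from a single family of Tanaka identities.

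It remains to identify the law of $L(0, T_{-x})$, which is the terminal value $Z_x$ of the $\mathrm{BESQ}_2(0)$ piece just constructed: $Q_2(0,x) = (B^1_x)^2 + (B^2_x)^2$ for independent standard Brownian motions $B^1, B^2$, so it equals $x$ times a chi-squared variable on two degrees of freedom, that is $2x\gamma_1$ with $\gamma_1$ standard exponential, giving $L(0,T_{-x}) \stackrel{d}{=} Q_2(0,x) \stackrel{d}{=} 2x\gamma_1$. The main obstacle I expect is the space-variable martingale property of the second paragraph: the careful choice of the filtration $(\mathcal G_a)$ and the excursion-theoretic argument that turns the level-by-level time martingales $N_a$ into a genuine martingale in $a$ with bracket $\int L$ (McKean's $\mathrm{BES}(3)$ description \cite{McKean62} entering for the local times \emph{within} an excursion); everything else is bookkeeping with Tanaka's formula, the occupation-times formula, and the uniqueness statement \eqref{eq:SBsde}. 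An alternative route, closer in spirit to the rest of the paper, would approximate $B$ by a rescaled simple symmetric random walk, for which the level-by-level counts of up-steps before first passage to the target form an exact critical Galton--Watson process, with immigration on the interior levels $(-x,0)$ and none above the start, and then invoke convergence of rescaled critical Galton--Watson processes with and without immigration to $\mathrm{BESQ}_0$ and $\mathrm{BESQ}_2$; there the work shifts to tightness and convergence of finite-dimensional distributions of the occupation counts together with a discrete first-passage decomposition.
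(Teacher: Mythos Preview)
The paper does not prove this lemma: it is presented as a recalled classical result with references to \cite{Knight63, Ray63}, \cite[Chapter~VI]{RY99} and \cite{MR06}, so there is nothing in the paper's text to compare your argument against.

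Your approach is one of the standard proofs of Ray--Knight via Tanaka's formula and is correct in outline; the Tanaka identities you write are right, the drift is correctly read off as $\delta=2$ on $[-x,0]$ and $\delta=0$ on $[0,\infty)$, and you rightly isolate the space-variable martingale property of $a\mapsto N_a$ and its bracket as the substantive step. One technical slip to fix: the displayed identity $\langle N\rangle_{a'}-\langle N\rangle_a=\mathbb{E}\bigl[\int_a^{a'}L(y,T_{-x})\,dy\,\big|\,\mathcal{G}_a\bigr]$ is not correct as written. A conditional second-moment computation only gives $\mathbb{E}\bigl[(N_{a'}-N_a)^2\,\big|\,\mathcal{G}_a\bigr]=\mathbb{E}\bigl[\langle N\rangle_{a'}-\langle N\rangle_a\,\big|\,\mathcal{G}_a\bigr]$, which does not by itself pin down the bracket. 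The statement you actually need (and which is what feeds into Dambis--Dubins--Schwarz) is the pathwise identity $d[N]_a=L(a,T_{-x})\,da$, without any conditional expectation; this comes from recognising that for fixed $a<a'$ the time-stochastic-integral $N_{a'}-N_a=\int_0^{T_{-x}}1(a<B_s\le a')\,dB_s$ has time-quadratic-variation $\int_a^{a'}L(y,T_{-x})\,dy$ at $T_{-x}$, and then passing to the limit along partitions in $a$. Your final line $d\langle N\rangle_a=L(a,T_{-x})\,da$ is the right target, and with this correction the rest of the argument (DDS, strong uniqueness for \eqref{eq:SBsde}, and the identification $Q_2(0,x)\stackrel{d}{=}2x\gamma_1$) goes through. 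Your alternative discrete route via critical Galton--Watson processes with and without immigration is also a legitimate proof and is closer in spirit to Knight's original argument.
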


\subsection{Cox processes}
\label{sc23}

A point process is a right-continuous nonnegative integer-valued counting process. 
Let $X: = (X(t), \, t \ge 0)$ be a nonnegative stochastic process with right-continuous sample paths.
Call a point process $N: = (N(t), \, t \ge 0)$ a {\em Cox process driven by $X$} if $N$ and $X$ are defined on the same probability space, and conditionally given $X$ the process $N$ is a Poisson process with intensity measure $X(t)\, dt$.
Call $N$ a {\em Cox process} if $N$ has the same distribution as a process so constructed from some random intensity process $X$ on a suitable probability space.
The cumulative intensity process generated by $X$ is the continuous increasing process 
\begin{equation}
I(t): = \int_0^t X(s) \, ds \quad t \ge 0,
\end{equation}
which is assumed to be finite almost surely for each $t > 0$.

\quad Let $N_{\theta}$ denote a Cox process with intensity $\theta X$, so the conditional distribution of $N_{\theta}(t)$ given $X$ is Poisson with mean $\theta I(t)$.
Thus, by conditioning on $I(t)$, the probability generating function of $N_{\theta}(t)$ is given by
\begin{equation}
\label{eq:26}
\mathbb{E}z^{N_{\theta}(t)} = \psi(t, (1-z)\theta) \quad \mbox{for } 0 \le z \le 1,
\end{equation}
where for each $t > 0$ the function $\psi(t, \theta)$ is the Laplace transform of $I(t)$ with argument $\theta$:
\begin{equation}
\label{eq:LaplaceTI}
\psi(t, \theta): = \mathbb{E}e^{-\theta I(t)} \quad  \mbox{for } \theta > 0.
\end{equation}
By the uniqueness theorems for probability generating functions and Laplace transforms, 
this formula, and its straightforward extension to linear combinations of increments of $N_{\theta}(t)$ and $I(t)$, imply the well known fact \cite[Theorem 3.3]{Kal17} that the finite dimensional distributions of a Cox process $N$ determine those of its intensity process $X$, and vice versa. 
Let
\begin{equation}
\label{eq:28}
0 < T_{\theta,1} < T_{\theta,2} < \cdots \mbox{ with } \sum_{k \ge 1} 1(T_{\theta, k} \le t) = N_{\theta}(t) \quad \mbox{for } t \ge 0,
\end{equation}
be a listing of the points in $N_{\theta}$ in increasing order, with the convention that $T_{\theta, k} = \infty$ if $N_{\theta}(\infty) < k$.
Assume that $X$ is defined on a probability space supporting also a Poisson process $N(t)= \sum_{k \ge 1} 1(\gamma_k \le t )$ with rate $1$,
which is independent of $X$.
The $n^{th}$ point $\gamma_n$ of $N$ can be represented as $\gamma_n \stackrel{d}{=} \sum_{i = 1}^n \varepsilon_i$ for a sequence of i.i.d. standard exponential variables $\varepsilon_1, \varepsilon_2, \ldots$
Then the Cox process $N_{\theta}$ may be constructed as in \eqref{eq:28} from the points $T_{\theta,k}$ defined by
\begin{equation}
\label{eq:29}
\int_0^{T_{\theta,k}} X(s) \, ds = \frac{\gamma_k}{\theta} \quad \mbox{or} \quad T_{\theta,k} = I^{-1}\left(\frac{\gamma_k}{\theta} \right),
\end{equation}
where $(I^{-1}(t), \, t \ge 0)$ is the right-continuous inverse of the cumulative intensity process $I$.
The basic duality relation between the random time $T_{\theta, k}$ and the counting process $(N_{\theta}(t), \, t \ge 0)$ gives
\begin{equation*}
\mathbb{P}(T_{\theta,k} > t) = \mathbb{P}(N_{\theta}(t) < k) \quad \mbox{for } k = 0,1,\ldots \mbox{ and } t \ge 0.
\end{equation*}
Hence for each fixed $\theta > 0$, the distribution of the random sequence $(T_{\theta,k}, \, k = 1,2, \ldots)$ is determined by the family of finite dimensional distributions of $N_{\theta}$, and vice versa.
In particular, for $k = 1$ the evaluation of \eqref{eq:26} for $z = 0$ gives the basic formula
\begin{equation}
\label{eq:firsttail}
\mathbb{P}(T_{\theta,1} > t) = \mathbb{P}(N_{\theta}(t) = 0) = \psi(t,\theta).
\end{equation}

\quad The following lemma spells out some less well known formulas for Cox processes. 
\begin{lemma}
Let $N_{\theta}$ be a Cox process driven by $\theta X$, with points $T_{\theta,k}$, 
and $\psi(t, \theta)$ be defined by \eqref{eq:LaplaceTI}.
\begin{enumerate}[itemsep = 3 pt]
\item
For each fixed $t \ge 0$ the sequence of tail probabilities $\mathbb{P}(T_{\theta,k} > t)$ for $k = 1,2,\ldots$ is determined by the generating function
\begin{equation}
\label{eq:211}
\sum_{k = 1}^{\infty} \mathbb{P}(T_{\theta,k} > t) z^{k-1} = \frac{\psi(t,(1-z)\theta)}{1-z} \quad \mbox{for } |z| < 1.
\end{equation}
\item
For each real $r > 0$, there is the generating function for $r^{th}$ moments of $T_{\theta,k}$
\begin{equation}
\sum_{k = 1}^{\infty} \mathbb{E}T^r_{\theta,k} = (1-z)^{-1} \int_0^\infty rt^{r-1} \psi(t,(1-z)\theta) dt,
\end{equation}
where the identity holds for all $|z| \le \epsilon$ provided that one of the expressions is finite at $z = \epsilon$ for some $0 < \epsilon < 1$.
\item
For each $k \ge 1$ the joint distribution of $T_{\theta,k}$ and $X(T_{\theta,k})$ on the event $\{T_{\theta_k} < \infty\}$ is determined by the following formula
\begin{equation}
\mathbb{E}[g(X(T_{\theta,k})) 1(T_{\theta,k} \in dt)] = dt \frac{\theta^k}{(k-1)!} \mathbb{E}\left[X(t)g(X(t)) I(t)^{k-1} e^{-\theta I(t)}\right],
\end{equation}
for all nonnegative measurable functions $g$.
\item
The Laplace transform of $X(T_{\theta,k})$ restricted to $\{T_{\theta, k} \in dt\}$ for $0<t<\infty$ is given by
\begin{equation}
\label{eq:214}
\mathbb{E}e^{-\alpha X(T_{\theta,k})} 1(T_{\theta,k} \in dt) = dt \frac{\theta^k}{(k-1)!} \left( \frac{-d}{-d \alpha}\right) \left(\frac{-d}{d \theta} \right)^{k-1} \psi_2(t; \alpha, \theta),
\end{equation}
where $\psi_2(t; \alpha, \theta): = \mathbb{E}e^{-\alpha X(t) - \theta I(t)}$ is the bivariate Laplace transform of $X(t)$ and $I(t)$.
\item
Assume that $\mathbb{P}$ governs $X$ as a Markov process with homogeneous transition probabilities and with some arbitrary initial distribution.
Then for each $t \ge 0$ the sequence of tail probabilities of spacings between consecutive points of $N_\theta$ is determined by the generating function
\begin{equation}
\label{eq:215}
\scaleto{\sum_{k = 1}^\infty \mathbb{P}(T_{\theta, k} < \infty,  \, T_{\theta, k+1}  - T_{\theta, k} > t) z^{k-1} 
 = \frac{\mathbb{E}\psi_3(X(T_{(1-z)\theta,1}), t, \theta) 1(T_{(1-z)\theta,1} < \infty)}{1-z},}{27 pt}
\end{equation}
where $\psi_3(x,t,\theta):= \mathbb{E}\left(e^{-\theta I(t)} \,|\, X_0 = x\right)$, and the distribution of $X(T_{(1-z)\theta,1})$ on the event $\{T_{(1-z)\theta,1} < \infty\}$ is determined by either (3) or (4) above.
\end{enumerate}
\end{lemma}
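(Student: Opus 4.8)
The plan is to read off parts (1)--(2) from the duality between $N_\theta$ and the sequence $(T_{\theta,k})$, to obtain the distributional identity in (3) by conditioning on the whole path of $X$ and using the Poisson cumulative-intensity formula, to specialise (3) to get (4), and to prove (5) by combining the strong Markov property at $T_{\theta,k}$ with an independent-thinning identity for the rate parameter.

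For (1), I would start from the duality $\mathbb{P}(T_{\theta,k}>t)=\mathbb{P}(N_\theta(t)<k)=\sum_{j=0}^{k-1}\mathbb{P}(N_\theta(t)=j)$ recorded just before \eqref{eq:firsttail}, multiply by $z^{k-1}$, sum over $k\ge1$ and interchange the two nonnegative sums to get $(1-z)^{-1}\sum_{j\ge0}\mathbb{P}(N_\theta(t)=j)z^j=(1-z)^{-1}\mathbb{E}z^{N_\theta(t)}$, which is $\psi(t,(1-z)\theta)/(1-z)$ by \eqref{eq:26}. Part (2) follows by inserting the layer-cake identity $\mathbb{E}T_{\theta,k}^r=\int_0^\infty rt^{r-1}\mathbb{P}(T_{\theta,k}>t)\,dt$ (valid for $[0,\infty]$-valued $T_{\theta,k}$ and any $r>0$), multiplying by $z^{k-1}$, exchanging sum and integral by Tonelli for $z\in[0,1)$, and then quoting (1); the extension to $|z|\le\epsilon$ under the stated finiteness hypothesis is dominated convergence together with analyticity of both sides in $z$.

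The computational core is (3). Conditionally on the entire path of $X$, the process $N_\theta$ is a Poisson process with the continuous cumulative intensity $\theta I(\cdot)$, so to first order in $dt$
\[
\mathbb{P}(T_{\theta,k}\in dt\mid X)=\mathbb{P}\bigl(N_\theta(t-)=k-1,\ N_\theta \text{ has a point in }[t,t+dt)\,\big|\,X\bigr)=\frac{(\theta I(t))^{k-1}}{(k-1)!}\,e^{-\theta I(t)}\,\theta X(t)\,dt ,
\]
and on this event $X(T_{\theta,k})=X(t)$ is $\sigma(X)$-measurable; multiplying by $g(X(t))$ and taking expectations over $X$ gives the displayed formula. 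The same identity can alternatively be obtained from the construction \eqref{eq:29}, writing $\theta I(T_{\theta,k})=\gamma_k\sim\mathrm{Gamma}(k,1)$ independently of $X$ and changing variables $u=\theta I(s)$, $du=\theta X(s)\,ds$; continuity of $I$ and absolute continuity of $\gamma_k$ absorb the intervals on which $X$ vanishes. Part (4) is then immediate: put $g(x)=e^{-\alpha x}$ in (3) and recognise $\mathbb{E}[X(t)I(t)^{k-1}e^{-\alpha X(t)-\theta I(t)}]$ as one $\alpha$-derivative and $k-1$ $\theta$-derivatives (with the indicated signs) of $\psi_2(t;\alpha,\theta)=\mathbb{E}e^{-\alpha X(t)-\theta I(t)}$, differentiation under the expectation being legitimate by analyticity of Laplace transforms in the open half-plane.

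For (5), assume $X$ Markov with homogeneous transitions. First, $T_{\theta,k}$ is a stopping time for the filtration generated by $X$ together with the independent Poisson randomisation of \eqref{eq:29}, with respect to which $X$ remains strong Markov (automatic in the Feller setting relevant here); combining this with the conditional-Poisson structure of $N_\theta$ given $X$ shows that, on $\{T_{\theta,k}<\infty\}$, the conditional probability that $N_\theta$ puts no point in $(T_{\theta,k},T_{\theta,k}+t]$ equals $\mathbb{E}\bigl[e^{-\theta\int_0^t X(T_{\theta,k}+u)\,du}\mid X(T_{\theta,k})\bigr]=\psi_3(X(T_{\theta,k}),t,\theta)$, whence
\[
\mathbb{P}(T_{\theta,k}<\infty,\ T_{\theta,k+1}-T_{\theta,k}>t)=\mathbb{E}\bigl[1(T_{\theta,k}<\infty)\,\psi_3(X(T_{\theta,k}),t,\theta)\bigr].
\]
Second, for bounded measurable $h$ there is the identity
\[
\sum_{k\ge1}z^{k-1}\,\mathbb{E}\bigl[h(X(T_{\theta,k}))1(T_{\theta,k}<\infty)\bigr]=\frac{1}{1-z}\,\mathbb{E}\bigl[h(X(T_{(1-z)\theta,1}))1(T_{(1-z)\theta,1}<\infty)\bigr],
\]
which I would prove by independently thinning $N_\theta$ with retention probability $1-z$: the outcome is a Cox process driven by $(1-z)\theta X$, and conditionally on $(X,N_\theta)$ its first point coincides with the $k$-th point of $N_\theta$ (with the same spatial value) with probability $(1-z)z^{k-1}$ on $\{T_{\theta,k}<\infty\}$ and $0$ otherwise, so summing over $k$ gives the identity; alternatively substitute (3) into both sides and match the integrals using $\sum_k (z\theta I)^{k-1}/(k-1)!=e^{z\theta I}$. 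Taking $h=\psi_3(\cdot,t,\theta)$ and combining the two steps yields \eqref{eq:215}, with the law of $X(T_{(1-z)\theta,1})$ on $\{T_{(1-z)\theta,1}<\infty\}$ read off from (3) or (4) at $k=1$ with rate $(1-z)\theta$. I expect the main obstacle to be the measure-theoretic bookkeeping in the first step of (5): identifying the post-$T_{\theta,k}$ point process as an independent Cox process over the shifted path $X(T_{\theta,k}+\cdot)$ --- itself distributed as $X$ started from $X(T_{\theta,k})$ --- and tracking the defect mass carried on $\{T_{\theta,k}=\infty\}$; parts (1)--(4), by contrast, are routine applications of Tonelli, the Poisson intensity formula, and differentiation of Laplace transforms.
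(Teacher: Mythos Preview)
Your proof is correct, but the route differs slightly from the paper's. For (1), the paper multiplies both sides by $1-z$ and interprets the result as two evaluations of $\mathbb{P}(T_{\theta,G(z)}>t)$ for an independent geometric variable $G(z)$ with $\mathbb{P}(G(z)=k)=(1-z)z^{k-1}$: the left side conditions on $G(z)$, while the right side uses $\gamma_{G(z)}\stackrel{d}{=}\gamma_1/(1-z)$ together with \eqref{eq:29} to get $T_{\theta,G(z)}\stackrel{d}{=}T_{(1-z)\theta,1}$ and then \eqref{eq:firsttail}. Your direct summation via the duality and \eqref{eq:26} is of course equivalent and arguably more elementary. For (5) the paper reuses the same geometric randomisation, applying the Markov property of $X$ once at $T_{(1-z)\theta,1}$; your two-step argument (strong Markov at each $T_{\theta,k}$, then independent thinning to collapse the sum) is the same idea in different clothing, since thinning with retention probability $1-z$ produces a first retained point whose index is exactly $G(z)$. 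The paper dismisses (2)--(4) as ``easily checked by differentiating or integrating \eqref{eq:211} and \eqref{eq:215}''; your explicit conditioning proof of (3) is more informative than this remark, since (3) does not literally follow by differentiating (1), and your derivation makes clear where the factor $X(t)$ comes from. The paper's packaging has the virtue of unifying (1) and (5) under a single geometric-index device; yours has the virtue of being self-contained at each step.
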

\begin{proof}
The key to the generating function identity \eqref{eq:211} is to observe that after multiplying both sides by $(1-z)$,
this identity gives two different expressions for $\mathbb{P}(T_{\theta, G(z)} > t)$, where $G(z)$ is a random variable independent of $X$ and $N_{\theta}$, with the geometric distribution $\mathbb{P}(G(z) = k) = z^{k-1}(1-z)$ for $k \ge 1$.
On the left side this probability is computed by conditioning on $G(z)$.
On the right side it is recomputed using the well known fact that $\gamma_{G(z)} \stackrel{d}{=} \frac{\gamma_1}{1-z}$,
which implies via \eqref{eq:29} that $T_{\theta, G(z)} \stackrel{d}{=} T_{(1-z)\theta ,1}$.
The identity \eqref{eq:215} is proved in the same way using the Markov property of $X$ at time $T_{(1-z)\theta, 1}$ to assist the evaluation on the right side.
The remaining identities are easily checked by differentiating or integrating \eqref{eq:211} and \eqref{eq:215}.
\end{proof}

\section{The Laplace walk stopped at the first descending ladder time}
\label{sc3}

\quad In this section we study the order statistics of the symmetric Laplace walk stopped at the first descending ladder time, which relies on Le Gall's branching description \cite{LeGall89} of random walks.

\quad It is assumed throughout that we are working on the event of probability one that the values of $S_k$ are all distinct.
Since the the increment distribution of the random walk $S$ is symmetric and continuous, the laws of strictly/weakly ascending/descending ladder times are identical, so we will not distinguish between strict and weak ladder times.
The next lemma recalls from \cite{LeGall89} some known distributional properties of a symmetric Laplace walk prior to the first descending ladder time.

\begin{lemma}
\label{lem:Laplacedes}
Let $(S_k, \, k \ge 0)$ be a random walk with i.i.d. symmetric Laplace increments $X_1, X_2, \ldots$,
and $\tau^-: = \inf\{k: S_k < 0\}$ be the first descending ladder time of $S$.
Then
\begin{enumerate}[itemsep = 3 pt]
\item
The probability generating function of $\tau^-$ is 
\begin{equation}
\label{eq:PGFtau}
\mathbb{E}z^{\tau^{-}} = 1 - \sqrt{1-z}.
\end{equation}
\item
$\tau^{-}$ and $S_{\tau^{-}}$ are independent, with $S_{\tau^{-}} \stackrel{d}{=} \varepsilon_1$ the standard exponential distribution.
\item
Conditionally given the event $\{\tau^- > 1\}$, let $\alpha$ be the unique random time at which $S$ attains its minimum on 
$1, \ldots,\tau^{-} - 1$.
Then $(S_{\alpha + k} - S_{\alpha}, \, 0 \le k \le \tau^- - \alpha)$ and $(S_{\alpha - k} - S_{\alpha}, \, 0 \le k \le \alpha)$
are two independent copies of $(S_k, \, 0 \le k \le \tau^-)$.
\end{enumerate}
\end{lemma}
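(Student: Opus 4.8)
The plan is to handle the three parts in order, reusing part (i) inside part (iii); only parts (ii) and (iii) actually use the Laplace form.

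\emph{Part (i).} Here the special shape of the density is irrelevant. Since the increment law is symmetric with a continuous density, Sparre Andersen's theorem gives $\mathbb{P}(\tau^->n)=\mathbb{P}(S_1\ge 0,\dots,S_n\ge 0)=\binom{2n}{n}2^{-2n}=:u_n$, the same $u_n$ that appears elsewhere in the paper. Taking first differences, $\mathbb{P}(\tau^-=n)=u_{n-1}-u_n$, and then summing against $z^n$ while using $\sum_{n\ge 0}u_nz^n=(1-z)^{-1/2}$ yields $\mathbb{E}z^{\tau^-}=z(1-z)^{-1/2}-\big((1-z)^{-1/2}-1\big)=1-\sqrt{1-z}$.

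\emph{Part (ii).} I would condition on the stopped $\sigma$-field $\mathcal{F}_{\tau^--1}$ together with the value $\tau^-=n$. On $\{\tau^-=n\}$ the position $y:=S_{n-1}\ge 0$ is known and $\{S_n<0\}=\{X_n<-y\}$. Because the left tail of the Laplace density is exactly exponential, $\mathbb{P}(X_n<-y-t\mid X_n<-y)=e^{-t}$ for every $y\ge 0$; hence the conditional law of $-S_n=-(y+X_n)$ given $\{X_n<-y\}$ is standard exponential and does not depend on $y$ or on $n$. It follows that $-S_{\tau^-}$ is standard exponential and independent of $(S_0,\dots,S_{\tau^--1})$, in particular of $\tau^-$.

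\emph{Part (iii).} This is the delicate point. I would fix $n\ge 2$ and $m\in\{1,\dots,n-1\}$, condition on $\{\tau^-=n,\ \alpha=m\}$, and split the increments into the left block $(X_1,\dots,X_m)$ and the right block $(X_{m+1},\dots,X_n)$, which are independent before conditioning. The post-minimum piece is the forward walk generated by the right block; reading backward from time $\alpha$, the pre-minimum piece is the walk generated by $(-X_m,\dots,-X_1)$, which by symmetry of the increment law is again a symmetric Laplace walk. The shape constraint ``$\ge 0$ until the last step, then $<0$'' picks out, for the right block, exactly the event that its walk first goes negative at time $n-m$, and for the reversed left block the event that its walk first goes negative at time $m$; these events live on disjoint blocks of i.i.d.\ increments. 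The only coupling between the blocks is the original requirement $S_n<0$, which forces the terminal drop of the right piece to land below $-S_m$ rather than merely below $0$; here I would invoke the memoryless property of the Laplace law to peel off this extra amount and recover the unbiased killed-walk law for the right piece. This overshoot bookkeeping is the step I expect to be the main obstacle. Once the conditional law of the pair of pieces is seen to factor as a product of two killed-walk laws, mixing over $(n,m)$ and using part (i) to recognise the joint law of $(\alpha,\tau^--\alpha)$ as that of two i.i.d.\ copies of $\tau^-$ completes the argument. Alternatively, one may simply appeal to Le Gall's combinatorial construction \cite{LeGall89}.
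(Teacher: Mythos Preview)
The paper does not prove this lemma; it is simply recalled from Le Gall \cite{LeGall89}. Your arguments for (i) and (ii) are correct and more explicit than anything the paper offers. Your direct approach to (iii), however, has a genuine gap at precisely the step you flag as the main obstacle. After you peel off the overshoot below zero via the memoryless property, the terminal value of the right piece becomes $B_{n-m}=A_m-\varepsilon'$ with $\varepsilon'$ a fresh standard exponential; this is still coupled to the left piece through $A_m=-S_m$, so the two \emph{full} pieces are not independent. In fact the literal statement of (iii) cannot hold for the overshoot values: if the terminal values $(-S_\alpha,\,S_{\tau^-}-S_\alpha)$ were i.i.d.\ with the law of $S_{\tau^-}$, their difference $-S_{\tau^-}$ would have a symmetric Laplace distribution, contradicting part (ii). Equivalently, $S_\alpha=M_1$ would be $\mathrm{Exp}(1)$, whereas the paper itself shows $M_1\stackrel{d}{=}\tfrac12\varepsilon_1$.

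What your computation \emph{does} yield, once the integrals over $X_1$ and $X_n$ are carried out, is that the \emph{positive parts} of the two pieces, $(S_{\alpha+k}-S_\alpha)_{0\le k<\tau^--\alpha}$ and $(S_{\alpha-k}-S_\alpha)_{0\le k<\alpha}$, are conditionally on $\{\tau^->1\}$ two independent copies of $(S_k)_{0\le k\le\tau^--1}$: the joint density on $\{\tau^-=m+r,\,\alpha=m\}$ factors as $\tfrac12\cdot\big[\tfrac12 e^{-a_{m-1}}\big]\big[\tfrac12 e^{-b_{r-1}}\big]$ times the free-walk densities of the two positive fragments, and the prefactor $\tfrac12=\mathbb{P}(\tau^->1)$ is exactly the normalisation. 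This is the version of Le Gall's result actually used downstream (for instance in Lemma~\ref{lem:branching}, where only the order statistics $M_1<\cdots<M_\nu$ of positive values enter), and it is equivalent to the stated (iii) once each positive fragment is decorated with a \emph{fresh} independent exponential overshoot rather than the original one. So your strategy is sound, but the endpoint you are aiming at needs this adjustment.
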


\quad We consider the point process with $(1+\tau^{-})$ points at $\{0 = S_0, S_1, \ldots, S_{\tau^{-}}\}$.
It is easy to see that $S_{\tau^{-}} < 0$ is the smallest point, and $S_{0} = 0$ is the second smallest point of $\{S_0, \ldots, S_{\tau^{-}}\}$.
By Lemma \ref{lem:Laplacedes}, the exponential overshoot $-S_{\tau^{-}}$ is independent of $(S_k, \, 0 \le k \le \tau^{-}-1)$, 
hence also independent of the remaining order statistics of $\{S_0,\ldots, S_{\tau^{-}-1}\}$.
To describe the distribution of these remaining order statistics, observe first that
\begin{equation*}
\mathbb{P}(\tau^{-} = 1) = \mathbb{P}(\tau^{-} > 1) = \frac{1}{2}.
\end{equation*}

Given $\tau^{-} = 1$, there is two-point configuration
$(S_0, S_1 \,| \,\tau^{-} = 1) = (S_0, S_1 \,| \,S_1 < 0) \stackrel{d}{=} (-\varepsilon_1, 0)$,
which is of little interest. 
So we will focus on the distribution of $\{S_0, \ldots, S_{\tau^{-}-1}\}$ after conditioning on the event $\{\tau^{-} > 1\}$.

\quad To simplify notation, let $\nu: = \tau^{-}-1$, and on the conditional probability space $\{\nu \ge  1\}$ 
let $M_k: = M_k(\{S_0, \ldots, S_{\nu}\})$ for $0 \le k \le \nu$ so that
\begin{equation*}
M_0 = 0 < M_1 < \cdots < M_{\nu},
\end{equation*}
are order statistics of $\{0 = S_0, S_1, \ldots, S_{\tau^{-}-1}\}$ given $\tau^{-} > 1$.
The following lemma provides a recursive description of the order statistics $(M_1, \ldots, M_{\nu})$.

\begin{lemma}
\label{lem:branching}
Conditioned on the event $\{\nu \ge 1\}$:
\begin{enumerate}[itemsep = 3 pt]
\item
The probability generating function of $\nu$ is given by
\begin{equation}
\label{eq:PGFnu}
\mathbb{E}(z^{\nu} \, |\, \nu \ge 1) = \frac{2}{z}\bigg(1 - \sqrt{1-z} - \frac{z}{2} \bigg).
\end{equation}
\item
The distribution of $M_1$ is exponential with mean $1/2$, i.e. $M_1 \stackrel{d}{=} \frac{1}{2} \varepsilon_1$.
Moreover, independent of $M_1$, the configuration of the remaining points relative to $M_1$
\begin{equation*}
\{M_2 - M_1, \ldots, M_{\nu} -M_1\}
\end{equation*}
has a distribution which is a mixture of three cases:
\begin{enumerate}[itemsep = 3 pt]
\item[(i)]
empty with probability $1/4$;
\item[(ii)]
equal in distribution to $\{M_1, \ldots, M_{\nu}\}$ with probability $1/2$;
\item[(iii)]
equal in distribution to the union of two independent copies of $\{M_1, \ldots, M_{\nu}\}$ with probability $1/4$.
\end{enumerate}
\end{enumerate}
\end{lemma}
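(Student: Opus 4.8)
The plan is to derive both parts of Lemma \ref{lem:branching} from Lemma \ref{lem:Laplacedes}, using the memoryless property of the Laplace increments at the first renewal time of the walk together with the path-decomposition at the minimum recorded in Lemma \ref{lem:Laplacedes}(3). For part (1), I would start from the generating function $\mathbb{E}z^{\tau^-} = 1 - \sqrt{1-z}$ in \eqref{eq:PGFtau} and the identity $\mathbb{P}(\tau^- = 1) = \mathbb{P}(\tau^- > 1) = \tfrac12$. Writing $\nu = \tau^- - 1$ and splitting on $\{\tau^- = 1\}$ versus $\{\tau^- > 1\}$ gives $\mathbb{E}z^{\tau^-} = \tfrac12 z + \tfrac12 \, \mathbb{E}(z^{\tau^-}\mid \tau^- > 1)$, hence $\mathbb{E}(z^\nu \mid \nu \ge 1) = z^{-1}\,\mathbb{E}(z^{\tau^-}\mid \tau^- > 1) = z^{-1}\big(2(1-\sqrt{1-z}) - z\big)$, which is exactly \eqref{eq:PGFnu}. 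This step is routine.

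For part (2), the key observation is that on $\{\tau^- > 1\}$ the first step $X_1$ is strictly positive, and conditioning a symmetric Laplace variable on being positive yields an exponential variable with mean $1$; more carefully, one should track the smallest order statistic of $\{S_0 = 0, S_1, \ldots, S_\nu\}$. I would argue as follows. Apply Lemma \ref{lem:Laplacedes}(3): conditioned on $\{\tau^- > 1\}$, if $\alpha$ is the (a.s. unique) time at which $S$ attains its minimum $S_\alpha$ over $\{1,\ldots,\tau^- - 1\}$, then the pre-$\alpha$ and post-$\alpha$ pieces are independent copies of $(S_k, \, 0 \le k \le \tau^-)$ shifted to start at $S_\alpha$. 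This decomposes $\{S_0,\ldots,S_\nu\}$ into the point $S_\alpha$ together with two independent families, each of which (after subtracting $S_\alpha$) is a fresh copy of $\{0 = S_0, S_1, \ldots, S_{\tau^-}\}$ — but the leftmost point $S_{\tau^-} < 0$ of each such fresh copy gets reinterpreted. The bookkeeping is: the overall minimum $M_0 = 0$ of $\{S_0,\ldots,S_\nu\}$ is $S_0$ itself (the walk starts at $0$ and stays $\ge 0$ until $\tau^-$), so $M_1 = M_1(\{S_0,\ldots,S_\nu\})$ is the value of $S_\alpha$ relative to $0$, i.e. $M_1 = S_\alpha$; and by the independence in Lemma \ref{lem:Laplacedes}(2)–(3) together with the memoryless property, $S_\alpha$ is distributed as an exponential variable — I expect mean $1/2$ after correctly accounting that $S_\alpha$ is the minimum of two independent exponential overshoots, each of mean $1$, so $S_\alpha \stackrel{d}{=} \min(\varepsilon, \varepsilon') \stackrel{d}{=} \tfrac12 \varepsilon_1$. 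Once $M_1 = S_\alpha$ is removed, the remaining configuration $\{M_2 - M_1, \ldots, M_\nu - M_1\}$ is precisely the union of the two independent shifted pieces with their leftmost points (the former overshoots) stripped off, each piece thus contributing an independent copy of $\{0, S_1 - S_\alpha \text{(the new origin)}, \ldots\}$ — i.e. an independent copy of the configuration $\{M_0, M_1, \ldots, M_\nu\}$ on $\{\nu \ge 1\}$, but each present only on the event that the corresponding sub-walk has $\tau^- > 1$ (probability $1/2$), and empty otherwise. The three cases (i), (ii), (iii) are then read off by independently flipping two fair coins for whether each of the two pieces is nonempty: both empty with probability $1/4$, exactly one nonempty with probability $1/2$, both nonempty with probability $1/4$, matching the stated mixture.

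The main obstacle, and where I would spend the most care, is the bookkeeping at the minimum time $\alpha$: one must check that $S_\alpha$ really is the value $M_1$ (not $M_0$ or some other order statistic), that the two sub-configurations obtained from Lemma \ref{lem:Laplacedes}(3) after subtracting $S_\alpha$ are genuinely i.i.d. copies of the full configuration $\{S_0,\ldots,S_{\tau^-}\}$ including the negative overshoot, and that the overshoot points of these sub-walks are exactly the two copies of $S_\alpha$ that collapse onto the single point $M_1$, so that deleting them leaves $\{M_2 - M_1, \ldots, M_\nu - M_1\}$. The probability weights $1/4, 1/2, 1/4$ come from the fact that, conditionally, each sub-walk independently has $\tau^- = 1$ or $\tau^- > 1$ with probability $1/2$ each, and a sub-walk with $\tau^- = 1$ contributes no points to $\{M_2 - M_1, \ldots\}$ while one with $\tau^- > 1$ contributes a copy of $\{M_1,\ldots,M_\nu\}$. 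I would also double-check the mean-$1/2$ claim for $M_1$ either directly from this $\min$-of-two-exponentials description or by extracting it from the generating function \eqref{eq:PGFnu} via a marginal Laplace-transform computation, to make sure the two routes agree.
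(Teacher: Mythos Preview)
Your approach matches the paper's: part (1) via the generating function of $\tau^-$, and part (2) via the split at the minimum $\alpha$ from Lemma \ref{lem:Laplacedes}(3) together with the memoryless property, producing two independent Bernoulli$(1/2)$ fragments and hence the $\tfrac14,\tfrac12,\tfrac14$ mixture. This is the same outline the paper gives (citing Le Gall for part (2)).

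The one place that needs tightening is the claim $S_\alpha\stackrel{d}{=}\min(\varepsilon,\varepsilon')$. If Lemma \ref{lem:Laplacedes}(3) is read as saying the \emph{ordered} pair (pre-piece, post-piece) is i.i.d., then the overshoot of the reversed pre-piece is $|S_0-S_\alpha|=S_\alpha$ itself, which would marginally be a \emph{single} standard exponential with mean $1$, not a minimum. In fact the pre-piece overshoot $S_\alpha$ is \emph{always} strictly smaller than the post-piece overshoot $S_\alpha-S_{\tau^-}$ (since $S_{\tau^-}<0$), so the ordered pair cannot be i.i.d.; the correct reading is that the \emph{unordered} pair of pieces has the law of two i.i.d.\ copies, equivalently the two overshoots are jointly the order statistics of two independent standard exponentials. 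Then $S_\alpha$, being deterministically the smaller overshoot, is the minimum of two independent exponentials and hence exponential with mean $1/2$, as you want. Your heuristic and conclusion are right, and you correctly flag this bookkeeping as the delicate step; but without making the ordered-versus-unordered point explicit, a reader applying the lemma as written would arrive at mean $1$ rather than $1/2$.
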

\begin{proof}
For part (1), the probability generating function \eqref{eq:PGFnu} is easily derived from \eqref{eq:PGFtau} by noting that 
$\nu \stackrel{d}{=} (\tau^{-} - 1 \, | \, \tau^{-} > 1)$.
Part (2) is read from Le Gall \cite[Remarque, p.261]{LeGall89}.
It is a consequence of Lemma \ref{lem:Laplacedes} and the memoryless property of the exponential distribution.
Lemma \ref{lem:Laplacedes} gives the branching probabilities $1/4$, $1/2$, $1/4$ according to whether the split at the minimum produces $0, 1$ or $2$ non-trivial fragments, with one Bernoulli$(1/2)$ trial for the fragment before the minimum, and an independent Bernoulli$(1/2)$ trial for the fragment after the minimum. 
The point configuration $\{M_2 - M_1, \ldots, M_{\nu} -M_1\}$ is then seen to be the superposition of a binomial$(2, 1/2)$ number of independent copies of the original configuration $\{M_1, \ldots, M_{\nu}\}$.
\end{proof}

\quad Next we consider the counting process $(N_{\tiny \mbox{des}}(v), \, v \ge 0)$ defined by \eqref{eq:Ndes}.
Using the notations in this section, we write
$N_{\tiny \mbox{des}}(v)=  \sum_{k =1}^{\nu} 1(S_k \le v) = \sum_{k =1}^{\nu} 1(M_k \le v)$.
The idea is to connect the process $(N_{\tiny \mbox{des}}(v), \, v \ge 0)$ with 
\begin{equation}
N_{\tiny \mbox{exc}}(v):= \sum_{k = 1}^{\nu} 1(S_{k-1} \le v, \, S_k > v, \, k \le \nu),
\end{equation}
which is the number of excursions of the path $(S_k, \, 0 \le i \le \nu)$ above level $v$.
As indicated by Le Gall \cite[Remarque ($ii$), p.266]{LeGall89},
it follows from Lemma \ref{lem:branching} that 
the upcrossing counting process $(N_{\tiny \mbox{exc}}(v), \, v \ge 0)$ represents numbers of excursions in a critical binary branching process,
starting with $N_{\tiny \mbox{exc}}(0) = 1$, in which each excursion
\begin{itemize}[itemsep = 3 pt]
\item
splits in two at rate $2 \times 1/4 = 1/2$,
\item
dies at rate $2 \times 1/4 = 1/2$. 
\end{itemize}
Moreover, along the branches of this tree of excursions, there is a Poisson process of marks  at rate $2 \times 1/2 = 1$, corresponding to levels $M_k$ which have exactly one child.
These levels $M_k$ are not noticed as jumps of the process $(N_{\tiny \mbox{exc}}(v), \, v \ge 0)$.
This description is summarized in the following theorem.

\begin{theorem}
\label{thm:bdcox}
Conditioned on the event $\{\nu \ge 1\}$, there is the identity in law of counting processes
\begin{equation}
\label{eq:excbd}
(N_{\tiny \mbox{des}}(v), \, v \ge 0 \, | \, \nu \ge 1) \stackrel{d}{=} (N_{\tiny \mbox{count}}(v), \, v \ge 0),
\end{equation}
where $(N_{\tiny \mbox{count}}(v), \, v \ge 0)$ is constructed as follows.
Let $(N_{\pm}(v), \, v \ge 0)$ be a birth and death process with state space the nonnegative integers,
\begin{itemize}[itemsep = 3 pt]
\item
initial state $N_{\pm}(0) = 1$,
\item
transitions $j \to j+1$ at rate $j/2$, and $j \to j-1$ at rate $j/2$.
\end{itemize}
Write $N_{\pm}(v) = 1 + N_{\tiny \mbox{births}}(v) - N_{\tiny \mbox{deaths}}(v)$,
where $N_{\tiny \mbox{births}}(v)$ and $N_{\tiny \mbox{deaths}}(v)$ are the increasing processes counting numbers of births and deaths in $(N_{\pm}(v), \, v \ge 0)$.
Then $(N_{\tiny \mbox{count}}(v), \, v \ge 0)$ is constructed as
\begin{equation}
N_{\tiny \mbox{count}}(v) = N_{\tiny \mbox{births}}(v) + N_{\tiny \mbox{deaths}}(v) + N_{\tiny \mbox{marks}}(v), \quad v \ge 0,
\end{equation}
where $(N_{\tiny \mbox{marks}}(v), \, v \ge 0)$ is a Cox process with intensity $(N_{\pm}(v), \, v \ge 0)$.
\end{theorem}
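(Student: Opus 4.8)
The plan is to make Le Gall's observation precise by unfolding the one-step recursion of Lemma~\ref{lem:branching} into an explicit random tree and then reading off $N_{\tiny \mbox{des}}$ and $N_{\tiny \mbox{exc}}$ from that tree. Iterating the distributional identity of Lemma~\ref{lem:branching}(2) builds, on the event $\{\nu \ge 1\}$, a random rooted tree $\mathcal{T}$ in which each node $u$ carries an independent $\mathrm{Exp}(2)$ length $E_u$ on the edge above it and an independent number of children in $\{0,1,2\}$ with probabilities $\tfrac14,\tfrac12,\tfrac14$; the root has value $0$ and the value of a node is that of its parent plus its edge-length. Since the offspring law has mean $1$ and positive variance, $\mathcal{T}$ is a.s.\ finite, consistently with $\mathbb{E}(z^{\nu}\,|\,\nu\ge1)\to 1$ as $z\uparrow 1$ in \eqref{eq:PGFnu}, and the multiset of node values other than the root is a copy of $\{M_1,\dots,M_\nu\}$ given $\nu\ge1$. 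Hence $N_{\tiny \mbox{des}}(v)$ is the number of nodes of $\mathcal{T}$ of value $\le v$, while --- matching the branching of $\mathcal{T}$ with the recursive splitting of the walk at successive minima via Lemma~\ref{lem:Laplacedes}(3) --- $N_{\tiny \mbox{exc}}(v)$ is the number of edges of $\mathcal{T}$ straddling level $v$.

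Next I would identify $v\mapsto N_{\tiny \mbox{exc}}(v)$ with the birth-and-death process $N_{\pm}$. When $N_{\tiny \mbox{exc}}(v)=j$ there are $j$ edges of $\mathcal{T}$ crossing level $v$, and by lack of memory their residual lengths above $v$ are i.i.d.\ $\mathrm{Exp}(2)$, independent of the part of $\mathcal{T}$ below $v$; so the lowest of the $j$ upper endpoints lies at level $v+\mathrm{Exp}(2j)$, is a uniformly chosen one of the $j$ edges, and is a node with $0,1,2$ children with probabilities $\tfrac14,\tfrac12,\tfrac14$, after which the new edges start afresh. Thus $N_{\tiny \mbox{exc}}$ makes transitions $j\to j\pm1$ each at rate $2j\cdot\tfrac14=j/2$, with $N_{\tiny \mbox{exc}}(0)=1$ on $\{\nu\ge1\}$ (since $S_0=0<S_1$ and $S_k>0$ for $1\le k\le\nu$), which is exactly the law of $N_{\pm}$; and the one-child nodes, i.e.\ the levels $M_k$ invisible to $N_{\tiny \mbox{exc}}$, occur at rate $2j\cdot\tfrac12=j$ while $N_{\pm}=j$.

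To see that these marks form a Cox process of intensity $N_{\pm}$ I would thin: on a maximal level-interval where $N_{\pm}\equiv j$, the stream of node-events along the $j$ active edges is Poisson of rate $2j$, split independently into one-child marks (rate $j$) and $\pm1$ transitions (rate $j$), the right endpoint of the interval being the first transition event and its sign an independent fair coin; conditioning on the interval and on $N_{\pm}$ therefore leaves the marks Poisson of rate $j=N_{\pm}$ there, and distinct intervals are independent by the level-Markov property of $\mathcal{T}$. Classifying the nodes of $\mathcal{T}$ of value $\le v$ as leaves, binary nodes, or one-child nodes then gives $N_{\tiny \mbox{des}}(v)=N_{\tiny \mbox{deaths}}(v)+N_{\tiny \mbox{births}}(v)+N_{\tiny \mbox{marks}}(v)=N_{\tiny \mbox{count}}(v)$, which is \eqref{eq:excbd}. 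I expect the main obstacle to be the rigorous passage from the one-step recursion of Lemma~\ref{lem:branching} to the full tree $\mathcal{T}$ and to the conditional-independence (strong level-Markov) statement used in the second step; this requires either a careful exploration argument or an appeal to the standard correspondence between recursive distributional identities and Galton--Watson trees.
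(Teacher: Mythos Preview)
Your proposal is correct and follows essentially the same route as the paper: iterate Lemma~\ref{lem:branching} to build the branching tree of excursions, identify $N_{\tiny \mbox{exc}}$ with the birth--death process $N_{\pm}$ via the rates $2j\cdot\tfrac14=j/2$, and recognize the one-child levels as a Cox process of marks at rate $2j\cdot\tfrac12=j$. The paper's own proof is just a one-paragraph summary of this same argument (``this follows from the preceding discussion using the basic jump-hold descriptions\dots''), so your write-up is a faithful and more explicit unpacking of it; the caveat you flag about making the passage from the one-step recursion to the full tree rigorous is a point the paper simply absorbs into its reference to Le~Gall.
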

\begin{proof}
This follows from the preceding discussion using the basic jump-hold descriptions of birth and death, Poisson and Cox processes.
In particular, the birth and death process $(N_{\pm}(v), \, v \ge 0)$ is constructed so that it has the same distribution as 
$(N_{\tiny \mbox{exc}}(v), \, v \ge 0)$.
\end{proof}

\quad It can be easily derived from Theorem \ref{thm:bdcox} a branching description of $(N_{\tiny \mbox{des}}(v), \, v \ge 0 )$ without conditioning on the event $\{ \nu \ge 1\}$. 
The result will be spelled out in Corollary \ref{coro:bdcox} with further discussions.
Here we give another corollary of Theorem \ref{thm:bdcox}, which does not seem to be obvious without a careful accounting of the distribution of the point process $\{0 = M_0, M_1, \ldots, M_{\nu}\}$.

\begin{corollary}
\label{coro:spacingrev}
Conditioned on the event $\{\nu \ge 1\}$, the spacings $\Delta_k: = M_k - M_{k-1}$, $1 \le k \le \nu$ between points in the range of the stopped Laplace walk are reversible.
That is, for each $n = 1,2, \ldots$,
\begin{equation}
(\Delta_1, \ldots, \Delta_n \,| \, \nu = n) \stackrel{d}{=} (\Delta_n, \ldots, \Delta_1 \,| \, \nu = n),
\end{equation}
and hence for all $1 \le m \le n$,
\begin{equation}
(\Delta_1, \ldots, \Delta_m \,| \, \nu \ge  n) \stackrel{d}{=} (\Delta_{\nu}, \ldots, \Delta_{\nu-m+1} \,| \, \nu \ge n),
\end{equation}
\end{corollary}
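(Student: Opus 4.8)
The plan is to deduce the reversibility from the birth--death-with-marks description of $(N_{\mathrm{des}}(v),\,v\ge 0\mid\nu\ge 1)$ provided by Theorem~\ref{thm:bdcox}, by showing that the embedded jump chain driving that description is a reversible random path once its endpoints are fixed. First I would set up notation: realise the conditioned process as $(N_{\mathrm{count}}(v),\,v\ge 0)$, with points $M_1<\dots<M_\nu$ and $\nu=N_{\mathrm{count}}(\infty)$, and let $Y_k$ be the value of the intensity chain $N_{\pm}$ on the open interval between its $k$-th and $(k+1)$-st points, so $Y_0=N_{\pm}(0)=1$. Since $N_{\pm}$ is constant between points, from a state $Y_k=j\ge1$ the next point of $N_{\mathrm{count}}$ arrives at total rate $2j$, being a birth, a death or a Cox mark with respective probabilities $\tfrac14,\tfrac14,\tfrac12$, and $\nu=\inf\{k:Y_k=0\}$. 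Thus $(Y_k)_{k\ge0}$ is a Markov chain on $\{0,1,2,\dots\}$ absorbed at $0$, started at $1$, with $p(j,j\pm1)=\tfrac14$ and $p(j,j)=\tfrac12$ for $j\ge1$; and by the jump--hold description of this process, conditionally on $(Y_k)_{k\ge0}$ the spacings $\Delta_k=M_k-M_{k-1}$, $1\le k\le\nu$, are independent with $\Delta_k$ exponential of rate $2Y_{k-1}$.

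The key step is to identify the conditional law of the chain given $\{\nu=n\}$. The $n$-th step must be a death from state $1$, so $\{\nu=n\}$ forces $Y_0=Y_{n-1}=1$ and $Y_i\ge1$ for $0\le i\le n-1$; call such a sequence $(y_0,\dots,y_{n-1})$ admissible. On admissible sequences, $\mathbb P\big((Y_0,\dots,Y_{n-1})=(y_0,\dots,y_{n-1}),\,\nu=n\big)=p(y_{n-1},0)\prod_{i=0}^{n-2}p(y_i,y_{i+1})=\tfrac14\prod_{i=0}^{n-2}p(y_i,y_{i+1})$. Because an admissible path runs from $1$ to $1$ with steps in $\{-1,0,+1\}$, it has equally many $+1$ and $-1$ steps, and since up- and down-steps carry weight $\tfrac14$ while flat steps carry $\tfrac12$, the product equals $\big(\tfrac14\big)^{2u}\big(\tfrac12\big)^{\,n-1-2u}$ where $u$ is the number of up-steps; in particular it depends on the path only through $u$. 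The reversal $(y_0,\dots,y_{n-1})\mapsto(y_{n-1},\dots,y_0)$ is a bijection of the admissible sequences onto themselves --- the endpoints agree and the constraint that $y_i\ge1$ for every $i$ is symmetric under reversal --- and it interchanges up- and down-steps, hence fixes $u$ and preserves the conditional law. Since each $\Delta_k$ depends on the chain only through $Y_{k-1}$ and the $\Delta_k$ are conditionally independent exponentials, reversing the chain reverses the spacing vector, so $(\Delta_1,\dots,\Delta_n\mid\nu=n)\stackrel{d}{=}(\Delta_n,\dots,\Delta_1\mid\nu=n)$. The second assertion follows by writing $\{\nu\ge n\}=\bigsqcup_{\ell\ge n}\{\nu=\ell\}$ and applying the first assertion on each $\{\nu=\ell\}$, since for $m\le\ell$ the first $m$ coordinates of a reversible length-$\ell$ vector have the same law as its last $m$ coordinates taken in reverse order; all the conditionings are on events of positive probability by \eqref{eq:PGFnu}.

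The hard part will be the bookkeeping in the first step, namely extracting cleanly from Theorem~\ref{thm:bdcox} that the Cox marks act as self-loops of $N_{\pm}$ that nevertheless contribute points of $N_{\mathrm{des}}$ at rate $N_{\pm}$, so that the inter-point spacings are exponential of rate $2Y_{k-1}$ and, crucially, the conditional law of $(Y_0,\dots,Y_{n-1})$ given $\nu=n$ is a measure on admissible sequences with the reversal-invariant weight $\big(\tfrac14\big)^{2u}\big(\tfrac12\big)^{\,n-1-2u}$. Once this is in place the reversal symmetry, and hence the corollary, is immediate; equivalently, the statement says that given $\nu=n$ the configuration $\{0=M_0,M_1,\dots,M_n\}$ is invariant in law under reflection about the point $M_n/2$.
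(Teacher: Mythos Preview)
Your proof is correct and follows essentially the same approach as the paper: both deduce the reversibility from the birth--death-with-marks description of Theorem~\ref{thm:bdcox}, the paper by invoking the well-known reversibility of a birth--death process run between equal endpoints (and noting that adding Cox marks driven by a reversible process preserves reversibility), while you make this explicit by passing to the embedded discrete chain $(Y_k)$ and verifying directly that the path weight $(\tfrac14)^{2u}(\tfrac12)^{n-1-2u}$ is reversal-invariant. Your version is more self-contained but otherwise the same argument.
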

\begin{proof}
In terms of counting processes, with the notation in Theorem \ref{thm:bdcox}, the assertion of the corollary is that
\begin{equation*}
(N_{\tiny \mbox{count}}(v), \, 0 \le v \le M_{\nu}) \stackrel{d}{=} (N_{\tiny \mbox{count}}(M_{\nu}) -  N_{\tiny \mbox{count}}(M_{\nu} - v), \, 0 \le v \le M_{\nu}).
\end{equation*}
By Theorem \ref{thm:bdcox}, the process $(N_{\tiny \mbox{count}}(v), \, 0 \le v \le M_{\nu})$ is the sum of three counting processes, the first two counting births and deaths in a birth and death process, and the third a Cox process driven by that birth and death process.
Note that the birth and death process starts at $1$, and its penultimate state is $1$.
It is well known that any birth and death process starting and ending in the same state is reversible.
It is also straightforward that after adding a Cox process driven by such a reversible process, the resulting process is still reversible.
Hence the conclusion.
\end{proof}

\quad One consequence of Corollary \ref{coro:spacingrev} is that the last spacing of the order statistics of $\{S_0, \ldots, S_{\nu}\}$
conditioned on $\nu \ge 1$ has the same exponential distribution as the first:
\begin{equation*}
(\Delta_{\nu} \,|\, \nu \ge 1) \stackrel{d}{=} (\Delta_1 \,|\, \nu \ge 1) \stackrel{d}{=} \frac{1}{2} \varepsilon_1.
\end{equation*}
This identity in law also follows from the memoryless property of the exponential distribution. 
However, it is not easy to see the next level without calculation:
\begin{equation*}
(\Delta_{\nu}, \Delta_{\nu-1} \,|\, \nu \ge 2) \stackrel{d}{=} (\Delta_1, \Delta_2 \,|\, \nu \ge 2) \stackrel{d}{=} (\frac{1}{2} \varepsilon_1, \frac{1}{2 Y'} \varepsilon'_1),
\end{equation*}
where $\varepsilon'_1$ is standard exponential independent of $\varepsilon_1$, and $Y' \in \{1, 2\}$ with $\mathbb{P}(Y' = 1) = 1- \mathbb{P}(Y' = 2) = \frac{2}{3}$.
So Theorem \ref{thm:bdcox} exposes the hidden symmetry presented in Corollary \ref{coro:spacingrev}.

\quad To connect with Theorem \ref{thm:main}, we will show in Section \ref{sc6} that the limiting distribution as $n \to \infty$ of $M_{k,n} - M_{k-1,n}$, 
the $k^{th}$ spacing from the bottom between the order statistics of $(S_i, \, 0 \le i \le n)$ 
coincides with the limiting distribution of $\Delta_k$ given $\nu \ge m$ as $m \to \infty$.
By Theorem \ref{thm:bdcox}, the spacings $\Delta_k$ are constructed as an explicit function of a birth and death process which generates a total number of $\nu$ children.
It is well known that conditioning a critical branching process to create a large number of offsprings induces a limit process in the early
generations which may be described as a branching process with immigration, see e.g. \cite{Aldous91, AP98, Evans92}.
The simplest limit of this kind is the limit distribution of the bottom spacing $\Delta_1$ given that $\nu$ is large.
According to Lemma \ref{lem:branching}, 
the conditional distribution of $\Delta_1$ given $\nu \ge m$ is identically equal to that of $\frac{1}{2} \varepsilon_1$ for all $m \ge 1$, 
so the evaluation of this first limit is easy.
The following lemma identifies further limit laws of $\Delta_k$ given $\nu \ge m$ for large $m$ in terms of a Markov chain embedded in the birth and death process.

\begin{lemma}
\label{lem:idDelta}
Let $(N_{\pm}(v), \, v \ge 0)$ be the birth and death process defined in Theorem \ref{thm:bdcox},
and for $k \ge 1$ let $Y_k:=N_{\pm}(M_{k-1})$, i.e. $Y_k$ is the number of excursions of the walk $(S_i, \, 0 \le i \le \nu)$ above level $M_{k-1}$.
Conditioned on the event $\{\nu \ge 1\}$,
\begin{enumerate}[itemsep = 3 pt]
\item
$(Y_k, \, k \ge 1)$ is a homogeneous Markov chain with initial state $Y_1 = 1$ and transition matrix $\frac{1}{2}(I + P_0)$ on the nonnegative integers, where $I$ is the identity matrix and $P_0$ is defined by \eqref{eq:P0}.
\item
Given $(Y_1, \ldots, Y_k)$ with $Y_k >0$, the lowest $k$ spacings $(\Delta_1, \ldots, \Delta_k)$ between values of $(S_i, \, 0 \le i \le \nu)$ are independent exponential variables with rate $2Y_k$. That is,
\begin{equation}
\label{eq:Deltafinite}
(\Delta_j, \, 1 \le j \le k \,|\, \nu \ge k) \stackrel{d}{=} \left(\frac{\varepsilon_j}{2Y_j}, \, 1 \le j \le k \,\bigg| \, Y_k > 0 \right),
\end{equation}
where $(\varepsilon_j, \, j \ge 1)$ is a sequence of i.i.d. standard exponential variables, independent of the Markov chain $(Y_j, \, j \ge 1)$.
\end{enumerate}
Moreover, for each fixed $k$ the limiting joint distribution of the first $k$ spacings given $\nu \ge m$ for large $m$ is given by
\begin{equation}
\label{eq:Deltainfinite}
(\Delta_j, \, 1 \le j \le k \,|\, \nu \ge m) \stackrel{d}{\longrightarrow} \left(\frac{\varepsilon_j}{2Y^{\uparrow}_j}, \, 1 \le j \le k  \right) \quad \mbox{as } m \to \infty,
\end{equation}
where $(Y^{\uparrow}_j, \, 1 \le j \le k)$ with $Y^{\uparrow}_1 = 1$ is the homogeneous Markov chain obtained as the Doob-h transform of $(Y_j, \, 1 \le j \le k)$ with respect to the harmonic function $h(j) = j$, whose $m$-step transition probabilities are 
\begin{equation}
\label{eq:39}
P^{\uparrow}(i,j) = 2^{-m} (I + P_0)^m(i,j) \frac{j}{i}, \quad \mbox{for } i,j \ge 1.
\end{equation}
\end{lemma}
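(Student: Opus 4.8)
The plan is to extract parts (1) and (2) directly from the Markov jump structure of the process $N_{\tiny\mbox{count}}$ constructed in Theorem \ref{thm:bdcox}, and then to obtain the limit law \eqref{eq:Deltainfinite} by the classical device of conditioning a transient chain to avoid its absorbing state. Throughout we work on $\{\nu\ge1\}$, where, by Theorem \ref{thm:bdcox}, the points $0=M_0<M_1<\cdots<M_\nu$ may be taken to be the jump times of $N_{\tiny\mbox{count}}$; since the walk $(S_i,\,0\le i\le\nu)$ forms a single excursion above $0$, we have $Y_1=N_\pm(M_0)=N_\pm(0)=1$. The birth-and-death process $N_\pm$ is constant on each interval $[M_{k-1},M_k)$ with value $Y_k=N_\pm(M_{k-1})$, and it is absorbed at $0$ exactly after the last level $M_\nu$, so that $\{Y_k>0\}=\{\nu\ge k\}$; in particular the conditionings on the two sides of \eqref{eq:Deltafinite} describe the same event, and $\{\nu\ge m\}=\{Y_m>0\}$.

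View $N_{\tiny\mbox{count}}$ as a Markov jump process carrying $N_\pm$ as side information. When $N_\pm=j>0$ the three competing mechanisms --- split, death, and Cox mark --- run at rates $j/2$, $j/2$, and $j$, so the next level appears after an exponential waiting time of rate $2j$, and, independently of that waiting time, it is a split (then $N_\pm\colon j\mapsto j+1$), a death ($j\mapsto j-1$), or a mark ($j\mapsto j$) with probabilities $\frac14,\frac14,\frac12$. This is precisely the assertion that $(Y_k)_{k\ge1}$ is a Markov chain started at $1$ with one-step matrix $\frac12(I+P_0)$ (rows off $0$ send $j$ to $j\pm1$ with probability $\frac14$ and to $j$ with probability $\frac12$), and that, conditionally on $(Y_1,\dots,Y_k)$, the spacings $\Delta_1,\dots,\Delta_k$ are independent with $\Delta_j$ exponential of rate $2Y_j$; writing $\Delta_j=\varepsilon_j/(2Y_j)$ with standard exponentials $\varepsilon_j$ independent of $(Y_j)$ yields \eqref{eq:Deltafinite}. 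This proves (1) and (2) simultaneously.

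For the limit, fix $k$ and take $m\ge k$. Since $(\varepsilon_j)$ is independent of $(Y_j)$ and the $Y$'s take values in a countable set, it suffices to show that the law of $(Y_1,\dots,Y_k)$ given $\{Y_m>0\}$ converges, as $m\to\infty$, to the law of the first $k$ steps of the Doob $h$-transform $Y^{\uparrow}$ with $h(j)=j$. A one-line check gives that $h(j)=j$ is harmonic for $\frac12(I+P_0)$ off $0$, namely $\frac12 j+\frac14(j-1)+\frac14(j+1)=j$; consequently the $m$-step kernel of the $h$-transform is $(h(j)/h(i))\big(\frac12(I+P_0)\big)^m(i,j)=2^{-m}(I+P_0)^m(i,j)\,j/i$, which is \eqref{eq:39}, and the $h$-transform started at $1$ assigns to a path $(y_1,\dots,y_k)$ of positive states the mass $y_k\,\mathbb{P}_1(Y_1=y_1,\dots,Y_k=y_k)$. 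On the other hand, the Markov property at time $k$ together with $Y_1=1$ gives
\[
\mathbb{P}(Y_1=y_1,\dots,Y_k=y_k\mid Y_m>0)
=\mathbb{P}(Y_1=y_1,\dots,Y_k=y_k)\,\frac{\mathbb{P}_{y_k}(Y_{m-k}>0)}{\mathbb{P}_1(Y_m>0)},
\]
so matching the two expressions reduces the whole statement to the ratio limit $\mathbb{P}_j(Y_n>0)/\mathbb{P}_1(Y_n>0)\to j$ as $n\to\infty$ (and $\mathbb{P}_1(Y_{m-k}>0)/\mathbb{P}_1(Y_m>0)\to1$, which is immediate once the asymptotics below are in hand).

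The ratio limit for the lazy, $0$-absorbed walk $Y$ is the one genuinely analytic ingredient, and I expect it to be the main, though routine, obstacle. I would prove it directly: conditioning on the number of non-lazy steps, which is $\mathrm{Binomial}(n,\frac12)$ and independent of the embedded $\pm1$ moves, reduces $\mathbb{P}_j(Y_n>0)$ to an average of the survival probabilities $\mathbb{P}_j(\sigma_0>L)$ of a simple symmetric random walk absorbed at $0$; the reflection principle and the local central limit theorem give $\mathbb{P}_j(\sigma_0>L)\sim 2j/\sqrt{2\pi L}$, and averaging against the concentrated binomial weights yields $\mathbb{P}_j(Y_n>0)\sim 2j/\sqrt{\pi n}$, with the leading constant linear in $j$, whence the ratio is $j$. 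Alternatively one may invoke the standard theory of $h$-processes for transient chains with a single absorbing state admitting a minimal positive harmonic function vanishing there; the resulting conditioned chain is the lazy version of a simple random walk conditioned to stay positive, i.e.\ the $h$-transform by $h(j)=j$ of \cite{Pitman75}. Combining the two convergences with the independence of $(\varepsilon_j)$ and the continuity of the coordinatewise map $(y_j)\mapsto(\varepsilon_j/(2y_j))$ on the countable state space gives \eqref{eq:Deltainfinite}.
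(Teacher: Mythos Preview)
Your proof is correct and follows essentially the same route as the paper's: parts (1) and (2) are read off from the jump-hold structure of $N_{\tiny\mbox{count}}$ in Theorem~\ref{thm:bdcox} (rates $j/2,\,j/2,\,j$ giving total rate $2j$ and transition probabilities $\tfrac14,\tfrac12,\tfrac14$), and the limit \eqref{eq:Deltainfinite} is obtained by conditioning the chain $(Y_j)$ on $\{Y_m>0\}$ and passing to the Doob $h$-transform with $h(j)=j$. The paper's proof simply asserts the last step as a known fact, whereas you go further and sketch the ratio limit $\mathbb{P}_j(Y_n>0)/\mathbb{P}_1(Y_n>0)\to j$ via the binomial thinning and the reflection principle; this extra detail is sound and not required by the paper, which leaves that convergence unproved.
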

\begin{proof}
For part (1), the branching probabilities are $1/4$, $1/2$, $1/4$ according to whether a point $M_{k}$ splits into 0, 1 or $2$ non-trivial fragments.
Thus,
\begin{equation*}
Y_{k+1} = Y_k \mbox{ with probability } \frac{1}{2} \quad \mbox{and} \quad Y_{k+1} = Y_k \pm 1 \mbox{ with probability } \frac{1}{4},
\end{equation*}
provided that $Y_k > 0$,
which yields the transition matrix $\frac{1}{2}(I + P_0)$.
Part (2) is a consequence of the jump-hold descriptions of the birth and death process and the Cox process of marks.
The second half of this lemma follows from the fact that $(Y^{\uparrow}_j, \, 1 \le i \le k)$ is the limit in distribution of $(Y_j, \, 1 \le j \le k)$ given $Y_m > 0$ as $m \to \infty$.
\end{proof}

\section{Branching descriptions of the Laplace walk stopped at the first descending ladder time}
\label{sc4}

\quad This section provides further discussions on the branching description of the order statistics $\{M_0, \ldots, M_{\nu}\}$ of the Laplace walk before the first descending ladder time $\tau^{-}:= \nu + 1$.
Throughout this section, let $(Z(t), \, t \ge 0)$ be a birth and death process which evolves according to the total number of individuals alive at time $t$ in a critical binary branching process (CBBP) such that
\begin{itemize}[itemsep = 3 pt]
\item
it starts with $Z(0) \ge 0$ individuals at time $t = 0$;
\item
each individual lives an exponential lifetime with mean $1/2$, and according to a fair coin toss independent of the lifetime, the individual dies and leaves either $0$ or $2$ children;
\item
each individual present at any given time continues according to the same branching mechanism, independent of all other individuals. 
\end{itemize}
The counting process $(Z(t), \, t \ge 0)$ is then a birth and death process with transitions from $j$ to $j-1$ at rate $j$, from $j$ to $j+1$ at rate $j$, and all other transitions at rate $0$.

\quad The following corollary of Theorem \ref{thm:bdcox} gives a representation of $\{M_0, \ldots, M_{\nu}\}$ 
without conditioning on $\{\nu \ge 1\}$.

\begin{corollary}
\label{coro:bdcox}
The order statistics $\{M_0, \ldots, M_{\nu}\}$ of $(S_i, \, 0 \le i \le \nu)$ have the same joint distribution as if they were constructed as 
$M_0 = 0$ and $M_k$ for $k = 1,2,\ldots$ the time of $k^{th}$ birth or death or mark generated by the critical binary birth and death process $(Z(t/2), \, t \ge 0)$ according to the scheme:
\begin{equation}
\label{eq:LeGallscheme}
M_k:= \mbox{the } k^{th}\,\, t > 0: Z\left(\frac{t}{2}\right) - Z\left(\frac{t-}{2}\right) = \pm 1 \mbox{ or } N_{\tiny \mbox{marks}}(t) - N_{\tiny \mbox{marks}}(t-) = 1,
\end{equation}
with $Z(0)$ assigned Bernoulli$(1/2)$ distribution on $\{0,1\}$, and 
\begin{equation}
N_{\tiny \mbox{marks}}(t):= N\left( \int_0^t Z(v/2) dv\right), \quad t \ge 0,
\end{equation}
the Cox process with intensity $Z(t/2)$ at time $t$ derived from $Z$ and an independent standard Poisson process $N$.
Consequently, the probability generating function of $\nu$ is
\begin{equation}
\label{eq:nuPGF}
\mathbb{E}z^{\nu} = z^{-1}(1 - \sqrt{1-z}).
\end{equation}
\end{corollary}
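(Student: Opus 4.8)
The plan is to deduce Corollary~\ref{coro:bdcox} from Theorem~\ref{thm:bdcox} by removing the conditioning on $\{\nu \ge 1\}$, and then to read off \eqref{eq:nuPGF} from the branching recursion already recorded in Lemma~\ref{lem:branching}. First I would split on the dichotomy noted just before Lemma~\ref{lem:branching}: $\mathbb{P}(\nu = 0) = \mathbb{P}(\tau^- = 1) = \frac12$ while $\mathbb{P}(\nu \ge 1) = \frac12$. On the event $\{\nu = 0\}$ the range $\{M_0,\dots,M_\nu\}$ is the single point $\{0\}$; on the construction side, a critical binary birth-and-death process $(Z(t/2),\,t\ge 0)$ started from $Z(0)=0$ is identically zero, has no births, deaths or marks, so the scheme \eqref{eq:LeGallscheme} produces only $M_0 = 0$. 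Since $Z(0)$ is assigned the Bernoulli$(1/2)$ law on $\{0,1\}$, the matter reduces to showing that, conditionally on $Z(0)=1$, the construction \eqref{eq:LeGallscheme} has the law of $(M_0,\dots,M_\nu)$ given $\{\nu \ge 1\}$.

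For that I would identify the conditioned construction with the process $N_{\tiny \mbox{count}}$ of Theorem~\ref{thm:bdcox}. Started from $Z(0)=1$, the time-changed process $(Z(t/2),\,t\ge 0)$ makes transitions $j \to j\pm 1$ at rate $j/2$ each, hence has exactly the law of the birth-and-death process $(N_\pm(v),\,v\ge 0)$ of Theorem~\ref{thm:bdcox}; under this identification $N_{\tiny \mbox{births}}$ and $N_{\tiny \mbox{deaths}}$ count the up- and down-jumps of $Z(\cdot/2)$, whereas $N_{\tiny \mbox{marks}}(t) = N\!\bigl(\int_0^t Z(v/2)\,dv\bigr)$ is, conditionally on $Z$, a time-changed standard Poisson process with time-$t$ intensity $Z(t/2) = N_\pm(t)$, i.e.\ a Cox process driven by $(N_\pm(v),\,v\ge 0)$, again exactly as in Theorem~\ref{thm:bdcox}. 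These three counting processes have a.s.\ no common jump and are a.s.\ simple, so their superposition is $N_{\tiny \mbox{count}}$, its jump times listed in increasing order are $M_1 < M_2 < \cdots$ as in \eqref{eq:LeGallscheme}, and $\nu = N_{\tiny \mbox{count}}(\infty)$ is a.s.\ finite because the critical branching process goes extinct a.s. By Theorem~\ref{thm:bdcox} this is precisely the conditional law of $(M_0,\dots,M_\nu)$ given $\{\nu \ge 1\}$, which finishes the distributional identity.

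For the generating function \eqref{eq:nuPGF} I would argue from the recursive structure of Lemma~\ref{lem:branching}(2). Writing $h(z) := \mathbb{E}(z^\nu \mid \nu \ge 1)$, the description there --- conditioned on $\{\nu\ge1\}$, the range is its least point together with the superposition of a Binomial$(2,1/2)$ number of independent copies of the range --- gives, after conditioning on that binomial, the quadratic $h(z) = \frac{z}{4}\bigl(1 + h(z)\bigr)^2$; the branch with $h(0)=0$ is $h(z) = \frac{2}{z}\bigl(1 - \sqrt{1-z} - \frac{z}{2}\bigr)$, which is \eqref{eq:PGFnu}, whence $\mathbb{E}z^\nu = \frac12 + \frac12 h(z) = z^{-1}\bigl(1-\sqrt{1-z}\bigr)$; equivalently one can just use $\mathbb{E}z^\nu = z^{-1}\mathbb{E}z^{\tau^-}$ together with \eqref{eq:PGFtau}. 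I do not expect a genuine obstacle here, the proof being a routine unpacking of Theorem~\ref{thm:bdcox}; the only point that needs care is the factor-of-two time change, since the critical binary branching process $Z$ has per-capita birth and death rates $1$ whereas the excursion-count of Theorem~\ref{thm:bdcox} has rates $1/2$, which is why $Z$ must be run at half speed and why the cumulative mark intensity is $\int_0^t Z(v/2)\,dv$ rather than $\int_0^t Z(v)\,dv$.
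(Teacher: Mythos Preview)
Your proof is correct and follows exactly the approach the paper intends: the paper does not give an explicit proof of this corollary, merely remarking before its statement that it ``can be easily derived from Theorem~\ref{thm:bdcox}'' by removing the conditioning on $\{\nu\ge 1\}$. Your argument fills in precisely those details --- the Bernoulli$(1/2)$ split for $Z(0)$, the identification of $Z(\cdot/2)$ with $N_\pm$ via the rate-halving time change, and the derivation of \eqref{eq:nuPGF} from \eqref{eq:PGFtau} via $\nu=\tau^--1$ --- all of which are routine and match the paper's intent.
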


\quad Next we will give an alternative branching description of the point process $\{M_0, \ldots, M_{\nu}\}$. 
We recall a lemma from Feller \cite[XVII.10, 11]{Feller1} which specifies the distribution of $Z(t)$ for any fixed $Z(0)$. 
See also \cite{Tavare18} for related discussions on the linear birth and death process. 
\begin{lemma}
For $k \ge 0$, let $\mathbb{P}_k$ govern the critical binary branching process $(Z(t), \, t \ge 0)$ with $Z(0) = k$.
Then
\begin{equation}
\label{eq:43}
\mathbb{P}_1(Z(t) = 0) = \frac{t}{t+1}\quad \mbox{and} \quad \mathbb{P}_1(Z(t) = n) = \frac{t^{n-1}}{(1+t)^{n+1}} \,\,\, \mbox{for } n \ge 1,
\end{equation}
and for each $k \ge 0$,
\begin{equation}
\label{eq:44}
\mathbb{P}_k(Z(t) = 0) = \left(\frac{t}{1+t}\right)^k.
\end{equation}
\end{lemma}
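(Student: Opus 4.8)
The plan is to reduce to the case $k = 1$ using the branching property, solve an ordinary differential equation for the probability generating function $F(s,t) := \mathbb{E}_1 s^{Z(t)}$, and then read off its Taylor coefficients in $s$. Since the individuals alive at time $0$ evolve as independent copies of the process started from one individual, $\mathbb{E}_k s^{Z(t)} = F(s,t)^k$ for every $k \ge 0$; in particular $\mathbb{P}_k(Z(t) = 0) = F(0,t)^k$, so \eqref{eq:44} follows at once from the $n = 0$ case of \eqref{eq:43}, and it remains only to compute $F$. Conditioning a single individual on its first event --- an $\mathrm{Exp}(2)$ holding time after which it either dies (probability $1/2$) or is replaced by two fresh individuals (probability $1/2$) --- and using the branching property gives the renewal equation
\begin{equation*}
F(s,t) = e^{-2t}s + \int_0^t 2 e^{-2u}\Bigl(\tfrac12 + \tfrac12 F(s,t-u)^2\Bigr)\,du .
\end{equation*}

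Next I would multiply through by $e^{2t}$ and differentiate in $t$, which turns the renewal equation into the Riccati equation $\partial_t F = (1-F)^2$ with $F(s,0) = s$ (equivalently, the branching-mechanism equation $\partial_t F = \phi(F)$ with $\phi(x) = 2(\tfrac12 + \tfrac12 x^2 - x) = (x-1)^2$). Substituting $g = 1 - F$ gives $\partial_t g = -g^2$, which integrates to $1/g = t + 1/(1-s)$, so that
\begin{equation*}
F(s,t) = 1 - \frac{1-s}{1 + t(1-s)} = \frac{t + (1-t)s}{(1+t) - ts} .
\end{equation*}
Expanding $\bigl((1+t) - ts\bigr)^{-1}$ as a geometric series gives $F(s,t) = (1+t)^{-1}\bigl(t + (1-t)s\bigr)\sum_{m \ge 0}\bigl(\tfrac{t}{1+t}\bigr)^m s^m$; its constant term is $t/(1+t)$, and for $n \ge 1$ the coefficient of $s^n$ collapses, using the identity $t^2 + (1-t)(1+t) = 1$, to $t^{n-1}/(1+t)^{n+1}$. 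This is exactly \eqref{eq:43}, and setting $s = 0$ and taking $k$-th powers gives \eqref{eq:44}.

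This is the classical generating function of the linear birth--death process, so there is no genuine difficulty; the only step meriting a word of justification is the passage from the renewal equation to the ODE --- continuity of $t \mapsto F(s,t)$ so that one may differentiate under the integral sign, together with uniqueness of the solution of the ODE with the stated initial condition --- all of which is standard for Markov branching processes. Everything else is routine algebra with geometric series.
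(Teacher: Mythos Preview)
Your proof is correct. The paper does not actually prove this lemma; it is stated with the preamble ``We recall a lemma from Feller \cite[XVII.10, 11]{Feller1}'' and simply cited without proof, so there is nothing to compare against beyond noting that your generating-function argument is exactly the classical one found in Feller.
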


\quad The following theorem is a consequence of Le Gall \cite{LeGall89}, 
which is also known as Geiger's lifeline representation of a CBBP \cite{Geiger95}, or the binary-$(0,1)$ tree \cite{PW05}.
\begin{theorem}
\label{thm:GLdescription}
The order statistics $\{M_0, \ldots, M_{\nu}\}$ of $(S_i, \, 0 \le i \le \nu)$ have the same joint distribution as if they were constructed as 
$M_0 = 0$ and $M_k$ for $k = 1,2,\ldots$ the time of $k^{th}$ death in the critical binary birth and death process $(Z(t), \, t \ge 0)$:
\begin{equation}
\label{eq:LeGallscheme2}
M_k:= \mbox{the } k^{th}\,\, t > 0: Z(t) - Z(t-) = -1,
\end{equation}
with $Z(0)$ assigned the geometric$(1/2)$ distribution on $\{0,1, \ldots\}$, i.e.
$\mathbb{P}(Z(0) = n) = 2^{-n-1}$ for $n \ge 0$.
Consequently, 
\begin{equation}
\label{eq:maxMnu}
\mathbb{P}(M_{\nu} > t) = \frac{1}{2+t} \quad \mbox{for } t > 0.
\end{equation}
\end{theorem}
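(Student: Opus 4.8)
\textbf{Proof proposal for Theorem \ref{thm:GLdescription}.}
The plan is to derive the ``binary-$(0,1)$ tree'' representation from the recursive description of $\{M_0,\ldots,M_\nu\}$ in Lemma \ref{lem:branching} together with Corollary \ref{coro:bdcox}, following Le Gall's construction \cite{LeGall89} and Geiger's lifeline decomposition \cite{Geiger95}. The key observation is that Corollary \ref{coro:bdcox} already realizes $\{M_0,\ldots,M_\nu\}$ in terms of the critical binary branching process $(Z(t/2),\,t\ge 0)$ started from a Bernoulli$(1/2)$ initial state, where $M_k$ counts births, deaths \emph{and} Poisson marks (individuals with exactly one child). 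The aim is to show that one can equivalently \emph{eliminate the marks} by enriching the initial population: an individual carrying a Poisson rate-$1$ clock of marks along its lifeline and then branching into two at rate $1/2$ and dying at rate $1/2$ is, by the thinning/superposition property of independent exponential clocks, distributionally the same as a lineage that, at each mark, spawns one extra independent CBBP and continues; unrolling this substitution converts the Cox marks into ordinary deaths of freshly created individuals.

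The steps I would carry out are as follows. First, recall from Lemma \ref{lem:branching}(2) that, conditioned on $\{\nu\ge 1\}$, $\{M_0,\ldots,M_\nu\}$ is built from $M_1\stackrel{d}{=}\tfrac12\varepsilon_1$ together with a binomial$(2,1/2)$ number of i.i.d. copies of the same configuration, grafted above level $M_1$. Second, interpret the exponential$(2)$ spacing $M_1$ as the first death time in a unit-rate ($\text{rate}=1$) exponential lifetime of a single founding individual in $(Z(t),\,t\ge 0)$ (note $Z$ has death rate $j$, matching the $\tfrac12$-mean lifetime in the rescaled process $Z(t/2)$, hence rate $1$ after the $t\mapsto t/2$ change used in Corollary \ref{coro:bdcox} — I would keep careful track of this factor of $2$), and observe that the binomial$(2,1/2)$ offspring count is exactly the ``$0$ or $2$ children by a fair coin'' rule of the CBBP. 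Third, argue by induction on the (a.s. finite) number of individuals that iterating this recursive graft is precisely the genealogical unrolling of $(Z(t),\,t\ge 0)$ read off at its death epochs, once the initial population is taken to be geometric$(1/2)$: the extra geometric mass at $\{1,2,\ldots\}$ over the Bernoulli case accounts exactly for the conditioning event $\{\nu\ge 1\}$ being removed and for the mark-elimination bookkeeping, via $\sum_{n\ge 0}2^{-n-1}\mathbb{P}_n(\cdot)$ reproducing the unconditional law of $\{M_0,\ldots,M_\nu\}$. Finally, to get \eqref{eq:maxMnu}, note that $M_\nu$ is the time at which $(Z(t),\,t\ge 0)$ first hits $0$, so by \eqref{eq:44} and the geometric$(1/2)$ initial law,
\begin{equation*}
\mathbb{P}(M_\nu\le t)=\sum_{n\ge 0}2^{-n-1}\left(\frac{t}{1+t}\right)^n=\frac{1}{2}\cdot\frac{1}{1-\frac{t}{2(1+t)}}=\frac{1+t}{2+t},
\end{equation*}
which gives $\mathbb{P}(M_\nu>t)=1/(2+t)$.

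The main obstacle I anticipate is the mark-elimination step: making rigorous that a Cox process of rate-$1$ marks along the lifelines of the Bernoulli-initialized CBBP, when ``resolved'' into genuine branching events, produces exactly the geometric-initialized CBBP read at death times, rather than some other offspring or initial distribution. This requires matching three things simultaneously — the number of points $\nu$ (whose generating function $z^{-1}(1-\sqrt{1-z})$ from \eqref{eq:nuPGF} must be reproduced as the law of the total number of deaths of a geometric$(1/2)$-founded CBBP, a check one can do via the known extinction-time law \eqref{eq:43}--\eqref{eq:44}), the spacings, and the nesting structure. I would handle this by comparing the two constructions through their common recursive decomposition at the first branch point (Lemma \ref{lem:branching}(2) versus the branching property of $Z$ at its first jump), reducing the claim to the algebraic identity that a geometric$(1/2)$ number of i.i.d. clusters, each a CBBP run to extinction, has the same ``death-time point process'' law as the explicit mark-and-branch scheme of Corollary \ref{coro:bdcox}; uniqueness of this law given its recursive self-similarity then closes the argument.
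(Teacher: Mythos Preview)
Your computation of \eqref{eq:maxMnu} is correct and is exactly what the paper does: identify $\{M_\nu>t\}=\{Z(t)>0\}$, condition on $Z(0)$, and sum the geometric series using \eqref{eq:44}.

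For the main representation \eqref{eq:LeGallscheme2}, however, your route differs from the paper's and has a real gap. The paper does not attempt to transform the scheme of Corollary~\ref{coro:bdcox} into that of Theorem~\ref{thm:GLdescription}; it simply invokes \cite[Theorem~3]{LeGall89}, of which \eqref{eq:LeGallscheme2} is a direct restatement. Your proposed ``mark-elimination'' passage between the two schemes is where the difficulty lies, and your inductive Step~2 already goes wrong in two places. First, the binomial$(2,\tfrac12)$ offspring law of Lemma~\ref{lem:branching}(2) takes values $\{0,1,2\}$ with probabilities $(\tfrac14,\tfrac12,\tfrac14)$; this is \emph{not} the ``$0$ or $2$ by a fair coin'' rule of the CBBP, so you cannot read the recursion of Lemma~\ref{lem:branching} directly as one generation of $Z$. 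Second, you try to identify $M_1\stackrel{d}{=}\tfrac12\varepsilon_1$ with the first death time of a \emph{single} founder in $Z$; but in Theorem~\ref{thm:GLdescription} the process starts with a geometric$(1/2)$ number of individuals, and births may occur before the first death. The paper itself points out, just after the proof, that it is \emph{not} obvious from the death-time scheme that $M_1$ is exponential with mean $1/2$, and in fact computes the nontrivial first-death-time law \eqref{eq:firstdeath} as a consequence of comparing the two representations --- the opposite direction to what you attempt.

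In short: keep your derivation of \eqref{eq:maxMnu}, but for \eqref{eq:LeGallscheme2} either cite Le~Gall directly as the paper does, or, if you want a self-contained argument, work from Le~Gall's original tree encoding of the walk excursion (rather than from Corollary~\ref{coro:bdcox}) and read off the death times; the detour through mark-elimination is harder than the result itself.
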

\begin{proof}
The construction of $M_k$'s is just a reformulation of \cite[Theorem 3]{LeGall89}.
Moreover, for each $t > 0$,
\begin{align*}
\mathbb{P}(M_{\nu} > t) & = \mathbb{P}(Z(t) > 0) \\
& = 1 - \sum_{n \ge 0} \mathbb{P}(Z(0) = n) \, \mathbb{P}_n(Z(t) = 0) \\
& =1 - \sum_{n \ge 0} 2^{-n-1} \left(\frac{t}{1+t}\right)^n,
\end{align*}
where the last equality follows from \eqref{eq:44}. 
This leads to \eqref{eq:maxMnu}.
\end{proof}

\quad The construction of Theorem \ref{thm:GLdescription} differs from that of Corollary \ref{coro:bdcox} in two ways which compensate each other. 
In Theorem \ref{thm:GLdescription}, the CBBP is started with a geometric$(1/2)$ number of initial individuals with $\mathbb{E}Z(0) = 1$, but only deaths of individuals count towards the counting process which generates a copy of $N_{\tiny \mbox{des}}$.
As deaths occur at rate $j$ when $Z(v) = j$, and $\mathbb{E}Z(v) \equiv 1$ for all $v$, the mean rate of points of $N_{\tiny \mbox{des}}$ per unit level at level $v$ is therefore always $1$.
In Corollary \ref{coro:bdcox}, the CBBP is started with a Bernoulli$(1/2)$ number of initial individuals with $\mathbb{E}Z(0) = 0$.
Moreover, when the driving process $Z(v/2) =j$, deaths of individual only occur at rate $j/2$.
However, this is compensated by the fact that birth times are counted at rate $j/2$, as do additional marks at rate $j$.
So at any particular level $v$, the mean rate of points of $N_{\tiny \mbox{des}}$ per unit level is $\mathbb{E}Z(v)(\frac{1}{2} + \frac{1}{2} + 1) = 1$ as expected.
Furthermore, for the CBBP started with $0$ or $1$ individual, 
\begin{align*}
\mathbb{P}(M_{\nu} > t) = \mathbb{P}(Z(t/2) > 0)  = \frac{1}{2} 0 + \frac{1}{2} \mathbb{P}_1(Z(t/2) > 0) = \frac{1}{2+t},
\end{align*}
where the last equality follows from \eqref{eq:43}. 
This is in agreement with the formula \eqref{eq:maxMnu}.

\quad In contrast with Corollary \ref{coro:bdcox}, it is much less apparent from the representation of $M_k$'s in Theorem \ref{thm:GLdescription} that the spacings $(\Delta_1,\ldots, \Delta_{\nu})$ between these points  are reversible.
It is also less obvious that $M_1:=\min_{1 \le i \le \nu} S_i$ represented as the first death time in the CBBP started with a geometric$(1/2)$ number of individuals $Z(0)$ given $Z(0) \ge 1$ has the exponential distribution with mean $1/2$.
Recall that $\mathbb{P}_1$ governs the CBBP $(Z(t), \, t \ge 0)$ with $Z(0) = 1$.
By the agreement of the law of $M_1$ in both constructions, we get
\begin{align*}
\mathbb{P}(M_1 > t) & = e^{-2t} && (\mbox{by Corollary \ref{coro:bdcox}}) \\
& = \sum_{n \ge 0} 2^{-n-1} \mathbb{P}_1(\mbox{first death time} > t)^n&& (\mbox{by Theorem \ref{thm:GLdescription}})
\end{align*}
which yields the distribution of the first death time in the CBBP $(Z(t), \, t \ge 0)$ started with one individual:
\begin{equation}
\label{eq:firstdeath}
\mathbb{P}_1(\mbox{first death time} > t) = \frac{2 e^{-2t}}{1+e^{-2t}}.
\end{equation}
Clifford and Wei \cite{CW93}, extending earlier work \cite{Sr88}, showed that the death times in a branching process may be described as a Cox process driven by some squared radial Ornstein-Uhlenbeck process. 
But they only considered subcritical branching processes with immigration, 
so the formula \eqref{eq:firstdeath} for the death times in a CBBP does not seem to be easily read from these results.
On the other hand, it is obvious from the representation in Theorem \ref{thm:GLdescription} that given $\nu \ge 1$ the last spacing $\Delta_{\nu}:=M_{\nu} - M_{\nu-1}$ has the exponential distribution with mean $1/2$, because this random variable is the last holding time of $(Z(t), \, t \ge 0)$ in state $1$, which has an exit rate $2$. 

\quad For the CBBP model with $Z(0)$ distributed as geometric$(1/2)$, it is also interesting to consider the distribution of $Z(0)$ given the event
\begin{equation*}
\{Z(t) > 0\} = \{M_{\nu} > 0\},
\end{equation*}
which has probability $\frac{1}{2+t}$.
A simple Bayes calculation using \eqref{eq:44} and \eqref{eq:maxMnu} gives for $n \ge 1$,
\begin{align}
\mathbb{P}(Z(0) = n \,|\, Z(t) > 0) &= 2^{-n-1}(t+2)\left( 1 - \bigg(\frac{t}{1+t}\bigg)^n\right) \notag\\
& \longrightarrow n 2^{-n-1} \quad \mbox{as } t \to \infty.
\end{align}
This is the negative binomial distribution with parameters $2$ and $1/2$ shifted up by $1$.
As a result,
\begin{equation}
(Z(0) \,|\, Z(t) > 0) \stackrel{d}{\longrightarrow} 1 + Z(0) + Z'(0) \quad \mbox{as }t \to \infty, 
\end{equation}
where $Z'(0)$ is an independent copy of $Z(0)$, distributed as geometric$(1/2)$ on $\{0,1,\ldots\}$.
So with conditioning on $\{Z(t) > 0\}$ for large $t$, we get
\begin{equation}
\mathbb{E}(Z(0) \,|\, Z(t ) > 0) \longrightarrow 3 \quad \mbox{as } t \to \infty.
\end{equation}
As mentioned in Section \ref{sc3}, conditioning a critical branching process on nonextinction in the limit induces a single line of descent for an immortal particle which spins off particles continuing with the regular branching mechanism at some constant birth rate. 
The effect of this conditioning is to introduce a immigration term into the branching process.
In the present context with a geometric$(1/2)$ initial number of individuals, one of these individuals is destined to be the immortal particle, while the trees generated by the other two will terminate in finite time almost surely.

\section{Squared Bessel representation of the Laplace walk stopped at the first descending ladder time}
\label{sc5}

\quad Corollary \ref{coro:bdcox} and Theorem \ref{thm:GLdescription} provide descriptions of the point process $\{M_0, \ldots, M_{\nu}\}$ 
with $\nu:= \tau^{-} -1$ which are adequate in many aspects.  
For instance, for any fixed $v \ge 0$, they can be used to simulate all points $M_k$ in this process with $M_k \le v$ in finite expected time, 
which cannot be done by naive simulation of all $\nu$ points of the process as $\mathbb{E} \nu = \infty$.
Still, these descriptions do not easily yield a formula for the distribution of $M_k$, or of the count $N_{\tiny \mbox{des}}(v): = \sum_{k \ge 1} 1(M_k \le v)$ for any fixed $v$, let alone the multivariate distributions of these statistics. 
In this section we give some alternate description of $\{M_0, \ldots, M_{\nu}\}$, which is more convenient for such computations.

\quad The idea is to embed the symmetric Laplace walk $S$ into a standard Brownian motion $(B_t, \, t \ge 0)$:
\begin{equation}
\label{eq:embedding}
S_n = B_{2 \gamma_n} \quad \mbox{for } n \ge 1,
\end{equation}
where $\gamma_n: = \sum_{i = 1}^n \varepsilon_i$ for a sequence of i.i.d. standard exponential variables $\varepsilon_1, \varepsilon_2, \ldots$ independent of the Brownian motion $(B_t, \, t \ge 0)$.
Equivalently, $\gamma_n$ is the $n^{th}$ point in a Poisson process $N(t): = \sum_{n \ge 1} 1(\gamma_n \le t)$ with rate $1$, 
and $\gamma_n$ is gamma$(n,1)$ distributed with density $f_{\gamma_n}(x) = \frac{1}{(n-1)!} x^{n-1} e^{-x}$, $x > 0$.
We will use Brownian excursion theory to study the counting process $(N_{\tiny \mbox{des}}(v), \, v \ge 0)$.

\quad Recall from Section \ref{sc22} that $(Q_{\delta}(X, t), \, t \ge 0 )$ is a squared Bessel process of dimension $\delta$ with initial state $X$.
The following theorem identifies the counting process $(N_{\tiny \mbox{des}}(v), \, v \ge 0)$ as a Cox process driven by some functional of the $\mbox{BESQ}_0$ process. 
It is a variant of the Ray-Knight identity \eqref{eq:RK0} by embedding the symmetric Laplace walk into Brownian motion.
\begin{theorem}
\label{thm:SBdescription}
Let $(S_n, \, n \ge 0)$ be the symmetric Laplace walk represented as in \eqref{eq:embedding}, 
where $\gamma_n$ is the $n^{th}$ point in a Poisson process $N$ with rate $1$ independent of Brownian motion.
Let  $\tau^{-}:=\inf \{k \ge 1: S_k \le 0\}$.
Then:
\begin{enumerate}[itemsep = 3 pt]
\item
There is the identity in law of processes on $\mathcal{C}[0,\infty)$
\begin{equation}
\label{eq:54}
\left(L(v, 2 \gamma_{\tau^-}), \, v \ge 0\right) \stackrel{d}{=} (Q_0(2 \gamma_1, v), \, v \ge 0).
\end{equation}
\item
The counting process $(N_{\tiny \mbox{des}}(v), \, v \ge 0)$ defined by \eqref{eq:Ndes} is a Cox process driven by 
$(\frac{1}{2}L(v, 2 \gamma_{\tau^-}), \, v \ge 0)$ so that
\begin{equation}
\label{eq:55}
(N_{\tiny \mbox{des}}(v), \, v \ge 0) \stackrel{d}{=} \left(N\left(\frac{1}{2} \int_0^v Q_0(2\gamma_1, u)du \right), \, v \ge 0 \right),
\end{equation}
where on the right side the $\mbox{BESQ}_0$ process $Q_0$ with initial state $2 \gamma_1$ is independent of the Poisson process $N$ with rate $1$.
\item
For each fixed $v > 0$, the probability generating function of $N_{\tiny \mbox{des}}(v)$ is
\begin{equation}
\label{eq:Laplace56}
\mathbb{E}z^{N_{\tiny \mbox{des}}(v)} = (1 + \sqrt{1-z} \tanh v\sqrt{1-z})^{-1}.
\end{equation}
\end{enumerate}
\end{theorem}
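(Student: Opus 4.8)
\emph{Proof plan.} The plan is to carry out the whole analysis inside the Brownian embedding $S_n = B_{2\gamma_n}$ of \eqref{eq:embedding}, under which the sampling times $2\gamma_n$ form, on the Brownian clock, a rate-$\frac{1}{2}$ Poisson process independent of $B$. Set $\sigma := 2\gamma_{\tau^-}$, the first sampling time at which the walk has gone negative, and let $g := \sup\{t \le \sigma : B_t = 0\}$ be the last zero of $B$ before $\sigma$. Because $S_{\tau^-} < 0$ while $S_k > 0$ for $1 \le k < \tau^-$, the open interval $(g,\sigma)$ sits inside a single negative excursion of $B$ and contains no sampling point; hence $L(v,\sigma) = L(v,g)$ for every $v \ge 0$, and $N_{\tiny \mbox{des}}(v)$ is exactly the number of sampling points lying in $[0,g]$ at a Brownian value in $(0,v]$. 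I would then prove (1) by excursion theory together with the Ray--Knight theorem, deduce (2) from (1) by a Poisson thinning argument, and obtain (3) from (2) by the Pitman--Yor formula \eqref{eq:jointLaplace}.

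For part~(1) I would describe $g$ via It\^o's excursion theory of $B$ at level $0$. The excursions away from $0$ form a Poisson point process on (local time)$\,\times\,$(excursion space) with intensity $d\ell\, n(de)$; call an excursion \emph{marked} when it is negative and carries at least one sampling point, which, given the excursion duration $\zeta$, happens with probability $1-e^{-\zeta/2}$, independently of everything else. The marked excursions then form a Poisson process on the local time axis of rate $n^-(1-e^{-\zeta/2}) = \frac{1}{2}\, n(1-e^{-\zeta/2}) = \frac{1}{2}\sqrt{2\cdot \frac{1}{2}} = \frac{1}{2}$, where $n^-$ is the restriction of $n$ to negative excursions and I use the standard evaluation $n(1-e^{-\lambda\zeta}) = \sqrt{2\lambda}$ valid for the It\^o measure normalized against occupation-density local time. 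The first marked excursion is the negative excursion that straddles $\sigma$, its left endpoint is $g$, and so its local-time label $\ell^\ast := L(0,g)$ is exponential with mean $2$; thus $\ell^\ast \stackrel{d}{=} 2\gamma_1$. Now the positive excursions of $B$ carrying a local-time label below a given level are independent of the negative excursions and of the Poisson marks, so conditioning on $\ell^\ast$ does not alter their joint law; hence, given $\{\ell^\ast = \ell\}$, the field $(L(v,\sigma), v\ge 0) = (L(v,g), v\ge 0)$ is distributed as the local time of $B$ at positive levels up to the inverse local time $\tau_\ell := \inf\{t : L(0,t) > \ell\}$, which by the Ray--Knight description of $(L(v,\tau_\ell), v\ge 0)$ (the ingredient behind \eqref{eq:RK0}) is $(Q_0(\ell,v), v\ge 0)$. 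Averaging over $\ell^\ast \stackrel{d}{=} 2\gamma_1$ yields \eqref{eq:54}.

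For part~(2) I would condition on the path $B$ together with the sampling points falling in negative excursions: this fixes $g$, hence the field $L(\cdot,\sigma)$, while by the independence of the Poisson marks on positive and on negative excursions the sampling points lying in positive excursions remain, given this information, a Poisson process of intensity $\frac{1}{2}\,1(B_t > 0)\,dt$. Restricting to Brownian values in $(0,v]$ and to the window $[0,g]$, the conditional law of $N_{\tiny \mbox{des}}(v)$ is Poisson with mean $\frac{1}{2}\int_0^g 1(0 < B_s \le v)\,ds = \frac{1}{2}\int_0^v L(x,g)\,dx = \int_0^v \frac{1}{2}L(x,\sigma)\,dx$ by the occupation density formula, and jointly in $v$ this says that $(N_{\tiny \mbox{des}}(v), v \ge 0)$ is, given the conditioning $\sigma$-field, a Poisson process with intensity $\frac{1}{2}L(v,\sigma)\,dv$. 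A further conditional expectation given only $(\frac{1}{2}L(v,\sigma), v\ge 0)$ upgrades this to the Cox property, and \eqref{eq:55} then follows from part~(1) and the representation of a Cox process by a Poisson time change. Part~(3) is finally a short computation: by \eqref{eq:26}, $\mathbb{E}z^{N_{\tiny \mbox{des}}(v)} = \psi(v,1-z)$ with $\psi(v,\theta) = \mathbb{E}\exp\!\big(-\frac{\theta}{2}\int_0^v Q_0(2\gamma_1,u)\,du\big)$; taking $\delta=0$, $x=2\gamma_1$, $\alpha\downarrow 0$ and $\beta=\sqrt{\theta}$ in \eqref{eq:jointLaplace} gives $\mathbb{E}\exp\!\big(-\frac{\theta}{2}\int_0^v Q_0(x,u)\,du\big) = \exp\!\big(-\frac{x\sqrt{\theta}}{2}\tanh(v\sqrt{\theta})\big)$, and averaging over $x = 2\gamma_1$ (exponential with mean $2$) gives $\psi(v,\theta) = \big(1+\sqrt{\theta}\tanh(v\sqrt{\theta})\big)^{-1}$, which is \eqref{eq:Laplace56}. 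The step I expect to be the main obstacle is the bookkeeping in part~(1): getting the rate of marked excursions to be exactly $\frac{1}{2}$ hinges on the precise normalization of the It\^o measure relative to the occupation-density local time, and one must argue with care that conditioning on the position of the first marked (negative) excursion leaves untouched the law of the positive-excursion configuration, and hence of the positive-level local time field.
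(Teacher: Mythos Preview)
Your proposal is correct and follows essentially the same approach as the paper. Both arguments use It\^o excursion theory to identify the rate $\tfrac{1}{2}$ of negative marked excursions per unit local time (via the evaluation $n(1-e^{-\zeta/2})=\sqrt{2\cdot\tfrac12}=1$, halved for the negative side), deduce that $L(0,2\gamma_{\tau^-})\stackrel{d}{=}2\gamma_1$, and then invoke Ray--Knight to get the $\mbox{BESQ}_0$ law of the positive-level local time field; part~(2) is in both cases the Poisson mapping/thinning argument exploiting independence of marks on positive versus negative excursions, and part~(3) is the same Pitman--Yor computation. The only presentational difference is that you work explicitly with $g=\sup\{t\le\sigma:B_t=0\}$ and the conditional Ray--Knight description of $(L(v,\tau_\ell),v\ge0)$ given $\ell^\ast=\ell$, whereas the paper compares $T=2\gamma_{\tau^-}$ directly to $T=T_{-1}$ (both having $L(0,T)$ exponential of rate $\tfrac12$) and quotes \eqref{eq:RK0} for $x=1$; these are equivalent formulations of the same reasoning.
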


\begin{proof}
(1) Let $(\tau_{\ell}, \, \ell \ge 0)$ be the right-continuous inverse of $(L(0,t), \, t \ge 0)$.
According to It\^o's excursion theory (see e.g. \cite[Chapter XII]{RY99}), when the excursion of $|B|$ away from $0$ on the random interval 
$(\tau_{\ell-}, \tau_{\ell})$ is indexed by the constant value $\ell$ of $L(0,t)$ for all $t \in (\tau_{\ell-}, \tau_{\ell})$, 
the point processes of positive and negative excursions are independent and identically distributed copies of the Poisson point process of positive excursions. 
This assertion remains true if the excursion of $B$ on $(\tau_{\ell-}, \tau_{\ell})$ is embellished to include the increments of the independent Poisson process $(N(t/2), \, t \in (\tau_{\ell-}, \tau_{\ell}))$ as an auxilliary marking process.
For any random time $T$ such that $B_T < 0$ with probability one, there is the excursion decomposition of local time on the positive half line
\begin{equation}
\label{eq:localtdecomp}
L(v,T) = \sum_{\ell < L(0,T)} (L(v, \tau_{\ell}) - L(v, \tau_{\ell-}))1(B_t > 0 \mbox{ for } t \in  (\tau_{\ell-}, \tau_{\ell})),
\end{equation}
where the sum is over the random countable set of $\ell$ with $\tau_{\ell-} < \tau_{\ell}$, and the only terms which contribute to the sum are
those corresponding to positive excursions. 

\quad Consider now a random time $T$ contained in the first negative excursion of some particular kind, whose rate per unit local time is say $\xi$, the way that
\begin{itemize}[itemsep = 3 pt]
\item
the time $T = T_{-x}$ is contained in the first excursion to reach level $-x < 0$ with rate $\xi = \frac{1}{2x}$ (see e.g. \cite[Chapter XII, Exercise 2.10]{RY99});
\item
the time $T = 2 \gamma_{\tau^-}$ is contained in the first negative excursion to include an independent Poisson mark at rate $\frac{1}{2}$ per unit original time, which corresponds to some rate $\xi$ of marked excursions per unit local time, with $\xi$ to be determined.
\end{itemize}
By It\^o's description of Brownian excursion process and basic properties of Poisson processes, 
there is the well established argument \cite[Section 49]{RWvol2} that $L(0,T)$ is exponential distributed, and $L(0,T)$ is independent of all positive excursions and their marks.
It then follows from the decomposition \eqref{eq:localtdecomp} that the distribution of the local time process $(L(v, T), \, v \ge 0)$ is the same 
for all such random times $T$ whose $L(0,T)$ has the same rate $\xi$ per unit local time.
According to Levy's formula for the Laplace transform of $\tau_\ell$ (see e.g. \cite[Chapter II, Proposition 3.7]{RY99}),
for $\lambda >0$,
\begin{equation*}
\mathbb{P}(\tau_\ell < \lambda^{-1} \gamma_1) = \mathbb{E}e^{-\lambda \tau_\ell} = e^{-\ell \sqrt{2 \lambda}}.
\end{equation*}
This formula shows that for the present choice of Poisson marking rate $\lambda = 1/2$, 
the rate per unit local time is $\xi = \sqrt{2 \frac{1}{2}} = 1$, hence the rate of negative marked excursions per unit local time is $\frac{1}{2} \times 1 = \frac{1}{2}$.
As noted above, this is also the rate of excursions that reach $-1$ per unit local time. 
Consequently. the local time process on the right side of \eqref{eq:54} has the same distribution as the local time process described by the Ray-Knight identity \eqref{eq:RK0} for $x = 1$, hence the conclusion.

\quad (2) Suppose first that $T$ is either a fixed time, or a random time independent of $B$. 
Then conditionally given $T$ and the path of $B$ on $[0,T]$, the points $2 \gamma_n$ with $2 \gamma_n \le T$ are the points of a Poisson process with intensity $\frac{1}{2} 1(t \le T) dt$ on $(0, \infty)$.
By definition of the local time process as occupation density, the image of the measure $\frac{1}{2} 1(t \le T) dt$ via the continuous mapping $t \to B_t$ is $\frac{1}{2} L(x,T) dx$ on $(-\infty, \infty)$.
It follows from the mapping theorem for Poisson processes \cite[p. 18]{Kingman93} that conditionally given $T$ and the path of $B$ on $[0,T]$, 
the point process with points at $S_n: = B_{2 \gamma_n}$ for $n$ with $2 \gamma_n \le T$ is a Poisson process with intensity $\frac{1}{2}L(x,T)dx$ on $(-\infty, \infty)$.
That is to say, the point process 
\begin{equation*}
N_T(\cdot):= \sum_{n \ge 1} 1(2 \gamma_n \le T, \, B_{2 \gamma_n} \in \cdot),
\end{equation*}
is a Cox process with random intensity measure which is the occupation measure of $B$ on $[0,T]$ with continuous density $\frac{1}{2} L(x,T) dx$, $x \in (-\infty, \infty)$.

\quad This assertion is false for random times $T$ such that $T = 2 \gamma_{\tau^-}$, the time of the first Poisson sampling point $2 \gamma_n$ with $B_{2 \gamma_n} \le 0$: the restriction of $N_T(\cdot)$ to $(-\infty, 0]$ is not a Cox process since it has exactly one point. 
Still, by the independence of the processes of positive and negative excursions and their associated Poisson marks, it can be argued as above that for this $T = 2 \gamma_{\tau^-}$, the only way in which the sampling rule affects the distribution of marks over times $t < T$ when $B_t > 0$ is through the occupation measure of $B$ on $(0, \infty)$ up to time $T$, as encoded by the local time process $(L(v,T), \, v \ge 0)$.
So for $T = 2 \gamma_{\tau^-}$, the restriction of $N_T(\cdot)$ to $(0, \infty)$ is a Cox process as claimed. 
The identity in law \eqref{eq:55} then follows from \eqref{eq:54}.

\quad (3) It follows from part (2) and the generating function \eqref{eq:26} for a Cox process that
\begin{equation}
\label{eq:58}
\mathbb{E}z^{N_{\tiny \mbox{des}}(v)} = \mathbb{E} \left[-\frac{1}{2}(1-z)\int_0^v Q_0(2 \gamma_1, s)ds \right].
\end{equation}
Specializing the joint Laplace formula \eqref{eq:jointLaplace} to $\delta = 0$ and $\alpha = 0$, we get
\begin{equation}
\label{eq:59}
\begin{aligned}
\mathbb{E} \left[-\frac{\beta^2}{2}\int_0^v Q_0(2 \gamma_1, s)ds \right] &= \int_0^{\infty} \exp\left(-\frac{x}{2} \beta \tanh \beta v \right) \cdot \frac{1}{2} e^{-x/2}dx \\
& = (1 + \beta \tanh \beta v)^{-1},
\end{aligned}
\end{equation}
which by injecting into \eqref{eq:58} with $\beta^2 = 1-z$ yields the formula \eqref{eq:Laplace56}.
\end{proof}

\quad Now we provide several applications of \eqref{eq:Laplace56} and derive explicit formulas for the point process $\{M_1, \ldots, N_{\nu}\}$.
First by noting that $\nu = N_{\tiny \mbox{des}}(\infty)$, we get
\begin{align*}
\mathbb{E}z^{\nu} = \lim_{v \to \infty} \mathbb{E}z^{N_{\tiny \mbox{des}}(v)} = \lim_{v \to \infty} (1+ \sqrt{1-z} \tanh v \sqrt{1-z})^{-1} = (1+ \sqrt{1-z})^{-1},
\end{align*}
which recovers the formula \eqref{eq:nuPGF} for the probability generating function of $\nu$.
Further by expanding  \eqref{eq:Laplace56} in powers of $z$, we obtain the probabilities for $v \ge 0$,
\begin{align*}
& \mathbb{P}(N_{\tiny \mbox{des}}(v) = 0) = \frac{1}{2}(1+e^{-2v}), \quad \mathbb{P}(N_{\tiny \mbox{des}}(v) = 1) = \frac{1}{8}(1+ 4ve^{-2v} -e^{-4v}), \\
& \mathbb{P}(N_{\tiny \mbox{des}}(v) = 2) = \frac{1}{32}\left(2+ (-1+4v + 8v^2) e^{-2v} - (2+8v)e^{-4v}+e^{-6v}\right),
\end{align*}
and so on.
It develops that $\mathbb{P}( N_{\tiny \mbox{des}}(v)= k)$ is for each $k$ a signed linear combination of exponentials $e^{-2jv}$ for $j = 0, \ldots, k+1$ with coefficients that are polynomials in $v$.
But the exact form of the coefficients involved does not seem to be obvious.
These probabilities determine the law of $M_k$'s by the basic duality relation
\begin{equation*}
\mathbb{P}(M_k > v) = \mathbb{P}(N_{\tiny \mbox{des}}(v) < k) = \sum_{j = 0}^{k-1} \mathbb{P}(N_{\tiny \mbox{des}}(v) = j),
\end{equation*}
where by convention $M_k = \infty$ if $\nu < k$, so the event $\{M_k > v\}$ includes the event $\{M_k = \infty\} = \{\nu < k\}$.
Thus,
\begin{align*}
& \mathbb{P}(M_1 > v) = \frac{1}{2}(1+e^{-2v}), \quad \mathbb{P}(M_2 > v) = \frac{1}{8}(5+ 4(1+v)e^{-2v} -4e^{-4v}), \\
& \mathbb{P}(M_3 > v) = \frac{1}{32}\left(22+ (15+20v + 8v^2) e^{-2v} - (6+8v)e^{-4v}+e^{-6v}\right),
\end{align*}
and so on, where the constant coefficients obtained in the limit as $v \to \infty$ are the cumulative probabilities in the distribution of $\nu$,
that is,
\begin{align*}
&\mathbb{P}(M_1 = \infty) = \mathbb{P}(\nu = 0) = \frac{1}{2}, \quad
\mathbb{P}(M_2 = \infty) = \mathbb{P}(\nu \le 1) = \frac{1}{2} + \frac{1}{8} = \frac{5}{8}, \\
&\mathbb{P}(M_3 = \infty) = \mathbb{P}(\nu \le 2) = \frac{1}{2} + \frac{1}{8} + \frac{1}{16} = \frac{22}{32},
\end{align*}
and so on. 

\quad Note that the formula \eqref{eq:59} is the instance $x = 1$, $u = 0$ of the following more general consequence of 
the Ray-Knight identity \eqref{eq:RK0}: for $-x < 0 \le u \le v$,
\begin{equation}
\begin{aligned}
\mathbb{E} \left[-\frac{\beta^2}{2}\int_u^v Q_0(2 x\gamma_1, s)ds \right] & = 
\mathbb{E} \left[-\frac{\beta^2}{2}\int_0^{T_{-x}} 1(u < B_t \le v)dt \right] \\
& = \frac{1 + u\beta \tanh(v-u)\beta}{1+(u+x)\beta \tanh(v-u)\beta},
\end{aligned}
\end{equation}
where the second equality is read from \cite[p. 203 (2.7.1)]{BS96}.
When evaluated at $x = 1$ and $\beta = \sqrt{1-z}$, this identity gives the probability generating function of 
$N_{\tiny \mbox{des}}(u,v]:= \sum_{k=1}^\nu 1(u < S_k \le v) = \sum_{k=1}^\nu 1(u < M_k \le v)$,
the number of steps of the symmetric Laplace walk in $(u,v]$ before its first step into the negative half line:
\begin{equation}
\label{eq:uvPGF}
\mathbb{E}z^{N_{\tiny \mbox{des}}(u,v]} = \frac{1+u\sqrt{1-z} \tanh(v-u)\sqrt{1-z}}{1+ (u+1)\sqrt{1-z} \tanh(v-u)\sqrt{1-z}}.
\end{equation}
Expanding \eqref{eq:uvPGF} in powers of $z$ gives 
\begin{align*}
&\mathbb{P}(N_{\tiny \mbox{des}}(u,v] = 0) = \frac{1+u \tanh(v-u)}{1+ (1+u) \tanh(v-u)}, \\
&\mathbb{P}(N_{\tiny \mbox{des}}(u,v] = 1) = \frac{\tanh(v-u) + (v-u)(1-\tanh^2(v-u))}{2(1+ (1+u) \tanh(v-u))^2},
\end{align*}
and so on, with progressively more complex ratios of polynomials in $u, v$ and $\tanh(v-u)$.
Similarly, by expanding in powers of $1-z$, we get
\begin{equation*}
\mathbb{E}N_{\tiny \mbox{des}}(u,v] = v-u, \quad \mathbb{E}N^2_{\tiny \mbox{des}}(u,v] =\frac{2}{3}(v-u)^2(3+2u+v)+(v-u),
\end{equation*}
and so on. 
In the limiting case $v = \infty$, we derive the probability generating function of $N_{\tiny \mbox{des}}(u,\infty)$:
\begin{equation}
\mathbb{E}Z^{N_{\tiny \mbox{des}}(u,\infty)} = \frac{1 + u \sqrt{1-z}}{1 + (u+1)\sqrt{1-z}},
\end{equation}
which by expanding in powers of $z$ gives
\begin{align*}
&\mathbb{P}(N_{\tiny \mbox{des}}(u,\infty) = 0) = \frac{1+u}{2+u}, \quad 
\mathbb{P}(N_{\tiny \mbox{des}}(u,\infty) = 1) = \frac{1}{2(2+u)^2}, \\
&\mathbb{P}(N_{\tiny \mbox{des}}(u,\infty) = 2) = \frac{4+3u}{8(2+u)^3}, \quad 
\mathbb{P}(N_{\tiny \mbox{des}}(u,\infty) = 3) = \frac{10+14u+5u^2}{16(2+u)^4}, 
\end{align*}
and so on. 
In particular, we recover the formula \eqref{eq:maxMnu} for the law of $M_{\nu}$, the largest point of $\{S_1, \ldots, S_{\nu}\}$ by observing that
$\mathbb{P}(M_{\nu} > t) = 1 - \mathbb{P}(N_{\tiny \mbox{des}}(t,\infty) = 0)$.

\section{Renewal cluster process, squared Bessel process and Laplace walk}
\label{sc6}

\quad In this section, we study the counting process $N_W(v): = \sum_{k \ge 1} 1(W_k \le v)$, $v \ge 0$
for $(W_k, \, k \ge 1)$ defined by \eqref{eq:Wkrep} as the sequence of limits in law of $(M_{k,n} - M_{k,0}, \, k \ge 1)$
for the symmetric Laplace walk $S$.
We also make a connection between $(N_W(v), \, v \ge 0)$ and $(N_{\tiny \mbox{des}}(v), \, v \ge 0)$, the counting process of the Laplace walk before the first descending ladder time $\tau^-: = \nu+1$, 
and then prove Theorem \ref{thm:main} as a byproduct.

\quad For a general random walk, the counting process $(N_W(v), \, v \ge 0)$ is the sum of two independent components
\begin{equation}
N_W(v) = N^{\uparrow}(v) + N^{\downarrow}(v) \quad \mbox{for } v \ge 0,
\end{equation}
where $N^{\uparrow}(v):= \sum_{i =1}^\infty 1(S^{\uparrow}_i \le v)$ and $N^{\downarrow}(v):= \sum_{i =1}^\infty 1(-S^{\downarrow}_i \le v)$ are counting processes of the upward and downward Feller chains (see Section \ref{sc21}).
Moreover, if increment distribution of the walk is symmetric and continuous, these two counting processes are identically distributed. 
By Tanaka's decomposition of $S^{\uparrow}$ and $S^{\downarrow}$ (Lemma \ref{lem:clusterTanaka}), 
each of the counting processes $N^{\uparrow}$ and $N^{\downarrow}$ is a renewal cluster process. 
In general, the laws of these renewal cluster processes may be complicated.
For the symmetric Laplace walk, the structure of these counting processes is greatly simplified because the underlying renewal processes are Poisson. 

\quad Recall that $(Q_{\delta}(X, t), \, t \ge 0 )$ is a squared Bessel process of dimension $\delta$ with initial state $X$,
and $\gamma_1 < \gamma_2 < \cdots$ are points in a Poisson process with rate $1$ on $(0, \infty)$.
The following theorem identifies the counting process $(N_W(v), \, v \ge 0)$ as a Poisson cluster process, which in turn is
a Cox process driven by some functional of the $\mbox{BESQ}_4$ process. 

\begin{theorem}
\label{thm:Poissoncluster}
For $\lambda > 0$, let $(N_{\lambda}(v), \, v \ge 0)$ be the Poisson cluster process, with cluster centers at the points 
$\lambda^{-1} \gamma_1 < \lambda^{-1} \gamma_2 < \cdots$ of a Poisson process with rate $\lambda$ on $(0,\infty)$,
and a cluster on $[\lambda^{-1}\gamma_k, \infty)$ for each $k$ that is a copy of $N_{\tiny \mbox{des}}$.
That is,
\begin{equation}
N_{\lambda}(v):= \sum_{k = 1}^\infty \sum_{i = 0}^{\nu_k} 1(\lambda^{-1} \gamma_k + M_{k,i} \le v) \quad \mbox{for } v \ge 0,
\end{equation}
where $(M_{k,i}, \, 0 \le i \le \nu_k)$ for $k = 1, 2, \ldots$ is a sequence of independent copies of $(M_i, \, 0 \le i \le \nu)$,
the process of order statistics of $(S_i, \, 0 \le i \le \nu)$ for the symmetric Laplace walk $S$. 
Then
\begin{enumerate}[itemsep = 3 pt]
\item
The Poisson cluster process $(N_{\lambda}(v), \, v \ge 0)$ is a Cox process driven by
\begin{equation}
\label{eq:Qdiffstart}
\left(\frac{1}{2} Q_{2 \lambda}(2\gamma_{\lambda},v), \, v \ge 0 \right) \stackrel{d}{=} \left(\frac{1}{2} Q_{2 \lambda}(0,1+v), \, v \ge 0 \right),
\end{equation}
where $\gamma_{\lambda}$ is understood as a gamma random variable with density $\frac{1}{\Gamma(\lambda)} x^{\lambda-1}e^{-x}$, $x > 0$.
Consequently, for each $v>0$ the probability generating function of $N_{\lambda}(v)$ is
\begin{equation}
\label{eq:NlamPGF}
\mathbb{E}z^{N_{\lambda}(v)} = (\cosh v\sqrt{1-z} + \sqrt{1-z} \sinh v\sqrt{1-z})^{-\lambda}.
\end{equation}
\item
There are the identities in law of processes
\begin{align}
& (N^{\uparrow}(v), \, v \ge 0) \stackrel{d}{=} (N^{\downarrow}(v), \, v \ge 0) \stackrel{d}{=} (N_1(v), \, v \ge 0), \\
& (N_W(v), \, v \ge 0) \stackrel{d}{=} (N_2(v), \, v \ge 0).
\end{align}
So the probability generating functions of $N^{\uparrow}(v)$ and $N_W(v)$ are given by \eqref{eq:NlamPGF} for $\lambda = 1$ and $\lambda = 2$ respectively.
\item
The points $W_1 < W_2 < \ldots$ of the Cox process $N_W$ driven by $\left(\frac{1}{2} Q_{4}(0,1+v), \, v \ge 0 \right)$ may be constructed as the differences $T_2 - T_1 < T_3 - T_2 < \cdots$ for $0< T_1 < T_2 < \cdots$ the points of a Cox process driven by $\left(\frac{1}{2} Q_{4}(0,v), \, v \ge 0 \right)$.
\end{enumerate}
\end{theorem}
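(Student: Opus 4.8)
The plan is to compute generating functions from the Poisson cluster structure and match them against the squared Bessel Laplace transform \eqref{eq:jointLaplace}; Tanaka's decomposition then reduces $N_W$ to that setting, and a Markov restart at the first Cox point gives the last assertion.

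\emph{Part (1).} I would begin from the Poisson cluster description. Conditioning on the rate-$\lambda$ Poisson process of cluster centres $\lambda^{-1}\gamma_1<\lambda^{-1}\gamma_2<\cdots$ and using that the clusters are independent copies of the order-statistics process $\{M_0,\ldots,M_\nu\}$ (the point $M_0=0$ contributing an extra factor $z$ at each centre), the Poisson marking formula gives for each $v>0$
\[
\mathbb{E}z^{N_\lambda(v)}=\exp\!\Big(-\lambda\int_0^v\big(1-z\,\mathbb{E}z^{N_{\tiny \mbox{des}}(u)}\big)\,du\Big).
\]
Inserting $\mathbb{E}z^{N_{\tiny \mbox{des}}(u)}=(1+\sqrt{1-z}\tanh u\sqrt{1-z})^{-1}$ from \eqref{eq:Laplace56} and writing $\beta=\sqrt{1-z}$, the integrand simplifies (using $1-z=\beta^2$) to $\tfrac{d}{du}\log(\cosh\beta u+\beta\sinh\beta u)$, so the integral equals $\log(\cosh\beta v+\beta\sinh\beta v)$ and \eqref{eq:NlamPGF} follows. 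To recognise $N_\lambda$ as a Cox process I would compare \eqref{eq:NlamPGF} with the Cox generating function \eqref{eq:26}: specialising the joint Laplace formula \eqref{eq:jointLaplace} to $\delta=2\lambda$ and $\alpha=0$ gives $\mathbb{E}\exp(-\tfrac{\beta^2}{2}\int_0^v Q_{2\lambda}(x,u)\,du)=(\cosh\beta v)^{-\lambda}\exp(-\tfrac{x\beta}{2}\tanh\beta v)$, and averaging the starting state $x=2\gamma_\lambda$ against $\mathbb{E}e^{-s\gamma_\lambda}=(1+s)^{-\lambda}$ turns this into $(\cosh\beta v+\beta\sinh\beta v)^{-\lambda}$, precisely the generating function of a Cox process driven by $(\tfrac12 Q_{2\lambda}(2\gamma_\lambda,v),v\ge0)$. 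The restart identity $Q_{2\lambda}(2\gamma_\lambda,\cdot)\stackrel{d}{=}Q_{2\lambda}(0,1+\cdot)$ is then the Markov property of the squared Bessel process together with $Q_{2\lambda}(0,1)\stackrel{d}{=}2\gamma_\lambda$, a chi-squared identity extended to non-integer $\lambda$ through the additivity \eqref{eq:additiveSB}. Finally, to pass from the one-level identity to an identity in law of processes I would run the same computation jointly at levels $v_1<\cdots<v_n$, using the multi-level generating function \eqref{eq:uvPGF} of $N_{\tiny \mbox{des}}$ on the cluster side and the Markov property of $Q_{2\lambda}$ on the squared Bessel side; matching all finite dimensional generating functions determines the law of $N_\lambda$ and identifies it with that of the asserted Cox process.

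\emph{Part (2).} By Tanaka's decomposition (Lemma~\ref{lem:clusterTanaka}) the counting process $N^\uparrow$ is a renewal cluster process whose clusters are independent copies of the occupation process $N^+$ of $-S$ before its first ascending ladder epoch, and by the symmetry $S\leftrightarrow-S$ of the Laplace walk $N^+$ has the law of the order-statistics process $\{M_0,\ldots,M_\nu\}$, i.e.\ of the cluster of $N_1$. The feature special to the Laplace case is that the underlying renewal process of cluster positions is Poisson: this follows because the first ascending ladder height of the Laplace walk is standard exponential (Lemma~\ref{lem:Laplacedes}(2) and symmetry), equivalently because the independent exponential sampling in the embedding \eqref{eq:embedding} makes this renewal process Poisson, exactly as in the proof of Theorem~\ref{thm:SBdescription}. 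Hence $N^\uparrow\stackrel{d}{=}N_1$, and $N^\downarrow\stackrel{d}{=}N_1$ by symmetry. Since $N_W=N^\uparrow+N^\downarrow$ with independent summands, $N_W$ is the superposition of two independent copies of $N_1$; merging the two independent rate-$1$ Poisson families of centres into a single rate-$2$ family, each still carrying an independent copy of the cluster, shows $N_W\stackrel{d}{=}N_2$. The asserted generating functions are \eqref{eq:NlamPGF} at $\lambda=1$ and $\lambda=2$.

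\emph{Part (3).} Let $\widetilde N$ be a Cox process driven by $(\tfrac12 Q_4(0,v),v\ge0)$, with ordered points $0<T_1<T_2<\cdots$. I would first find the law of $Q_4(0,T_1)$: from the formula \eqref{eq:214} for the Laplace transform of $X(T_{\theta,1})$ on $\{T_{\theta,1}\in dt\}$ (taking $X=Q_4(0,\cdot)$ and $\theta=\tfrac12$) together with \eqref{eq:jointLaplace} at $\delta=4$, $x=0$, integrating over $t\in(0,\infty)$ yields $\mathbb{E}e^{-\alpha Q_4(0,T_1)}=(1+2\alpha)^{-2}$, so $Q_4(0,T_1)\stackrel{d}{=}2\gamma_2$. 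By the construction \eqref{eq:29} of a Cox process from a rate-$1$ Poisson stream of marks, the points of $\widetilde N$ in $(T_1,\infty)$, recentred at $T_1$, form a Cox process driven by $(\tfrac12 Q_4(0,T_1+v),v\ge0)$; by the strong Markov property of the squared Bessel process at the (randomised) stopping time $T_1$ and the previous step, this driving process has the law of $(\tfrac12 Q_4(2\gamma_2,v),v\ge0)\stackrel{d}{=}(\tfrac12 Q_4(0,1+v),v\ge0)$, which by parts (1)--(2) drives $N_W$. Therefore the recentred process has the law of $N_W$: setting $W_k:=T_{k+1}-T_1$ for $k\ge1$ (and $W_0:=0$) recovers the points of $N_W$, and the consecutive spacings of $N_W$ are realised as the consecutive spacings $T_{k+1}-T_k$ of $\widetilde N$, which is the claimed construction.

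\emph{Main obstacle.} The step I expect to cost the most is, in part (1), checking that $N_\lambda$ genuinely \emph{is} a Cox process with the continuous intensity $\tfrac12 Q_{2\lambda}(2\gamma_\lambda,\cdot)$. The intensity suggested by the construction --- the constant $\lambda$ from the cluster centres plus a superposition of time-shifted $\mbox{BESQ}_0$ clusters --- has upward jumps at the random centres and so cannot agree pathwise with a continuous $\mbox{BESQ}_{2\lambda}$; one has to show that the centres dissolve into the Poisson randomness of the $\mbox{BESQ}_{2\lambda}$-driven Cox process, which is why matching \emph{all} finite dimensional generating functions, rather than any pathwise argument, is forced on us. The parallel delicate point in part (2) is the claim that Tanaka's renewal mechanism collapses to a Poisson process for the Laplace walk; this is exactly where the exponential ascending ladder heights of Lemma~\ref{lem:Laplacedes} --- equivalently the independent exponential sampling built into \eqref{eq:embedding} --- do the essential work.
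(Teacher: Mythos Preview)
Your proposal is correct and follows essentially the same route as the paper. In part~(1) you reproduce the paper's computation verbatim: the Poisson cluster generating function, insertion of \eqref{eq:Laplace56}, the logarithmic antiderivative, and the match against \eqref{eq:jointLaplace} averaged over $2\gamma_\lambda$. In part~(2) you invoke Tanaka's decomposition and the exponential ladder height exactly as the paper does. In part~(3) both arguments reduce to showing $Q_4(0,T_1)\stackrel{d}{=}2\gamma_2$; the paper conditions on $Q_4(0,t)=x$ and cites the bridge formula \cite[(2.m)]{PY82}, whereas you differentiate \eqref{eq:jointLaplace} directly in $\alpha$ at $\delta=4$, $x=0$, $\beta=1$ and integrate $4\sinh t/(\cosh t+2\alpha\sinh t)^3$ over $t$ --- this is the same integral after the substitution $u=\coth t$ and is arguably the cleaner route. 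Your explicit flag that the process-level identification in part~(1) requires matching all finite-dimensional generating functions, not just the one-level one, is a point the paper passes over silently; your plan to handle it via \eqref{eq:uvPGF} and the Markov property of $Q_{2\lambda}$ is sound.
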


\begin{proof}
(1) Attaching i.i.d.  clusters to the points of a Poisson process is a well known mechanism for generating an infinitely divisible point process, whose L\'evy-Khintchine representation is easily expressed in terms of the probability generating function of counts of points in the clusters (see e.g. \cite[Section 3.3]{Kal17}).
For the Poisson cluster process $N_{\lambda}$, the probability generating function of clusters is given by \eqref{eq:Laplace56},
so
\begin{align*}
\mathbb{E}z^{N_{\lambda}(v)} &= \exp\left( - \lambda \int_0^v \left(1- \mathbb{E}z^{1+N_{\tiny \mbox{des}}(u)}\right)du\right) \\
& = \exp\left(-\lambda \int_0^v 1 - z(1+ \sqrt{1-z}\tanh u \sqrt{1-z})^{-1} du \right) \\
& = (\cosh v\sqrt{1-z} + \sqrt{1-z} \sinh v\sqrt{1-z})^{-\lambda},
\end{align*}
where in the first equation $M_0 = S_0$ is counted, hence $\sum_{0 \le i \le \nu} 1(S_i \le v) = 1+N_{\tiny \mbox{des}}(u)$,
and the last equation is obtained by elementary calculus
\begin{equation*}
\frac{d}{dv} \log(\cosh v\beta + \beta \sinh v\beta)|_{\beta = \sqrt{1-z}} = 1 - \frac{1-\beta^2}{1 + \beta \tanh v\beta}|_{\beta = \sqrt{1-z}}.
\end{equation*}
Further by the Laplace transform \eqref{eq:jointLaplace}, we get the generating function for a Cox process driven by 
$\left(\frac{1}{2} Q_{2 \lambda}(2\gamma_{\lambda},v), \, v \ge 0 \right)$:
\begin{align*}
& \quad \,\, \mathbb{E}\exp \left[-\frac{1}{2} \beta^2\int_0^v Q_{2 \lambda}(2\gamma_{\lambda},u)du \right]_{\beta = \sqrt{1-z}} \\
& = (\cosh v \beta)^{-\lambda} \int_0^{\infty}\frac{1}{2 \Gamma(\lambda)} \left(\frac{x}{2}\right)^{\lambda-1} \exp\left(-\frac{x}{2}(1 + \beta \tanh v \beta) \right) dx|_{\beta = \sqrt{1-z}} \\
& = (\cosh v \beta)^{-\lambda} (1+\beta \tanh v \beta)^{-\lambda}|_{\beta = \sqrt{1-z}},
\end{align*}
which identifies the Poisson cluster process $N_{\lambda}$ as a Cox process driven by $\left(\frac{1}{2} Q_{2 \lambda}(2\gamma_{\lambda},v), \, v \ge 0 \right)$.
The identity in law \eqref{eq:Qdiffstart} follows from the Markov property of $\mbox{BESQ}_{2 \lambda}$, 
and the fact that $2 \gamma_{\lambda} \stackrel{d}{=} Q_{2 \lambda}(0,1)$
(see e.g. \cite[Chapter XI, Corollary 1.4]{RY99}).

\quad (2) The fact that $N^{\uparrow}$ is a renewal cluster process follows from Lemma \ref{lem:clusterTanaka}, Tanaka's decomposition of $S^{\uparrow}$.
By Lemma \ref{lem:Laplacedes}, the cluster centers of $N^{\uparrow}$ are the points of a Poisson process with rate $1$.
As a result, $N^{\uparrow}$ is a Poisson cluster process with rate $1$.
As a sum of two independent copies of $N^{\uparrow}$, the process $N_W$ is a Poisson cluster process with rate $2$. 

\quad (3) In view of the Markov property of $\mbox{BESQ}_4$, to establish this representation it suffices to show that the time of the first point $T_1$ in a Cox process driven by $\left(\frac{1}{2} Q_{4}(0,v), \, v \ge 0 \right)$ is such that
\begin{equation}
\label{eq:67}
Q_4(0,T_1) \stackrel{d}{=} 2 \gamma_2 \stackrel{d}{=} Q_4(0,1).
\end{equation}
The second equality follows from the general fact that $2 \gamma_{\lambda} \stackrel{d}{=} Q_{2 \lambda}(0,1)$, $\lambda > 0$ as proved in part (1).
By \eqref{eq:214}, the Laplace transform of $Q_4(0,T_1)$ is 
\begin{align*}
\mathbb{E}e^{-\alpha Q_4(0,T_1)} &= \frac{1}{2} \int_0^{\infty} \mathbb{E}\left[ e^{-\alpha Q_4(0,t)} Q_4(0,t) e^{-\frac{1}{2} \int_0^t Q_4(0,v)dv} \right] dt \\
& = \frac{1}{2} \int_0^{\infty} \int_{x > 0} xe^{-\alpha x} \mathbb{E}\left(e^{-\frac{1}{2} \int_0^t Q_4(0,v)dv} \bigg| Q_4(0,t) = x\right) \mathbb{P}(Q_4(0,t) \in dx)dt \\
& = \frac{1}{2} \int_0^{\infty} \int_{0}^\infty xe^{-\alpha x} \left(\frac{t}{\sinh t}\right)^2 \exp\left(\frac{x}{2t}(1 - t \coth t) \right)\cdot \frac{x}{4t^2}\exp\left(-\frac{x}{2t} \right) dx dt \\
& = 2 \int_0^\infty \frac{1}{\sinh^2t} \frac{1}{(2 \alpha + \coth t)^3}dt = \frac{1}{(1+2\alpha)^2},
\end{align*}
where in the third equality the expression for $\mathbb{E}(e^{-\frac{1}{2} \int_0^t Q_4(0,v)dv} \,|\, Q_4(0,t) = x)$ is read from \cite[(2.m)]{PY82}.
This proves the first equality in \eqref{eq:67}.
\end{proof}

\quad Note that the key to the proof of Theorem \ref{thm:Poissoncluster} (3) is the fact that $Q_4(0, T_1) \stackrel{d}{=} 2 \gamma_2$.
This identity in law can also be recognized by the Poisson embedding of the symmetric Laplace walk in Brownian motion, 
the Williams decomposition of Brownian motion at its minimum, and the Ray-Knight-Williams description of local times near a local minimum in terms of the $\mbox{BESQ}_4$ process.
We will further discuss this aspect in Section \ref{sc7}.

\quad The next corollary identifies the law of the point process $\{W_1, W_2, \ldots\}$ with that corresponding to the order statistics of a `long' or a `high' excursion.
\begin{corollary}
\label{coro:identify2}
Let $N_W(v): = \sum_{k=1}^{\infty} 1(W_k \le v)$, $v \ge 0$ be the counting process of $(W_k, \, k \ge 1)$ defined by \eqref{eq:Wkrep} as the sequence of limits in law of $(M_{k,n} - M_{k,0}, \, k \ge 1)$
for the symmetric Laplace walk $S$.
Let $N_{\tiny \mbox{des}}(v):= \sum_{k=1}^{\infty} 1(S_k \le v, \, k< \tau^-)$ be the counting process of the order statistics $\{M_1, \ldots, M_{\tau^- - 1}\}$ of $S$ prior to $\tau^-$.
Then there is the convergence in total variation norm
\begin{equation}
(N_{\tiny \mbox{des}}(v), \, v \ge 0 \,|\, A_m) \stackrel{d}{\longrightarrow} (N_W(v), \, v \ge 0) \quad \mbox{as } m \to \infty,
\end{equation}
where $(A_m, \, m \ge 1)$ is either of the following two sequences of events: $A_m = \{\tau^- > m\}$ (`long' excursion)
or $A_m = \{\max_{k < \tau^-}S_k > m\}$ (`high' excursion).
For each finite $K$, there is also the convergence in total variation norm
\begin{equation}
(M_k,, 1 \le k \le K \,|\, A_m) \stackrel{d}{\longrightarrow} (W_k, \, 1 \le k \le K) \quad \mbox{as } m \to \infty.
\end{equation}
\end{corollary}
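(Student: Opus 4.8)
The plan is to fix $w>0$ and prove convergence in total variation of the law of $(N_{\tiny \mbox{des}}(v),\,0\le v\le w)$ conditioned on $A_m$ to the law of $(N_W(v),\,0\le v\le w)$; since $w$ is arbitrary this is what the asserted convergence of processes means, and the assertion about the first $K$ order statistics then follows from $\{M_K\le w\}=\{N_{\tiny \mbox{des}}(w)\ge K\}$, $\{W_K\le w\}=\{N_W(w)\ge K\}$, together with $\mathbb{P}(W_K\le w)\to1$ as $w\to\infty$ and $\mathbb{P}(\nu\ge K\,|\,A_m)\to1$ as $m\to\infty$ (trivial when $A_m=\{\tau^->m\}$, and when $A_m=\{\max_{k<\tau^-}S_k>m\}$ using that, given $\nu=j$, the top point $M_j$ is a sum of at most $j$ exponentially-tailed holding times). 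Throughout I use Theorem \ref{thm:SBdescription}, which represents $N_{\tiny \mbox{des}}$ as a Cox process driven by $\tfrac12 Q_0(2\gamma_1,\cdot)$, and Theorem \ref{thm:Poissoncluster}, which represents $N_W$ as a Cox process driven by $\tfrac12 Q_4(0,1+\cdot)\stackrel{d}{=}\tfrac12 Q_4(2\gamma_2,\cdot)$.

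The heart of the argument is the Radon--Nikodym derivative of $\mathbb{P}(\cdot\,|\,A_m)$ with respect to $\mathbb{P}$ on the $\sigma$-field $\mathcal{H}_w:=\sigma(N_{\tiny \mbox{des}}(v):0\le v\le w)$, namely $f_m:=\mathbb{P}(A_m\,|\,\mathcal{H}_w)/\mathbb{P}(A_m)$, which I claim converges in $L^1(\mathbb{P})$, for \emph{both} sequences $(A_m)$, to $f:=\tfrac12\,\mathbb{E}[Q_0(2\gamma_1,w)\,|\,\mathcal{H}_w]$ (note $\mathbb{E}f=\tfrac12\mathbb{E}[2\gamma_1]=1$ by the martingale property of $\mbox{BESQ}_0$). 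To compute $\mathbb{P}(A_m\,|\,\mathcal{H}_w)$ one conditions first on the driving path $Q_0(2\gamma_1,\cdot)$ and then uses that, given $Q_0(2\gamma_1,w)$, the Cox points in $[0,w]$ are independent of $(Q_0(2\gamma_1,v))_{v\ge w}$, a fresh $\mbox{BESQ}_0$ started from $Q_0(2\gamma_1,w)$. For the high-excursion event $A_m=\{\max_{k<\tau^-}S_k>m\}$ (i.e.\ $N_{\tiny \mbox{des}}$ has a point in $(m,\infty)$) this gives $\mathbb{P}(A_m\,|\,\mathcal{H}_w)=\mathbb{E}[g_{m-w}(Q_0(2\gamma_1,w))\,|\,\mathcal{H}_w]$ with $g_s(y):=1-e^{-y/(2(s+1))}$, obtained by specializing the joint Laplace transform \eqref{eq:jointLaplace} to $\delta=0$ (noting $\mathbb{E}\,e^{-\frac12\int_0^\infty Q_0(y,u)\,du}=e^{-y/2}$), while $\mathbb{P}(A_m)=\mathbb{P}(M_\nu>m)=1/(2+m)$ by \eqref{eq:maxMnu}; the elementary estimate $(2+m)\,g_{m-w}(y)\to y/2$, dominated by $Cy$, then yields $f_m\to f$ in $L^1(\mathbb{P})$. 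For the long-excursion event $A_m=\{\tau^->m\}=\{\nu\ge m\}$ the same conditioning gives $\mathbb{P}(A_m\,|\,\mathcal{H}_w)=\mathbb{E}[\Phi_{m-N_{\tiny \mbox{des}}(w)}(Q_0(2\gamma_1,w))\,|\,\mathcal{H}_w]$ with $\Phi_k(y):=\mathbb{P}_y(\nu\ge k)$ for $\mbox{BESQ}_0$ from $y$; here I use $\mathbb{E}_y z^\nu=e^{-\frac{y}{2}\sqrt{1-z}}$ (again from \eqref{eq:jointLaplace}), a Tauberian argument giving $\Phi_k(y)\sim y/(2\sqrt{\pi k})$, the scaling identity $\int_0^\infty Q_0(y,u)\,du\stackrel{d}{=}y^2\int_0^\infty Q_0(1,u)\,du$ which yields a bound $\Phi_k(y)\le C\min(1,y/\sqrt k)$ uniform in $y$, and $\mathbb{P}(A_m)=\mathbb{P}(\tau^->m)=\binom{2m}{m}2^{-2m}\sim1/\sqrt{\pi m}$ from \eqref{eq:PGFtau}; combined with the almost sure finiteness of $N_{\tiny \mbox{des}}(w)$---which in fact has exponential tails, since the generating function \eqref{eq:Laplace56} has radius of convergence strictly larger than $1$---this again gives $f_m\to f$ in $L^1(\mathbb{P})$. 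By Scheff\'e's lemma the conditional law of $(N_{\tiny \mbox{des}}(v),\,0\le v\le w)$ given $A_m$ converges in total variation to the law $\mathbb{Q}_w$ with $d\mathbb{Q}_w/d\mathbb{P}=f$ on $\mathcal{H}_w$.

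It remains to identify $\mathbb{Q}_w$ with the restriction to $[0,w]$ of the law of $(N_W(v),\,v\ge0)$. Biasing the joint law of $\big((Q_0(2\gamma_1,v))_{v\le w},(N_{\tiny \mbox{des}}(v))_{v\le w}\big)$ by $\tfrac12 Q_0(2\gamma_1,w)$---a function of the driving path---leaves the conditional Poisson law of the points given the path unchanged and biases the law of the driving path by $\tfrac12 Q_0(2\gamma_1,w)$, whose restriction to $\mathcal{H}_w$ is exactly $f$. Since $h(x)=x$ is a genuine martingale function for $\mbox{BESQ}_0$, this Doob $h$-transform turns $(Q_0(2\gamma_1,v))_{v\le w}$ into $(Q_4(2\gamma_2,v))_{v\le w}$: the generator $2x\frac{d^2}{dx^2}$ becomes $2x\frac{d^2}{dx^2}+4\frac{d}{dx}$, and the initial density $\tfrac12 e^{-x/2}$ is size-biased to $\tfrac14 xe^{-x/2}$, the density of $2\gamma_2$; this $h$-transform identity is classical (see e.g.\ \cite{RY99}). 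Hence $\mathbb{Q}_w$ is the restriction to $[0,w]$ of the law of a Cox process driven by $\tfrac12 Q_4(2\gamma_2,\cdot)$, which by Theorem \ref{thm:Poissoncluster}(1)--(2) taken with $\lambda=2$ (using $2\gamma_2\stackrel{d}{=}Q_4(0,1)$) is exactly $(N_W(v),\,v\ge0)$. As a consistency check, the one-dimensional generating function under $\mathbb{Q}_w$, namely $\tfrac12\mathbb{E}[Q_0(2\gamma_1,w)\,z^{N_{\tiny \mbox{des}}(w)}]$, can be evaluated from \eqref{eq:jointLaplace} and equals $(\cosh w\sqrt{1-z}+\sqrt{1-z}\sinh w\sqrt{1-z})^{-2}$, agreeing with \eqref{eq:NlamPGF} at $\lambda=2$.

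The step I expect to be the main obstacle is the uniform integrability needed to pass from pointwise to $L^1$ convergence of $f_m$: this is immediate in the high-excursion case thanks to the exact formula for $g_s$, but in the long-excursion case it requires the Tauberian asymptotics of $\mathbb{P}_y(\nu\ge k)$ and of $\mathbb{P}(\tau^->m)$ to be complemented by the uniform bound on $\Phi_k(y)$ over the random range of $Q_0(2\gamma_1,w)$ together with the exponential tail of $N_{\tiny \mbox{des}}(w)$. All the other ingredients---the Markov-property and conditional-independence reductions, the Cox-process bookkeeping, and the $h$-transform identification of $\mbox{BESQ}_4$---are formal given Theorems \ref{thm:SBdescription} and \ref{thm:Poissoncluster} and the Laplace transforms recorded in Section \ref{sc2}.
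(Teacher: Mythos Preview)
Your argument is correct, and it takes a route genuinely different from the paper's. The paper proves Corollary~\ref{coro:identify2} by invoking general random-walk results: conditioned on a long or high excursion, the two ends of the walk excursion are asymptotically independent copies of the Doob-$h$ transformed walk (citing \cite{GS79} and \cite{BD94}), which matches the Feller-chain construction \eqref{eq:Wkrep} of $W_k$ directly. The paper then records only the one-dimensional generating-function check $\mathbb{E}(z^{N_{\tiny \mbox{des}}(v)}\,|\,A_m)\to(\mathbb{E}z^{N^{\uparrow}(v)})^2$, relying on the cited literature for the total-variation statement.

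Your approach instead stays entirely inside the squared-Bessel machinery already set up in Theorems~\ref{thm:SBdescription} and~\ref{thm:Poissoncluster}: you compute the Radon--Nikodym derivative of the conditioned law on $\mathcal{H}_w$, show it converges in $L^1$ to $\tfrac12\,\mathbb{E}[Q_0(2\gamma_1,w)\,|\,\mathcal{H}_w]$, and then identify the resulting size-bias as the Doob $h$-transform (with $h(x)=x$) turning $\mathrm{BESQ}_0(2\gamma_1)$ into $\mathrm{BESQ}_4(2\gamma_2)$. This is self-contained, specific to the Laplace walk, and makes the total-variation convergence explicit rather than deferring it to external references. It also exposes very cleanly the diffusion-level analogue of the random-walk $h$-transform the paper is really using: conditioning the excursion to be long or high biases the driving $\mathrm{BESQ}_0$ by its terminal value, which is exactly how $\mathrm{BESQ}_4$ arises. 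The paper's approach, on the other hand, is not tied to the Laplace increments and would apply to any symmetric continuous walk, at the cost of appealing to substantial outside machinery.

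Your identification of the ``main obstacle'' is accurate: the uniform bound $\Phi_k(y)\le C\min(1,y/\sqrt{k})$ in the long-excursion case is the one genuinely technical point, and your sketch via the generating function $\sum_k\Phi_k(y)z^{k-1}=(1-e^{-\frac{y}{2}\sqrt{1-z}})/(1-z)\le \tfrac{y}{2}(1-z)^{-1/2}$ together with monotonicity of $\Phi_k$ does the job (take $z=1-1/k$). Everything else is formal given the Cox-process and $\mathrm{BESQ}$ Laplace-transform inputs already in the paper.
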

\begin{proof}
By construction, $W_k$ is the $k^{th}$ order statistics of $\{-S^{\downarrow}_n, \, n \ge 0\} \cup \{S^{\uparrow}_n, \, n \ge 1\}$,
where $(S^{\downarrow}_n, \, n \ge 0)$ and  $(S^{\uparrow}_n, \, n \ge 1)$ are two Feller chains generated from the walk $S$.
Since the increments of the walk $S$ are i.i.d. with continuous density, 
$(-S^{\downarrow}_n, \, n \ge 0)$ and  $(S^{\uparrow}_n, \, n \ge 1)$ are independent and identically distributed as the Doob-$h$ transform of the walk $S$ for some harmonic function $h$.
It is well known \cite{GS79} that for a long walk excursion or a high walk excursion, 
the contributions to the counting process $(N_{\tiny \mbox{des}}(v), \, v \ge 0)$ come from two ends of the excursion.
Since the increments of the walk $S$ are i.i.d. with continuous density, the beginning and the end of this excursion are asymptotically independent and identically distributed as the Doob-$h$ transform of the walk $S$ with the harmonic function $h$, see \cite{BD94}.
In particular, for each $v \ge 0$,
$\mathbb{E}(z^{N_{\tiny \mbox{des}}(v)}\,|\, A_m) \longrightarrow (\mathbb{E}z^{N^{\uparrow}(v)})^2$
as $m \to \infty$,
which is equal to $\mathbb{E}z^{N_W(v)}$ by Theorem \ref{thm:Poissoncluster} (1)(2).
\end{proof}

\quad To conclude this section, we prove Theorem \ref{thm:main}.
\begin{proof}[Proof of Theorem \ref{thm:main}]
(1) According to Corollary \ref{coro:identify2} with $A_m = \{\tau^- > m\}$, 
we identify the distribution of $(N_W(v), \, v \ge 0)$ with the limiting distribution of $(N_{\tiny \mbox{des}}(v), \, v \ge 0 \,|\, \nu \ge m)$ as $m \to \infty$.
Now by Lemma \ref{lem:idDelta}, we get
\begin{equation*}
W_k \stackrel{d}{=} \sum_{j=1}^k \frac{\varepsilon_j}{2 Y^{\uparrow}_j} \quad \mbox{for } k \ge 1,
\end{equation*}
where $(Y^{\uparrow}_j, \, j \ge 1)$ with $Y^{\uparrow}_1 = 1$ is the Markov chain with transition probabilities \eqref{eq:39},
and $(\varepsilon_j, \, j \ge 1)$ is a sequence of i.i.d. exponential variables independent of $Y^{\uparrow}$.
We conclude by noting that $(2 Y_j, \, j \ge 1) \stackrel{d}{=} (S^{\pm, 0}_{2j}, j \ge 1)$ where $S^{\pm,0}$ is a simple symmetric random walk on the nonnegative integers with absorption at $0$, and thus $(2 Y^{\uparrow}_j, \, j \ge 1) \stackrel{d}{=} (S^{\pm \uparrow}_{2j}, j \ge 1)$ as the Doob-$h$ transforms.
The tail distribution \eqref{eq:tailDj} follows from the representation $D_j := W_{j} - W_{j-1} = \varepsilon_j/S^{\pm \uparrow}_{2j}$.

\quad (2) The statements for $N_W$ follows directly from Theorem \ref{thm:Poissoncluster}.
By the formula \eqref{eq:215}, the tail generating function for $D_j$'s is 
\begin{equation*}
\sum_{j = 1}^\infty \mathbb{P}(D_j > v)z^{j-1} = \frac{1}{1-z} \mathbb{E}\left[-\frac{1}{2} \int_0^v Q_4(X, u) du \right],
\end{equation*}
where $X$ is the time of the first point in a Cox process driven by $\left(\frac{1-z}{2}Q_4(0,v), \, v \ge 0\right)$.
A similar argument as for the first identity in \eqref{eq:67} shows that $X \stackrel{d}{=} \frac{2}{\sqrt{1-z}} \gamma_2$.
Consequently,
\begin{align*}
\sum_{j = 1}^\infty \mathbb{P}(D_j > v)z^{j-1} &= \frac{1}{1-z} \mathbb{E}\left[-\frac{1}{2} \int_0^v Q_4\left(\frac{2}{\sqrt{1-z}}\gamma_2, u\right) du \right] \\
& =  \frac{1}{1-z} \int_{x > 0} \mathbb{E}\left[-\frac{1}{2} \int_0^v Q_4(x,u) du \right] \mathbb{P}\left( \frac{2}{\sqrt{1-z}}\gamma_2 \in dx\right)\\
& = \frac{1}{1-z} \int_0^{\infty} (\cosh v)^{-2} \exp \left(-\frac{x}{2} \tanh v\right) \cdot \frac{1-z}{4} x \exp\left(-\frac{x}{2}\sqrt{1-z} \right) dx \\
& = (\cosh v)^{-2} (\sqrt{1-z} + \tanh v)^{-2},
\end{align*}
where the third equality follows from the Laplace transform \eqref{eq:jointLaplace}.
This yields \eqref{eq:tailPGFD}.
\end{proof}

\section{Brownian embedding of the Laplace walk and path decomposition}
\label{sc7}

\quad In this final section, we study the path decomposition of the symmetric Laplace walk embedded in Brownian motion, which sheds light on previous constructions of $(W_k, \, k \ge 1)$ as the sequence of limits in law of $(M_{k,n} - M_{k,0}, \, k \ge 1)$ for the walk $S$.
It is well known \cite{Sk65} that every random walk with mean zero and finite variance $\sigma^2$ per step may be 
embedded in a Brownian motion $(B_t, \, t \ge 0 )$ as $S_k:= B_{T_k}$, 
where $0 = T_0 \le T_1 \le \cdots$ is an increasing sequence of stopping times of $B$, with the $(T_k- T_{k-1}, \, k \ge 1)$ i.i.d. as $T_1$, and $\mathbb{E}T_1 = \sigma^2$. 
Three important examples are
\begin{itemize}[itemsep = 3 pt]
\item[$(1)$]
simple symmetric walk, with $T_k = \inf\{t > T_{k-1}: |B_t - B_{T_{k-1}}| = 1\}$; 
\item[$(2)$]
symmetric Gaussian random walk, with $T_k = k$;
\item[$(3)$]
symmetric Laplacian random walk, with $T_k = 2 \gamma_k$, where $\gamma_k = \sum_{i = 1}^{k} \varepsilon_i$ for 
$(\varepsilon_k, \, k \ge 1)$ a sequence of i.i.d. standard exponential variables.
\end{itemize}
In our previous work \cite{PT20}, the Brownian embeddings of (1) the simple symmetric walk, and (2) the Gaussian walk 
have been thoroughly studied. 

\quad In the sequel, it is assumed that the probability space on which the random walk $(S_n, \, n \ge 0)$ is defined is rich enough to allow for an embedding in a Brownian motion $(B_t, \, t \ge 0)$ defined on the same probability space.
There is one more variable to be entered into the mix:
\begin{equation}
M_{-,n}:= \min_{0 \le t \le T_n} B_t.
\end{equation}
So by definition, $M_{-,n} \le M_{0,n} \le \cdots \le M_{n,n}$ are ranked values of Brownian motion on $[0,T_n]$ evaluated at a grid of $n+2$ random times, the times $0= T_0 \le T_1 \le \cdots \le T_n$ at which the random walk $S_k:=B_{T_k}$ is
embedded, and one extra random time $\tau[0,T_n]$, the almost surely unique random time at which $B$ attains its minimum on $[0, T_n]$.
It is a key observation that the differences of the order statistics $M_{k,n} - M_{k,0}$ can be expressed in terms of 
the ranked heights $M_{k,n} - M_{-,n}$ of the random walk sample points above the minimum of $B$ on $[0,T_n]$:
\begin{equation}
M_{k,n} - M_{0,n} = (M_{k,n} - M_{-,n}) - (M_{0,n} - M_{-,n}).
\end{equation}

\quad The distribution of $M_{0,n} - M_{-,n}$ was studied in \cite{AGP95} for a Gaussian random walk, 
where $M_{0,n} - M_{-,n}$ was interpreted as the discretization error in the Euler scheme to approximate a reflected Brownian motion.
The main idea was that when the Gaussian walk $S$ is embedded in Brownian motion with $S_k = B_{T_k}$ for $T_k = k$,
the discretization error $M_{0,n} - M_{-,n}$ as $n \to \infty$ is determined by the behavior of Brownian motion around its minimum time, which may be described as two independent copies of a $\mbox{BES}_3(0)$ process, joined back to back.
Extending this argument, it was shown in \cite{PT20} that for the symmetric Gaussian walk, 
there is the convergence in joint distributions for each finite $K$,
\begin{equation*}
(M_{k,n} - M_{-,n}, \, 0 \le k \le K) \stackrel{d}{\longrightarrow} (M_{k,\infty}, \, 0 \le k \le K),
\end{equation*}
where $(M_{k,\infty}, \, k \ge 0)$ is the point process of the order statistics of $(\check{R}_{U+z}, \, z \in \mathbb{Z})$, 
with $R'$ a two-sided $\mbox{BES}_3(0)$ process, and $U$ uniform on $[0,1]$ independent of $\check{R}$.
However, it is a difficult problem to describe even the law of $M_{0, \infty}$ at all explicitly.

\quad It has been known for a long time that computations in the fluctuation theory of random walks are often surprisingly difficult in the Gaussian case, while surprisingly easy in the exponential case. 
The good feature of exponentially distributed increments is that the memoryless property of the exponential distribution implies that all overshoots of levels are exponentially distributed. 
The simple structure of the path decomposition at the minimum of Brownian motion up to an independent exponential time, first emphasized by Williams \cite{Williams70, Williams}, has enabled diverse developments of the rich probabilistic structure of Brownian motion sampled at random times generated by an independent homogeneous Poisson process,
see e.g. \cite[Sections 7.7--7.8]{Pitman06}.
In particular, as observed in \cite[p. 892]{AGP95}, this structure provides a simple algorithm for efficient sampling the values of a reflected Brownian motion at the times generated by an independent homogeneous Poisson process.
The following analog of \cite[Theorem 5.1]{PT20} for Poisson sampling is also easily established by the method of \cite{AGP95}.

\begin{theorem}
\label{thm:pathdecomp}
Let $(S_k, \, k \ge 0)$ be a random walk with i.i.d. symmetric Laplace increments
embedded in Brownian motion $(B_t, \, t \ge 0)$ as $S_k:=B_{T_k}$ with $T_k = 2 \gamma_k$ for $\gamma_k = \sum_{i = 1}^k \varepsilon_i$ for $\varepsilon_1, \varepsilon_2, \ldots$ a sequence of i.i.d. standard exponential variables independent of $B$.
Let $(M_{k,n}, \, 0 \le k \le n)$ be the sequence of order statistics of the $n$-step walk $(S_k, \, 0 \le k \le n)$,
and $M_{-,n}:=\min_{0 \le t \le T_n} B_t$.
Then for each finite $K$, there is the convergence of joint distributions
\begin{equation}
(M_{k,n} - M_{-,n}, \, 0 \le k \le K) \stackrel{d}{\longrightarrow} (M_{k,\infty}, \, 0 \le k \le K),
\end{equation}
where $(M_{k, \infty}, \, k \ge 0)$ is the sequence of order statistics of values of $(R_3(2\gamma_k), \, k = 1,2,\ldots)$
and $(\widehat{R}_3(2 \widehat{\gamma}_k), \, k = 1,2,\ldots)$, 
with two independent $\mbox{BES}_3(0)$ process $R_3$ and $\widehat{R}_3$ assumed to be independent of 
$(\gamma_k, \, k \ge 1)$ and $(\widehat{\gamma}_k, \, k \ge 1)$, the points of two independent Poisson process with rate $1$.
\end{theorem}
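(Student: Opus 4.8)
\textit{Proof proposal.} The plan is to follow the method of \cite{AGP95}, adapted to order statistics as in \cite[Theorem 5.1]{PT20}: re-root the embedding Brownian motion at its running minimum and pass to a limit in which the re-rooted path splits, by Williams' decomposition at the minimum, into two independent $\mbox{BES}_3(0)$ processes. Write $\tau_n$ for the a.s.\ unique time at which $B$ attains its minimum on $[0,T_n]$, so that $M_{-,n}=B_{\tau_n}$, and set
\begin{equation*}
R^{(n)}(s):=B_{\tau_n+s}-M_{-,n},\ \ 0\le s\le T_n-\tau_n, \qquad \widehat{R}^{(n)}(s):=B_{\tau_n-s}-M_{-,n},\ \ 0\le s\le \tau_n.
\end{equation*}
Since $S_j-M_{-,n}$ equals $R^{(n)}(T_j-\tau_n)$ when $T_j\ge\tau_n$ and $\widehat{R}^{(n)}(\tau_n-T_j)$ when $T_j<\tau_n$, the set $\{M_{k,n}-M_{-,n},\,0\le k\le n\}$ is precisely the set of values of the two fragments $R^{(n)},\widehat{R}^{(n)}$ sampled at the time points $\{T_j-\tau_n:T_j\ge\tau_n\}$ and $\{\tau_n-T_j:T_j<\tau_n\}$ respectively, and the bottom $K$ of these sampled values give $M_{0,n}-M_{-,n}\le\cdots\le M_{K,n}-M_{-,n}$.

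Two limit inputs drive the argument. First, Williams' path decomposition of Brownian motion at its minimum \cite{Williams70,Williams}, in the regime of interval length tending to infinity --- the ingredient exploited in \cite{AGP95,PT20} --- gives that $\tau_n\to\infty$ and $T_n-\tau_n\to\infty$ a.s., and that for every fixed $L$,
\begin{equation*}
\bigl((R^{(n)}(s))_{0\le s\le L},\,(\widehat{R}^{(n)}(s))_{0\le s\le L}\bigr)\ \stackrel{d}{\longrightarrow}\ \bigl((R_3(s))_{0\le s\le L},\,(\widehat{R}_3(s))_{0\le s\le L}\bigr),
\end{equation*}
with $R_3,\widehat{R}_3$ two independent $\mbox{BES}_3(0)$ processes. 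Second, conditioning on $\gamma_n$, the sample times $2\gamma_1<\cdots<2\gamma_{n-1}$ are the order statistics of $n-1$ points drawn uniformly on $(0,2\gamma_n)$, independent of $B$ and hence of $(\tau_n,R^{(n)},\widehat{R}^{(n)})$; since $\gamma_n/n\to1$ these points have local intensity tending to $\tfrac12$ and are independent of $\tau_n$, so jointly the two point processes $\{T_j-\tau_n:T_j\ge\tau_n\}$ and $\{\tau_n-T_j:T_j<\tau_n\}$ converge in law to two independent rate-$\tfrac12$ Poisson processes on $(0,\infty)$ that are independent of $(R_3,\widehat{R}_3)$. A rate-$\tfrac12$ Poisson process on $(0,\infty)$ can be written as $\{2\gamma_k:k\ge1\}$ for a unit-rate Poisson process $\gamma$, which puts these limits into the form of the theorem (the two unit-rate processes being those denoted $\gamma$ and $\widehat{\gamma}$ there). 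Finally, the two endpoint sample times $T_0=0$ and $T_n=2\gamma_n$ lie at distances $\tau_n\to\infty$ and $T_n-\tau_n\to\infty$ from $\tau_n$, where the re-rooted fragments are large, so they --- and, for any fixed $K'$, the $K'$ largest sampled heights --- drop out of the bottom-$K$ order statistics in the limit.

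The proof is finished by a continuous-mapping argument. A localization estimate, uniform in $n$, shows that with probability tending to $1$ the bottom $K$ sampled values are attained at times in a compact window $[\tau_n-L,\tau_n+L]$: outside such a window both re-rooted fragments exceed any prescribed threshold with probability close to $1$, uniformly in $n$, by the transience of $\mbox{BES}_3$. On this window, the joint convergence of (re-rooted fragments, relabelled sample points) from the previous paragraph, together with the a.s.\ absence of ties under the limiting law, makes the map ``bottom-$K$ order statistics of the sampled values'' a.s.\ continuous, giving
\begin{equation*}
(M_{k,n}-M_{-,n},\,0\le k\le K)\ \stackrel{d}{\longrightarrow}\ (M_{k,\infty},\,0\le k\le K),
\end{equation*}
where $(M_{k,\infty},\,0\le k\le K)$ is the bottom-$K$ part of the order statistics of $\{R_3(2\gamma_k):k\ge1\}\cup\{\widehat{R}_3(2\widehat{\gamma}_k):k\ge1\}$, exactly as stated.

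The step I expect to be the main obstacle is the uniform-in-$n$ localization estimate together with the rigorous joint convergence of the Brownian fragments and the sampling point processes; this is the technical heart of the argument and is handled exactly as in \cite{AGP95,PT20}. The only genuinely new feature here is that Poisson sampling replaces the deterministic lattice grid of those references, which if anything simplifies matters: the relabelled sampling points are automatically Poisson and independent of $B$, so no equidistribution-of-the-fractional-part lemma is needed, and the uniform randomization $U$ present in the Gaussian analogue \cite[Theorem 5.1]{PT20} disappears.
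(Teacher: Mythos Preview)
Your proposal is correct and follows exactly the approach indicated by the paper, which does not give a detailed proof but simply states that the theorem ``is also easily established by the method of \cite{AGP95}'' as an analog of \cite[Theorem~5.1]{PT20}. You have faithfully filled in that outline: re-root at the minimum, invoke Williams' decomposition to obtain the two independent $\mathrm{BES}_3(0)$ limbs, and observe---as you correctly note---that Poisson sampling actually \emph{simplifies} the argument relative to the Gaussian case, since conditioning on $\gamma_n$ makes the interior sample times uniform and independent of $(\tau_n,R^{(n)},\widehat R^{(n)})$, eliminating the fractional-part randomization $U$ that appears in the lattice-grid analogue.
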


\quad Since the $\mbox{BES}_3(0)$ process $R$ is transient, its ultimate local time process $(L_R(v, \infty), \, v \ge 0)$ is almost surely finite for each $v \ge 0$.
Theorem \ref{thm:pathdecomp} then applies to show that the ultimate point process of levels above the minimum derived from a symmetric Laplace walk is a Cox process, whose intensity is described by the Ray-Knight-Williams description of Brownian local times, as indicated in the following corollary.
This result also gives a simplified proof of Theorem \ref{thm:Poissoncluster} (3), or equivalently the first identity in law in \eqref{eq:67}.

\begin{corollary}
\label{coro:last}
Under the setting of Theorem \ref{thm:pathdecomp}, the limiting point process of levels above the minimum with ordered points $(M_{k,\infty}, \, k \ge 0)$ is a Cox process on the positive half line driven by
\begin{equation}
\frac{1}{2} Q_4(0, v): = \frac{1}{2}\left(L_R(v, \infty) + L_{\widehat{R}}(v, \infty)\right) \quad v \ge 0,
\end{equation}
where $Q_4(0, \cdot)$ is a $\mbox{BESQ}_4$ process starting at $0$.
The law of this point process is infinitely divisible.
In particular, the decomposition of the limit levels above the minimum provides a decomposition of limit levels into 
the sum of two independent copies of the Cox process driven by 
\begin{equation}
\frac{1}{2}Q_2(0,v):= \frac{1}{2}L_R(v, \infty) \quad v \ge 0,
\end{equation}
where $Q_2(0, \cdot)$ is a $\mbox{BESQ}_2$ process starting at $0$.
\end{corollary}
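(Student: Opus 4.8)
The plan is to deduce the corollary from Theorem~\ref{thm:pathdecomp} by the same Poisson mapping argument that was used in the proof of Theorem~\ref{thm:SBdescription}(2), followed by the additivity of squared Bessel processes. By Theorem~\ref{thm:pathdecomp}, the limit process $(M_{k,\infty},\,k\ge0)$ is the superposition of two independent point processes: one formed by the values $R(2\gamma_k)$, $k\ge1$, of a $\mbox{BES}_3(0)$ process $R$ sampled at the points $2\gamma_1<2\gamma_2<\cdots$ of a Poisson process of rate $\tfrac12$ on $(0,\infty)$, and an independent copy built from $\widehat R$ and the $\widehat\gamma_k$. It therefore suffices to analyse one such process and then add the two.

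First I would condition on the path of $R$ and apply the mapping theorem for Poisson processes \cite{Kingman93}: conditionally on $R$, the points $2\gamma_k$ form a Poisson process with intensity $\tfrac12\,dt$ on $(0,\infty)$, whose image under the continuous map $t\mapsto R(t)$ is, by the occupation-density definition of local time, the measure $\tfrac12 L_R(v,\infty)\,dv$ on $(0,\infty)$; this is finite for each $v$ since $\mbox{BES}_3(0)$ is transient. Hence $\{R(2\gamma_k)\}$ is a Cox process driven by $\bigl(\tfrac12 L_R(v,\infty),\,v\ge0\bigr)$. Now comes the key analytic input: the ultimate local time process of a $\mbox{BES}_3(0)$ process is a $\mbox{BESQ}_2(0)$ process, $(L_R(v,\infty),\,v\ge0)\stackrel{d}{=}(Q_2(0,v),\,v\ge0)$ — this is McKean's description of $\mbox{BES}_3$ together with the Ray--Knight--Williams description of Brownian local time near a minimum \cite{McKean62, Williams70, RY99}, and is the step I expect to require the most care, in particular getting the occupation-density normalization right so that the constant is exactly $\tfrac12$. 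Consequently the single-sided process is a Cox process driven by $\tfrac12 Q_2(0,\cdot)$, which gives the second assertion of the corollary once one observes that this is the process arising from one of the two $\mbox{BES}_3(0)$ halves of Brownian motion at its minimum.

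Next I would superpose. A superposition of two independent Cox processes is a Cox process whose driving intensity is the sum, so $(M_{k,\infty})$ is a Cox process driven by $\tfrac12\bigl(L_R(v,\infty)+L_{\widehat R}(v,\infty)\bigr)$; since $R$ and $\widehat R$ are independent, the additivity property \eqref{eq:additiveSB} gives $L_R(\cdot,\infty)+L_{\widehat R}(\cdot,\infty)\stackrel{d}{=}Q_2(0,\cdot)+\widehat Q_2(0,\cdot)\stackrel{d}{=}Q_4(0,\cdot)$, which is the claimed driving intensity $\tfrac12 Q_4(0,v)$ and justifies the notation $Q_4(0,v):=L_R(v,\infty)+L_{\widehat R}(v,\infty)$. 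Infinite divisibility is then immediate: by the Shiga--Watanabe extension of \eqref{eq:additiveSB} to all real dimensions, $Q_4(0,\cdot)\stackrel{d}{=}\sum_{i=1}^n Q_{4/n}(0,\cdot)$ with independent summands for every $n\ge1$, so the Cox process splits into $n$ i.i.d. Cox processes driven by $\tfrac12 Q_{4/n}(0,\cdot)$; the case $n=2$ is exactly the asserted decomposition into two independent copies of the Cox process driven by $\tfrac12 Q_2(0,v)$, which is also just the two-term superposition read backwards.

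Finally, to recover Theorem~\ref{thm:Poissoncluster}(3): writing $W_k=M_{k,\infty}-M_{0,\infty}$ and letting $0<T_1<T_2<\cdots$ be the points of the Cox process driven by $\tfrac12 Q_4(0,v)$, so that $M_{0,\infty}=T_1$ and $W_k=T_{k+1}-T_1$, the strong Markov property of $\mbox{BESQ}_4$ at $T_1$ shows that $(W_k)$ is a Cox process driven by $\tfrac12 Q_4\bigl(Q_4(0,T_1),\cdot\bigr)$. Comparing with the independent identification $N_W\stackrel{d}{=}N_2$, a Cox process driven by $\tfrac12 Q_4(2\gamma_2,\cdot)$ from Theorem~\ref{thm:Poissoncluster}(1)--(2), and using the one-to-one correspondence between a Cox process and its intensity (Section~\ref{sc23}) together with $2\gamma_2\stackrel{d}{=}Q_4(0,1)$, forces $Q_4(0,T_1)\stackrel{d}{=}2\gamma_2$, which is the first identity in \eqref{eq:67}; conversely that identity yields the construction of $(W_k)$ as differences in Theorem~\ref{thm:Poissoncluster}(3).
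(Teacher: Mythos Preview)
Your proposal is correct and follows essentially the same approach the paper indicates in the paragraph preceding the corollary (the paper gives no further proof): the Poisson mapping argument of Theorem~\ref{thm:SBdescription}(2) applied to each $\mbox{BES}_3(0)$ half, the Ray--Knight--Williams identification $L_R(\cdot,\infty)\stackrel{d}{=}Q_2(0,\cdot)$, and additivity \eqref{eq:additiveSB} of $\mbox{BESQ}$. Your final paragraph also correctly spells out how the corollary yields the simplified proof of Theorem~\ref{thm:Poissoncluster}(3) via uniqueness of the Cox intensity, which the paper only asserts.
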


\medskip
{\bf Acknowledgment}: We thank G. Schehr for stimulating discussions at the early stage of this work.
Tang gratefully acknowledges financial support through an NSF grant DMS-2113779 and through a start-up grant at Columbia University.

\bibliographystyle{abbrv}
\bibliography{unique}
\end{document}